\tikzset{
    cross/.pic = {
    \draw[rotate = 30] (-#1,0) -- (#1,0);
    \draw[rotate = 60] (0,-#1) -- (0, #1);
    }
}
\newtheorem{thm}{Theorem}
\newtheorem{rem}[thm]{Remark}
\newtheorem{lem}[thm]{Lemma}
\newtheorem{cor}[thm]{Corollary}
\newtheorem{q}{Question}
\newtheorem{example}[thm]{Example}
\newcommand{\Z}{\mathbb Z}
\newcommand{\N}{\mathbb N}
\newcommand{\R}{\mathbb R}
\newcommand{\bt}{{\bm{t}}}
\newcommand{\bs}{\bm{s}}
\newcommand{\bv}{\textbf{v}}
\newcommand{\bu}{\textbf{u}}
\newcommand{\tv}{\text{v}}
\newcommand{\bn}{\bm{n}}
\newcommand{\bgamma}{{\bm{\gamma}}}
\DeclareMathOperator{\fract}{frac}
\DeclareMathOperator{\Cl}{Cl}
\newcommand{\AQchange}[1]{{#1}}
\newcommand{\AQdelete}[1]{}
\title{Flexibility of the Pressure Function}
\author{Tamara Kucherenko}\address{Department of Mathematics,
The City College of New York, New York, NY, 10031, USA}\email{tkucherenko@ccny.cuny.edu}
\author{Anthony Quas}\address{ Department of Mathematics and Statistics, University of Victoria, Victoria, BC
Canada}\email{aquas@uvic.ca}
\thanks{T.K. is supported by grants from the Simons Foundation \#430032 and from the PSC-CUNY TRADA-48-19. }
\thanks{A.Q. is supported by a grant from NSERC}
\begin{document}

\begin{abstract}
We study the flexibility of the pressure function of a continuous potential (observable) with respect
to a parameter regarded as the inverse temperature. The points of non-differentiability of this function
are of particular interest in statistical physics, since they correspond to phase transitions. It is well known
that the pressure function is convex, Lipschitz, and has an asymptote at infinity. We prove that in a
setting of one-dimensional compact symbolic systems these are the only restrictions. We present a
method to explicitly construct a continuous potential whose pressure function coincides with
\emph{any} prescribed convex Lipschitz asymptotically linear function starting at a given positive value
of the parameter. In fact, we establish a multidimensional version of this result. As a consequence,
we obtain that for a continuous observable the phase transitions can occur at a countable dense set of
temperature values. We go further and show that one can vary the cardinality of the set of ergodic
equilibrium states as a function of the parameter to be any number, finite or infinite.

\end{abstract}

\keywords{equilibrium states, phase transitions, thermodynamic formalism,
topological pressure, variational principle}
\subjclass[2000]{}
\maketitle

\section{Overview}

Katok launched the flexibility program which he described as follows:
``under properly understood general restrictions, within a fixed class of smooth dynamical systems dynamical invariants take arbitrary values”.
Hence, the flexibility program is geared towards an understanding of the most general constraints
which define a common class of dynamical systems and the building of tools to readily change all
other dynamical specifications within those constraints. This is a novel direction in dynamics which
has been explicitly stated in \cite{EK,BKHR}. At the same time however, the core problems are clear
and accessible to a rather broad community of mathematicians working within the area and this
has made the program develop at a rapid pace. Although Katok originally formulated the program
for smooth dynamical systems, his perception is highly relevant for general topological dynamical
systems on compact spaces. In this note we apply it to the topological pressure functional in the
class of compact symbolic systems.

Within the last few years there has been a great deal of activity around
Katok's ideas of flexibility. We briefly describe some of these works. For conservative Anosov flows
on three-dimensional manifolds the basic flexibility problem involves realization of arbitrary pairs of
numbers as values for topological and metric entropy subject only to the variational inequality.
In \cite{EK} the authors consider smooth closed Riemannian surfaces of negative curvature and
show that all the possible values for the topological and metric (with respect to the Liouville measure)
entropies of the geodesic flow are realized within this class. In situations where the relevant invariant
measure varies with the dynamics, one may be interested in the values of Lyapunov exponents.
The flexibility of Lyapunov exponents was proven for expanding maps on a circle \cite{E1} and for
Anosov area-preserving diffeomorphisms on tori \cite{E2}. Subsequently, the fundamental paper
\cite{BKHR} outlines the program and provides flexibility results for volume-preserving systems
with respect to the volume measure. These results have already been improved and extended in
\cite{CS}.

There are also applications of the flexibility paradigm in settings other than smooth flows on manifolds.
The class of piecewise expanding unimodal maps is considered in \cite{AMP}. The authors show that
the only restrictions for the values of the topological and metric entropies in this class are that both are
positive and the topological entropy is at most log 2. In \cite{AKU} some maps arising in the study of
Fuchsian groups are analyzed  and it is proven that all possible values of the entropy are attained.
Lastly, flexibility results are established for the values of polynomial slow entropy for rigid
transformations \cite{BKW} and homeomorphisms on a continuum \cite{RRS}.

The path to obtaining the full range of allowable parameters opened by Katok and then followed by
others consists of starting from a map whose dynamics is well understood and studying what happens
under perturbations. The main challenge of this approach is that the values of dynamical invariants
can be precisely calculated in only a handful of cases. Moreover, there are not many methods available
to perturb a system in a controlled manner.

Establishing flexibility calls for versatile constructions in large families to cover all possible values
of dynamical quantities. This is precisely the route we take to gain total control over one of the
most important objects in thermodynamic formalism. Our work asserts flexibility of a whole pressure
function, rather than of finite number of values for Lyapunov exponents or topological and metric entropies.
In contrast to the perturbation methods described above, we build a dynamical system with the desired
properties from the ground up.  We remark that the pressure function can, in turn, be applied to obtain
information about Lyapunov exponents, dimension, multifractal spectra, or natural invariant measures.
We refer to  \cite{BG,P,PU, Ru1} for details and further references.

Our setting is one-dimensional compact symbolic systems. One of the many reasons that symbolic
systems are important is that they serve as proxies for smooth systems. In many occasions it is more
straightforward to identify properties of symbolic systems, which then can be transferred to smooth systems.
We investigate the possible behavior of the topological pressure restricted to a linear span of a fixed finite
set of continuous potentials. The pressure is then viewed as a function of the coefficients in the linear
combinations of the potentials. Such multivariable pressure functions play a fundamental role in
multifractal analysis, which studies level sets of asymptotically defined quantities such as Birkhoff
averages and local entropies. A nice overview of the theory can be found in \cite{BPS}. The pressure
function is used as the main tool to compute the dimension spectra of the simultaneous level sets,
see e.g. \cite{BSS} and \cite{C}.

To be precise, let $\phi:X\to\R$ be a continuous potential associated with a compact symbolic dynamical system
$(X,\sigma)$. The \emph{topological pressure} of $\phi$ can be defined via the
Variational Principle by
\begin{equation}\label{eq:VarPr}
  P_{\rm top}(\phi)=\sup\left\{h(\mu)+\int\phi\,d\mu \right\}
\end{equation}
where the supremum is taken over the set of all $\sigma$-invariant probability measures on $X$ and $h(\mu)$
denotes the measure-theoretic entropy of the measure $\mu$. The measures which realize the above supremum
are called the \emph{equilibrium states} of $\phi$. Classical manuscripts about the pressure and equilibrium states
are \cite{Bo, Ru1, Walters}.

Fix $m$ continuous potentials $\phi_1,...,\phi_m$. For $(t_1,...t_m)\in\R^m$ the \emph{multivariable pressure function} is the map
$$(t_1,...,t_m)\mapsto P_{\rm top}(t_1\phi_1+...+t_m\phi_m).$$
We now describe a few basic properties of this map. It is an immediate consequence of the Variational Principle
that the pressure function is Lipschitz and convex. The defining characteristic of a convex function on $\R^m$ is
that it has a supporting hyperplane at each point of its graph. \AQchange{It follows from the description of the
equilibrium states as tangent functionals to the pressure given by Walters \cite{W1} that each such hyperplane
arises as the graph of a function $(t_1,\ldots,t_m)\mapsto h(\mu)+\int(t_1\phi_1+\ldots
+t_m\phi_m)\,d\mu$ for an equilibrium state $\mu$. The \emph{vertical intercept} (i.e.~the value of the function
evaluated at $(0,\ldots,0)$) of such a hyperplane is $h(\mu)$.} Note that if the potential $t_1\phi_1+...+t_m\phi_m$ has multiple equilibrium measures, they may correspond to different hyperplanes, all passing through the point $(t_1,\ldots,t_m,P_{\rm top}(t_1\phi_1+...+t_m\phi_m))$. Lemma \ref{lem:ConvAnal} below states that a convex function of the type that we are considering may be recovered as the supremum of its tangent functionals.
\AQdelete{Specifically, a supporting hyperplane
to the pressure function at point $(t_1,...,t_m)$ intersects the vertical axis at $h(\mu)$, where $\mu$ is an
equilibrium state for the potential $t_1\phi_1+...+t_m\phi_m$.} The entropies of all invariant probability measures
are bounded above by the topological entropy of the system $(X,\sigma)$. Hence, if a real valued function of $m$
variables is a pressure function then it is convex, Lip\-schitz, and the vertical intercepts of its supporting
hyperplanes form a bounded set of nonnegative numbers. We prove that these conditions are
\emph{necessary and sufficient}. Our main result is the following theorem.

\begin{thm}\label{thm:main}
Let $\alpha>0$ and let $F(t_1,\ldots,t_m)$ be a convex Lipschitz function on $(\alpha,\infty)^m$ such
that all the supporting hyperplanes to the graph of $F$ intersect the vertical axis in a closed interval $[b,c]\subset [0,\infty)$.
Then there exists a full shift on a finite alphabet and continuous potentials $\phi_1,\ldots,\phi_m$ such that
$P_{\rm top}(t_1\phi_1+\ldots+t_m\phi_m)=F(t_1,\ldots,t_m)$ for all $(t_1,\ldots,t_m)\in (\alpha,\infty)^m$.
\end{thm}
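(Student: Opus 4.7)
The plan is to reverse-engineer $F$ through the variational principle, using Fenchel duality to translate the analytic data of $F$ into the dynamical data (entropies and integrals) that must be realized. Since $F$ is convex and Lipschitz, every supporting hyperplane has slope in a bounded set $K\subset\R^m$, and by hypothesis its vertical intercept lies in $[b,c]$. Recording the intercept as a concave function $L:K\to[b,c]$ of the slope $\bv$, one obtains
\[
F(t_1,\ldots,t_m)=\sup_{\bv\in K}\bigl(L(\bv)+v_1t_1+\cdots+v_mt_m\bigr).
\]
In view of the variational principle, the theorem reduces to constructing a full shift $(X,\sigma)$, continuous potentials $\phi_1,\ldots,\phi_m$, and an ergodic measure $\mu_{\bv}$ for each $\bv$ in a dense subset of $K$, satisfying $h(\mu_{\bv})=L(\bv)$ and $\int\phi_i\,d\mu_{\bv}=v_i$, while arranging matters so that no other invariant measure can exceed the desired value of the pressure when $\bt\in(\alpha,\infty)^m$.

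First I would fix an alphabet of size $N$ with $\log N>c$, so that measures of every required entropy exist inside the shift. I would choose a countable dense sequence $(\bv_k)_{k\ge 1}$ in $K$, and partition the alphabet into small disjoint blocks, reserving one block $A_k$ for each index $k$, together with one or two ``marker'' symbols. For each $k$, I would construct a subsystem $Y_k\subset X$ consisting of concatenations of long template blocks drawn from $A_k$ separated by markers. Inside $Y_k$ I can place an ergodic $\sigma$-invariant measure $\mu_k$ of entropy exactly $L(\bv_k)$ by taking, for example, a Bernoulli measure on a subalphabet of $A_k$ of the appropriate size scaled to the target entropy. The markers make the index $k$ readable from any sufficiently long word of a point in $Y_k$, so the $Y_k$ are combinatorially disjoint in a quantitative way.

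Next, I would define the potentials as rapidly convergent series $\phi_i=\sum_{k\ge 1}\psi_{i,k}$, where each $\psi_{i,k}$ is a cylinder function with support essentially in $Y_k$. The cylinder depths of the $\psi_{i,k}$ are taken to grow with $k$ quickly enough to yield uniform convergence, hence continuity of $\phi_i$. The coefficients of $\psi_{i,k}$ are tuned so that $\int\psi_{i,k}\,d\mu_k=v_{k,i}$ while the cross-integrals $\int\psi_{i,k}\,d\mu_j$ for $j\ne k$ are negligible; this is a linear moment-matching problem which the disjointness of the $Y_k$ makes solvable. With these choices one gets $h(\mu_k)+\sum_i t_i v_{k,i}=L(\bv_k)+\bv_k\cdot\bt$ for every $k$, so taking the supremum over the dense family yields the lower bound $P_{\rm top}(\sum_i t_i\phi_i)\ge F(\bt)$.

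The hard part is the matching upper bound, and it is where the whole construction must be balanced. For an arbitrary invariant measure $\nu$, I would decompose $\nu$ according to the asymptotic frequencies with which its orbits visit each $Y_k$, together with a residual ``diffuse'' component not captured by any $Y_k$. Affinity of entropy and of integrals under such a decomposition, combined with the combinatorial separation of the $Y_k$, reduces $h(\nu)+\sum_i t_i\int\phi_i\,d\nu$ to a convex combination of the values $L(\bv_k)+\bv_k\cdot\bt$, which by the Fenchel representation is at most $F(\bt)$. The residual component must be controlled separately: by arranging $\phi_i$ to vanish, or be sufficiently negative, off $\bigcup_k Y_k$, the residual contributes at most its entropy, which is damped by the hypothesis $\bt\in(\alpha,\infty)^m$ with $\alpha>0$. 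Carrying out this damping argument rigorously, and simultaneously respecting continuity of the $\phi_i$ and the prescribed matching $h(\mu_k)=L(\bv_k)$, is the principal technical obstacle.
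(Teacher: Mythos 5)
Your Fenchel-duality framing matches the paper's starting point (Lemma~\ref{lem:ConvAnal} in the paper is exactly the representation $F(\bt)=\sup_{(h,\bv)\in S}(h+\bv\cdot\bt)$), and your three-stage plan --- realize each slope/intercept pair with a disjointly supported ergodic measure, build potentials from cylinder pieces adapted to those supports, then bound all other measures away from the pressure --- is the right template. However there are two concrete problems with the way you propose to execute it.

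First, the alphabet bookkeeping does not close. You propose to partition a finite alphabet of size $N$ into disjoint blocks $A_k$, one for each member of a countable dense sequence $(\bv_k)$, with a Bernoulli measure of entropy $L(\bv_k)$ inside $A_k$. But a positive-entropy Bernoulli measure on $A_k$ needs $|A_k|\ge 2$, so a finite alphabet admits at most $N/2$ disjoint blocks, not infinitely many. You cannot simultaneously keep the alphabet finite, keep the $Y_k$ combinatorially disjoint, and cover a dense set of slopes. The paper resolves exactly this tension by letting all of its entropy-carrying subshifts (the $\beta$-shifts $X_{e^{\gamma_0}}$, $\gamma_0\in[b,c]$) share the single alphabet $\{0,\ldots,\lfloor e^c\rfloor\}$ --- they are nested, not disjoint --- and then restores disjointness by appending zero-entropy Sturmian factors $Y_{\gamma_0}\times\cdots\times Y_{\gamma_m}$, which label the intercept and the slope continuously over a fixed two-letter alphabet without contributing entropy. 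That device is not cosmetic: it is what makes a finite alphabet and a continuum of target values compatible.

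Second, the upper bound is where your plan is genuinely incomplete, and the step you describe does not actually hold. You propose to ``decompose $\nu$ according to the asymptotic frequencies with which its orbits visit each $Y_k$'' and then invoke ``affinity of entropy and of integrals.'' Entropy is affine for convex combinations of invariant measures, but an ergodic measure whose orbits shuttle between the $Y_k$'s is not a convex combination of measures supported on the individual $Y_k$'s; no such decomposition exists. Moreover, a measure that spends fractions of time looking like distinct $Y_k$'s can pick up extra entropy from the \emph{choice of switching times}, and a continuous potential cannot penalize this instantaneously --- the penalty must ramp up over a window whose length interacts with the entropy of the wandering excursions. Controlling this trade-off quantitatively is precisely the content of the paper's pinning-sequence construction: one lifts $\nu$ to a system $\Omega\subset A^\Z\times\{0,1\}^\Z$ that records a greedy decomposition of each orbit into maximal $\mathcal L(Z)$-words, induces on the pin set $P$, and then bounds the induced entropy (Lemma~\ref{lem:entropy_est}) and induced potential integral (Lemma~\ref{lem:int_est}) in terms of the return-time distribution $(q_j)$; the cooling rate $\delta_j\sim(\log j)/j$ is chosen exactly so that the $(3m+6)\sum q_j\log j$ entropy surplus is absorbed by the $-\alpha\sum jq_j\delta_j$ potential deficit. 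Your sketch identifies this as ``the principal technical obstacle'' and leaves it there; but without an argument of this precision the upper bound simply does not follow, and indeed it is the majority of the paper's proof.
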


Our proof is explicit and constructive. For an arbitrary function $F$ satisfying these
properties we build a set of $m$ continuous potentials whose pressure function coincides with $F$ on $(\alpha,\infty)^m$.  Theorem \ref{thm:main} falls in line with Katok's flexibility program.
Within the class of full shifts on finite alphabets we identify the general constraints on the pressure function and provide a tool to acquire any pressure function within those constraints.

We comment on the reasons behind our choice of the domain of the function $F$. Among symbolic systems our focus is the full shifts. Since the pressure of the constant zero potential equals the entropy of the system, the presence of the origin in the domain of $F$ would not only impose an additional restriction on $F$, i.e. $F(0)$ must be a logarithm of an integer $d\ge 2$, but also fix the dynamical system itself, i.e. $X$ must be the full shift on $d$ symbols. This leads to a very interesting related question of which functions on $\R^m$ can occur as pressure functions for a specific dynamical system, in particular the full shift on $d$ symbols. The task of identifying such functions might turn out to be very complex especially if the set of such functions does not admit a ``nice" description. We are not aware of any results in this direction. In order to operate freely within the class of full shifts the domain of $F$ should not contain the origin in its convex hull. Considering the first hyperoctant of $\R^m$ as the domain of $F$ seems the most natural to us, especially since in dimension one the positive parameter $t$ has a physical interpretation as the inverse temperature of the system. 

In the case when $X$ is a transitive subshift of finite type and the potentials $\phi_1,...,\phi_m$ are H\"older
the pressure function $P_{\rm top}(t_1\phi_1+...+t_m\phi_m)$ is analytic. This fact goes back to the results
of Bowen \cite{Bo} and Ruelle \cite{Ru1,Ru2}. Starting with an analytic function $F(t_1,...,t_m)$ we obtain
from Theorem \ref{thm:main} a set of continuous potentials for which the pressure function coincides with $F$.
However, our potentials are not H\"older. This raises another interesting question of whether an analog of Theorem
\ref{thm:main} holds in the case when the potentials are required to be H\"older,
i.e. whether any analytic convex function is a pressure function for a set of H\"older continuous potentials.

We briefly outline the ideas which go into the proof of Theorem \ref{thm:main}. For simplicity, we consider
a one-parameter pressure function here, i.e., $m=1$. We start with a convex Lipschitz function
$F:(\alpha,\infty)\to\R$. By convexity, for each point on the graph of $F$ there is at least one supporting line and,
by our assumption, it intercepts the vertical axis in the interval $[b,c]\subset[0,\infty)$. Figure \ref{pic:main_thm}
below illustrates the setup. As was mentioned before, a supporting line to the pressure function $P(t\phi)$ at
$t$ must have vertical intercept $h(\mu_t)$ and slope
$\int\phi\, d\mu_t$, where $\mu_t$ is one of the equilibrium states of the potential $t\phi$. Our goal is to construct
$\phi$ such that $F(t)=P(t\phi)$. The general idea is that the equilibrium states of $t\phi$ move among a family
of disjointly supported subshifts when $t$ changes. Hence, we need to find a family of subshifts whose entropies
fill up the whole interval $[b,c]$. Good candidates for this purpose are the $\beta$-shifts.
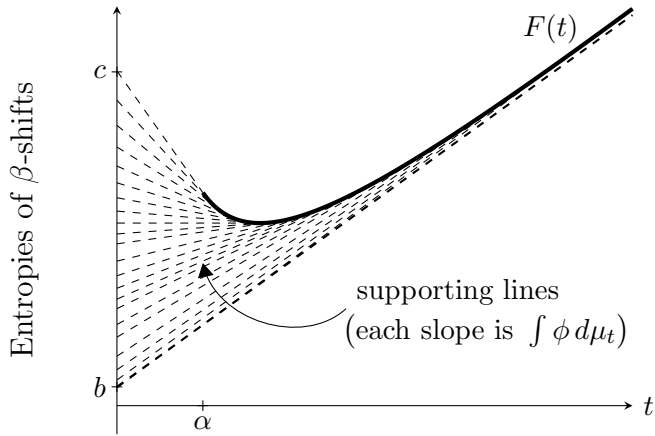
\begin{figure}[h]
\[\begin{tikzpicture}
\begin{axis}[ylabel=Entropies of $\beta$-shifts,
samples=100,
axis y line=left,
axis x line=middle,
ytick=\empty,
xtick=\empty,
clip=false,
xlabel near ticks,
ymax=7, ymin=-0.5,
]
\addplot[smooth,color=black,ultra thick,domain=0.6:3.6]{2*x+1/(0.5*x)-0.8};
\foreach \n in {0.6,0.65,0.7,0.75,0.8,0.85,0.9,0.95,1,1.05,1.1,1.2,1.3,1.4,1.5,1.6,1.8,2, 2.4, 2.8, 3.2, 3.6}
\addplot[smooth,color=black,dashed,domain=0:\n]{(2-2/(\n)^2)*(x-\n)+(2*\n+2/\n-0.8)};
\addplot[smooth,color=black,thick, dashed,domain=0:3.6]{1.81*x+0.33};
\addplot [only marks,mark = +] coordinates {(0.6,0) (0,0.33) (0,5.85)};
\node[below] at (axis cs:0.6,0){\small{$\alpha$}};
\node[left] at (axis cs:0,0.33){\small{$b$}};
\node[left] at (axis cs:0,5.85){\small{$c$}};
\node[left] at (axis cs:3.3,6.6){\small{$F(t)$}};
\node[right] at (axis cs:3.6,0){\small{$t$}};
\node[anchor=west] (source) at (axis cs:1.6,2){\small{supporting lines}};
\node[anchor=west] (source) at (axis cs:1.5,1.3){\small{$\left(\text{each slope is }\int\phi\, d\mu_t\right)$}};
\draw[->,-triangle 60] (axis cs:1.6,1.7) to[bend right=-60] (axis cs:0.6,2.5);
\end{axis}
\end{tikzpicture}\]
\caption{ This figure illustrates the proof of Theorem \ref{thm:main}.}
\label{pic:main_thm}
\end{figure}

The origin of $\beta$-shifts lies in the study of expansions of real numbers in an arbitrary real base $\beta>1$,
which were introduced by Renyi \cite{Re}. Roughly speaking, the $\beta$-shift $X_\beta$ consists of the
sequences of the coefficients in the $\beta$ expansions of reals in $[0,1)$. The measure-theoretic properties
of $\beta$-shifts and their connection to these expansions were initially studied in \cite{Parry, IT, H}. It was
shown that $\{X_\beta;\beta>1\}$ is an increasing family of shift-invariant closed sets with
$h_{\rm top}(X_\beta)=\log\beta$ and $X_\beta$ has a unique measure of maximal entropy.

The entropies of $\beta$-shifts have the properties we need for our construction. The next step
would be to define the potential $\phi$ on each $X_\beta$ as the constant equal to the slope of
the corresponding supporting line. There is an obstacle, however: our subshifts $X_\beta$ are
nested. We avoid it by introducing an additional ``dimension" in the following way. We take a
product of each $\beta$-shift with a suitably chosen Sturmian shift. Sturmian shifts are low
complexity systems with a variety of combinatorial properties useful for our analysis. They
have been studied since the birth of symbolic dynamics \cite{MH}, but modern interest was
sparked by numerous applications in computer science \cite{BBC,CSc,HOSSS,ScZ}.
We refer to Section 4 for definitions and examples of Sturmian shifts.

The low complexity of Sturmian shifts ensures that they do not contribute to the entropy
of the product. We let $\beta$ run from $e^b$ to $e^c$ and obtain a set of disjoint subshifts
of an appropriate full shift which are products of $\beta$-shifts with Sturmian shifts and
whose topological entropies fill the interval $[b,c]$ (recall Figure 1). We are in a position to
define the potential $\phi$ on each such subshift to be the slope of the supporting line to $F(t)$
which crosses the vertical axis at $\log \beta$.

After we make this careful arrangement on all the subshifts, we still need to take care of all the other points in
our full shift. The idea is to make $\phi$ drop off sharply and force the equilibrium measures to be supported
on the products of $\beta$-shifts and Sturmian shifts. This is the most challenging part of the proof.
We accomplish this by using a pin-sequence technique introduced in \cite{Antonioli} to measurably
split orbits of symbolic dynamical systems into finite segments.

Our results have implications for occurrences of phase transitions. A phase transition is
observed when one follows an evolution of a system depending on a continuous external
parameter and a sharp change of the behaviour of the system happens. Understanding the mechanism of
this phenomenon is a fundamental goal in statistical physics. To achieve this, simplified mathematical
models were proposed, the most well known one being the Ising model \cite{Kac, Dob, FS, Sinai},
leading to the development of thermodynamic formalism. In this setting, the quantity
$h(\mu)+\int\phi\,d\mu$ represents the negative free energy of the system in the state $\mu$ with
respect to the observable $\phi$. Hence, the pressure of $\phi$ is the minimum of free energies
and the equilibrium states of $\phi$ characterize the equilibria of the system. The existence of
more than one equilibrium state corresponds to a phase transition.

One way to change the equilibrium state of the system is by adding heat. The measure of temperature in thermodynamics is the
absolute temperature $T$, which is always a positive number with the limit $T\to0^+$ being absolute zero.
Hence, a positive parameter $t=1/T$ (the inverse temperature of the system) is introduced and one studies
how the equilibrium states of $t\phi$ change with $t$, identifying the values of $t$ for which the potential
$t\phi$ has more than one equilibrium state. A classical result by Walters \cite{W1} is that non-differentiability
of the pressure function $t\to P(t\phi)$ at $t_0$ is equivalent to the potential $t_0\phi$ having two equilibrium
states with distinct entropies. Such points of non-differentiability are called \emph{first-order} phase transitions.
Points where the pressure function is differentiable, but not analytic, are termed \emph{higher-order} phase
transitions. Although non-uniqueness of equilibrium states may not appear at such points, they still
indicate a sharp change in some property of the system.

For symbolic systems the first systematic study of a family of potentials exhibiting a phase transition
at some value $t_0$ was done by Lopes in \cite{Lo1, Lo2, Lo3} building upon the previous work of Hofbauer \cite{H2}.
Consequently, phase transitions in thermodynamic formalism were examined using various approaches, see
e.g. \cite{BLL,BL,CR,DGR,Dobbs, IoTo, Lep}. We note that the main results of \cite{Dobbs} and \cite{Lep} deal with the shape of the pressure
function after the transitional value: an example is built of a unimodal map and a subshift of finite type respectively where a phase transition occurs
but the pressure is strictly convex. Prior to \cite{Lep}, in all known examples with phase transitions on shifts, the pressure function was either flat after
the transition, or there was at least some interval where the pressure was flat \cite{IoTo}.

Another aspect concerns the number and frequency of phase transitions. The existence of
infinitely-renormalisable quadratic maps which admit an infinity of phase transitions was shown in \cite{Dobbs}. In \cite{KQW} we construct a continuous potential on $\{0,1\}^\Z$ whose first-order phase transitions occur at any given increasing sequence.
Up to that point there were no examples in the literature of more than two phase transitions in the compact
symbolic setting. Note that the convexity of the pressure implies that at most countably many points of
non-differentiability are possible. Although we see from \cite{KQW} that the case of infinitely many such
points can indeed be realized for the pressure, the requirement for them to form an increasing sequence is
actually quite restrictive. A convex function, in general, may have a dense set of points where its derivative
does not exist. The question remained whether similar behavior is feasible for the pressure function.
As a consequence of our main result we see that the answer is yes. We provide a method of obtaining
continuous potentials whose phase transitions form any given countable set. In addition, the pressure
function between the phase transitions can be made strictly convex, whereas the pressure in \cite{KQW}
is piecewise linear.

Finally we turn our attention to the type of phase transitions where the pressure function is analytic, but uniqueness of equilibrium states fails.
The first example of a transitive system for which two equilibria co-exist despite the analyticity of the pressure
was given in 2015 by Leplaideur \cite{Lep}. His work made it clear that the hope for high regularity of the pressure
function to ensure uniqueness of the equilibrium state was unfounded.
We show that regularity of the pressure does not impose any limitations on the behavior of the equilibria of the system.
At any smooth point of the pressure function the corresponding potential may have any number of ergodic
equilibrium states, finite or infinite. Moreover, the cardinality of equilibrium states may change drastically
when the values of the parameter change. The next theorem provides a flexible way of constructing systems
of potentials with varying cardinalities of the equilibrium measures. To emphasize the one-parameter setting we use lower case $f$ to denote the desired convex function.

\begin{thm}\label{thm:main+card}
Let $f(t)$ be a strictly convex differentiable function on $(\alpha,\infty)$ with support line intercepts
lying in a bounded interval $[b,c]\subset[0,\infty)$. Then for any $\ell\in\N$ and any upper semi-continuous function
$N\colon (\alpha,\infty)\to\{1,\ldots,\ell,\infty\}$, there
exists a full shift $(X,\sigma)$ and a potential function $\phi$ such that
\begin{itemize}
\item $P(t\phi)=f(t)$ for all $t\in (\alpha,\infty)$;
\item the cardinality of the set of ergodic equilibrium states for $t\phi$ is exactly $N(t)$.
\end{itemize}
\end{thm}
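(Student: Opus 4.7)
Theorem \ref{thm:main} applied to $f$ produces a potential $\phi$ on a full shift for which the equilibrium state at each parameter $t$ is the unique measure of maximal entropy on a single subshift $Z_t=X_{\beta(t)}\times\mathcal S_{\rho(t)}$, where $h_{\mathrm{top}}(Z_t)=f(t)-tf'(t)$ and $\phi\equiv f'(t)$ on $Z_t$. Since $\beta$-shifts have unique measures of maximal entropy and Sturmians are uniquely ergodic, the strict convexity and differentiability of $f$ force exactly one ergodic equilibrium state at each $t$. To allow $N(t)$ ergodic equilibrium states, the plan is to replace each $Z_t$ by a disjoint union $\bigsqcup_{k=1}^{N(t)} Z_t^{(k)}$ of subshifts sharing the same topological entropy $f(t)-tf'(t)$ and on which $\phi$ takes the same constant value $f'(t)$. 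A natural realisation comes from varying the Sturmian parameter: set $Z_t^{(k)}=X_{\beta(t)}\times\mathcal S_{\rho_k(t)}$ for distinct Sturmian parameters $\rho_k(t)$, so that each $Z_t^{(k)}$ still carries a unique invariant measure of maximal entropy and the supports are pairwise disjoint, giving $N(t)$ distinct ergodic equilibrium states.

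Upper semi-continuity of $N$ translates to closedness of $S_k:=\{t\in (\alpha,\infty):N(t)\ge k\}$ in $(\alpha,\infty)$, with $(\alpha,\infty)=S_1\supset S_2\supset\ldots\supset S_\ell\supset S_\infty=\bigcap_{k\in\N}S_k$. The copy $Z_t^{(k)}$ for $k\ge 2$ is introduced only for $t\in S_k$. Using the closedness of $S_k$, one selects continuous $\rho_k:S_k\to (0,1)\setminus\mathbb Q$ in such a way that $\bigcup_{t\in S_k} Z_t^{(k)}$ is closed in the ambient full shift and the different layers are uniformly separated. To accommodate values $N(t)=\infty$, the countable family $\{\rho_k(t)\}_{k\in\N}$ on $S_\infty$ is arranged to accumulate at $\rho(t)$ (the Sturmian parameter used in the base layer of the Theorem \ref{thm:main} construction), which guarantees closedness of the countable union $\bigcup_{k,\,t\in S_k} Z_t^{(k)}$ in the full shift and prevents the appearance of spurious limit subshifts.

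Once this enlarged scaffold is in place, I would apply the pin-sequence technique of \cite{Antonioli}, exactly as in Theorem \ref{thm:main}, to define $\phi$ on the complement of the scaffold so that every ergodic equilibrium state of $t\phi$ is forced to be supported on some $Z_t^{(k)}$. Because all $Z_t^{(k)}$ associated to a fixed $t$ share entropy $f(t)-tf'(t)$ and the common $\phi$-integral $f'(t)$, each of them realises the same value $f(t)$ in the Variational Principle, so we still obtain $P(t\phi)=f(t)$, while the number of ergodic equilibrium states at $t$ is exactly $|\{k:t\in S_k\}|=N(t)$. The main obstacle I anticipate is the quantitative adaptation of the pin-sequence estimates to this multi-layered scaffold: the potential $\phi$ must be engineered to drop off sharply outside $\bigcup_{k,\,t\in S_k} Z_t^{(k)}$ uniformly across layers and across $t$, particularly near the relative boundary of each $S_k$ (where the $k$-th layer terminates) and near points of $S_\infty$ (where infinitely many layers accumulate onto the base layer). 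Once this uniformity is achieved, the conclusions about $P(t\phi)$ and the cardinality of ergodic equilibrium states follow as in Theorem \ref{thm:main}.
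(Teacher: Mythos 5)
Your overall blueprint — start from the one-parameter construction of Theorem~\ref{thm:main}, where each $t$ sees a unique equilibrium state on a product of a $\beta$-shift with a Sturmian shift, then blow up that support into $N(t)$ disjoint copies and re-run the pin-sequence argument — captures the right intuition. However, the mechanism you propose for the blow-up is genuinely different from the paper's, and it has several gaps that I do not see how to close without essentially reinventing the paper's device.

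In the paper's construction the Sturmian factor $Y_\gamma$ is not decorative: it is the coordinate from which the parameter $\gamma$ (the intercept of the support line) is read off, and through $\gamma$ the value $s(\gamma)$ that $\phi$ must take on $Z_\gamma$. Your proposal to replace $Y_\gamma$ by $Y_{\rho_k(t)}$ for freely chosen parameters $\rho_k(t)$ destroys this identification: once the Sturmian slope is decoupled from $\gamma$, it is no longer clear how to define $\phi_{k,\bgamma}$, how to prove the analogue of Lemma~\ref{lem:phi on Zbeta} (that $\phi\equiv s(\gamma)$ on the new copies), or how to redo the counting in Lemma~\ref{lem:entropy_est} and Lemma~\ref{lem:int_est}. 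Moreover, the selection of ``continuous $\rho_k\colon S_k\to(0,1)\setminus\mathbb Q$'' is not possible in a useful way: $(0,1)\setminus\mathbb Q$ is totally disconnected, so such a $\rho_k$ is constant on each connected component of $S_k$, which means that over an interval of $t$'s the $k$-th copy would have a \emph{fixed} Sturmian component and the ambient shift could not tell which $t$ it belonged to. Finally, closedness of $\bigcup_{k,\,t\in S_k}Z_t^{(k)}$ and the non-appearance of spurious equilibrium states at $\partial S_k$ are exactly the delicate points; you assert them but do not explain how the upper semi-continuity of $N$ enforces them. The paper instead keeps the Sturmian factor $Y_\gamma$ untouched and appends an \emph{independent} third coordinate $D_\gamma$ (a ``decoration factor'') of zero topological entropy: $D_\gamma$ is a finite set of fixed points when $N(\tau(\gamma))$ is finite, and a closure of Sturmian shifts when $N(\tau(\gamma))=\infty$. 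Because $D_\gamma$ sits on an independent coordinate, $\phi$ and the pin-sequence estimates are unchanged modulo a zero-entropy correction absorbed into $\delta_j$, and upper semi-continuity of $N$ is used precisely to verify that the closure of nearby $Z_{\gamma'}$'s contributes no measures outside $Z_\gamma$ (condition~\ref{it:invmeas} and equation~\eqref{eq:suppusc}). That separation of roles — parameter encoding in $Y_\gamma$, multiplicity in $D_\gamma$ — is what your proposal is missing.
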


This result contrasts sharply with the case of H\"{o}lder potentials, where the pressure function is analytic
and the equilibrium state is always unique. It also immediately provides examples of the types found in
\cite{IT,KQW,Lep}.

The paper is organized as follows. In Sections \ref{sec:ConvexAnalysis}, \ref{sec:beta-shifts},
and \ref{sec:SturmianShifts} we introduce the terminology and prove preliminary lemmas concerning
convex functions, beta-shifts and Sturmian shifts respectively. Section~\ref{sec:main} is devoted to the
proof of Theorem \ref{thm:main}. In Section \ref{sec:OneParameterPressure} we examine the one-parameter
pressure function and establish a slight strengthening of Theorem \ref{thm:main} in this case. Also, here
we supply a procedure for building potentials with a given countable set of first-order phase transitions.
Section \ref{sec:Cardinality} contains a discussion on the cardinality of equilibrium states and the
proof of Theorem \ref{thm:main+card}. Lastly, in Section \ref{FutureDirections} we state a few related open questions within the flexibility program.

\section{Convex analysis}\label{sec:ConvexAnalysis}
Suppose $F:(\alpha,\infty)^m\to \R$ is a convex function of $m$ variables. A vector $\bv\in\R^m$ is a
\emph{subgradient} of $F$ at $\bm{s}\in (\alpha,\infty)^m$ if for all $\bm{t}\in (\alpha,\infty)^m$ we have
$$F(\bm{t})\ge F(\bm{s})+\bv\cdot(\bm{t}-\bm{s}).$$ Hence, $\bv$ is a subgradient of $F$ at $\bs$ if the
affine function $G(\bt)=F(\bs)+\bv\cdot(\bm{t}-\bm{s})$ is a global underestimator of $F$.
The graph of $G$ is a hyperplane in $\R^{m+1}$ which is called a \emph{supporting hyperplane of $F$ at} $\bs$.
\AQchange{We refer to $G(0)$ as the \emph{vertical axis intercept} of the hyperplane,
so that the intercept is at $F(\bs)-\bv\cdot\bs$.}
\AQdelete{Note that this hyperplane intersects the vertical axis at $F(\bs)-\bt\cdot\bs$ and }
Under the assumptions of Theorem \ref{thm:main} this intercept must lie in the interval $[b,c]$. The set of all
subgradients of $F$ at $\bs$ is called the \emph{subdifferential} of $F$ at $\bs$ and is denoted by $\partial F(\bs)$.
Since $F$ is convex, for any $\bs\in(\alpha,\infty)^m$ the set $\partial F(\bs)$ is nonempty, closed and convex.
Moreover, $\partial F(\bs)$ is a singleton if and only if $F$ is differentiable at $\bs$.

Let $F$ be as in Theorem \ref{thm:main}. Denote by $L$ the Lipschitz constant of $F$. We define
\begin{equation}\label{def:S}
  S=\Cl\left(\bigcup_{\bs\in(\alpha,\infty)^m}\{(F(\bs)-\bv\cdot\bs,\bv):\bv\in\partial F(\bs)\}\right).
\end{equation}
Then $S$ is a bounded subset of $\R^{m+1}$. In fact, $S\subset [b,c]\times[-L,L]^m$.

\begin{lem}\label{lem:ConvAnal}
Let $F$ be as in Theorem \ref{thm:main} and let the set $S$ be as defined above.
For each $\bt\in(\alpha,\infty)^m$,
$$
F(\bt)=\sup_{(h,\normalfont{\bv})\in S}(h+\normalfont{\bv}\cdot\bt).
$$
\end{lem}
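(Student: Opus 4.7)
The plan is to prove the two inequalities separately: the easy direction (supremum achieved at $\bt$ itself) and the upper bound (which requires controlling what happens under taking the closure).

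First I would handle the lower bound $F(\bt) \le \sup_{(h,\bv)\in S}(h+\bv\cdot\bt)$. Since $F$ is convex on $(\alpha,\infty)^m$, the subdifferential $\partial F(\bt)$ is non-empty, as stated in the preceding paragraph. Pick any $\bv\in\partial F(\bt)$. Then the pair $(F(\bt)-\bv\cdot\bt,\bv)$ belongs to the set inside the closure in \eqref{def:S} (taking $\bs=\bt$), hence it lies in $S$. Evaluating $h+\bv\cdot\bt$ on this pair yields exactly $F(\bt)$, so the supremum is at least $F(\bt)$ (and in fact is attained).

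Next I would establish the upper bound $h+\bv\cdot\bt\le F(\bt)$ for every $(h,\bv)\in S$. For pairs lying in the pre-closure union, this is just the definition of subgradient: if $h=F(\bs)-\bv\cdot\bs$ with $\bv\in\partial F(\bs)$, then by definition $F(\bt)\ge F(\bs)+\bv\cdot(\bt-\bs)=h+\bv\cdot\bt$. For a general $(h,\bv)\in S$, one selects a sequence $(h_n,\bv_n)$ in the union with $(h_n,\bv_n)\to(h,\bv)$, applies the inequality just shown, and passes to the limit using continuity of the map $(h',\bv')\mapsto h'+\bv'\cdot\bt$. Combined with the first step, this yields equality.

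The only subtlety is making sure the closure step causes no trouble, but this is immediate since the inequality $F(\bt)\ge h_n+\bv_n\cdot\bt$ is preserved in the limit and $F(\bt)$ does not depend on $n$. There is no real obstacle in the proof; it is essentially a restatement of the fact that a convex function equals the supremum of its affine minorants, adapted to the concrete parametrization used to define $S$.
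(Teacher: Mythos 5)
Your proposal is correct and takes essentially the same approach as the paper: one inequality follows by taking $\bv\in\partial F(\bt)$ and noting $(F(\bt)-\bv\cdot\bt,\bv)\in S$, and the other follows from the subgradient inequality on the pre-closure set together with continuity of the linear functional $(h,\bv)\mapsto h+\bv\cdot\bt$ to pass to the closure.
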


\begin{proof}
\AQdelete{Let $\bt\in (\alpha,\infty)^m$. Take $\bu\in\partial F(\bt)$. Note that this set is
non-empty since $F$ is convex. Then for any $\bs\in(\alpha,\infty)^m$
$$
F(\bs)\ge F(\bt)+\bu\cdot\,(\bs-\bt)=F(\bt)-\bu\cdot\bt+\bu\cdot \bs.
$$
Hence the point $(p,\bu)$ with $p=F(\bt)-\bu\cdot\bt$ is in the set $S$ and $F(\bt)=p+\bu\cdot \bt$. Therefore,
$$
F(\bt)\le \sup_{(h,\bv)\in S}(h+\bv\cdot\bt).
$$
}

\AQdelete{
On the other hand, by continuity, for all $(h,\bv)\in S$ and all $\bt\in (\alpha,\infty)^m$
we have $F(\bt)\ge h+\bv\cdot\bt$. Taking the supremum we obtain the opposite inequality
$$
F(\bt)\ge \sup_{(h,\bv)\in S}(h+\bv\cdot\bt).
$$
}

\AQchange{
If $\bs\in(\alpha,\infty)^m$ and $\bv\in\partial F(\bs)$ then for any $\bt\in (\alpha,\infty)^m$,
$F(\bt)\ge h+\bv\cdot \bt$ where $h=F(\bs)-\bv\cdot \bs$. Since  $F(\bs)-\bv\cdot \bs$  depends continuously on $\bs$ and $\bv$, the same inequality
holds for any $(h,\bv)\in S$, so that $$F(\bt)\ge \sup_{(h,\bv)\in S}(h+\bv\cdot \bt).$$
}

\AQchange{
Conversely, given $\bt\in (\alpha,\infty)^m$, let $\bv\in\partial F(\bt)$ and $h=F(\bt)-\bv\cdot\bt$
so that $(h,\bv)\in S$. Now $$F(\bt)=h+\bv\cdot \bt\le\sup_{(h,\bf v)\in S}(h+\bv\cdot \bt),$$ establishing the
reverse inequality.
}
\end{proof}

\section{Beta-shifts}\label{sec:beta-shifts}
The $\beta$-shifts, which emerged from the notion of base $\beta$ representation of real numbers \cite{Re},
were first systematically studied as dynamical systems by Parry in \cite{Parry}. For a fixed $\beta>1$
every real $r\in [0,1]$ has a $\beta$-expansion
$$r=\sum_{n=1}^{\infty}r_n\beta^{-n},$$
where $r_n$ are from the set $\{0,1,...,\lfloor \beta\rfloor \}$. Here, and throughout the text, $\lfloor .\rfloor$
denotes the floor function, i.e. $\lfloor\beta\rfloor$ is the largest integer not exceeding $\beta$.
The coefficients $r_n$ of the $\beta$-expansion of $r$ are defined using the $\beta$-\emph{transformation}
$T_\beta(r)=\beta r\pmod 1$; $r_n=\lfloor \beta T^{n-1}_\beta(r)\rfloor$.

Consider the set of all sequences of the coefficients in $\beta$-expansions of real numbers in $[0,1)$.
In the case where $\beta$ is an integer, our convention is to include the point $(\beta000\ldots)$.
The $\beta$-\emph{shift} $X_\beta$ is defined to be the closure of the extension of this set to two sided
sequences. Hence, $X_\beta$ is a subshift of $\{0,...,\lfloor\beta\rfloor\}^\Z$ with shift map $\sigma$.
Renyi \cite{Re} \AQchange{gave a description of $X_\beta$ in terms of the $\beta$-expansion of 1}.
Precisely, \AQdelete{if 1 has infinite $\beta$-expansion (all $\lfloor \beta T^{n-1}_\beta(1)\rfloor\ne 0$)}
we define the \emph{maximal} word $w^\beta$ by $w_n^\beta=\lfloor \beta T^{n-1}_\beta(1)\rfloor$.
\AQdelete{If the $\beta$-expansion of 1 is finite the maximal word $w^\beta$ is periodic with period
being the smallest $m$ such that $\lfloor \beta T^{m}_\beta(1)\rfloor=0$. In this case
$w_n^\beta=\lfloor \beta T^{n-1}_\beta(1)\rfloor$ for $n<m$ and $w^\beta_m=\lfloor \beta T^{m-1}_\beta(1)\rfloor-1$.}
The sequence $(r_n)_{n=1}^\infty$ corresponds to a $\beta$-expansion of some $r\in[0,1)$ if and only if
for all $j\in\N$ the word $\sigma^j(r_1r_2...)$ is smaller than $w^\beta$ according to the lexicographical order.

It is well known that the topological entropy of $X_\beta$ is $\log\beta$ (the proof can be found in \cite{Re,IT,Walters78}).
\AQdelete{This makes $\beta$-shifts one of the few natural classes of dynamical systems where the entropy can
assume any desired positive real value.} In addition, results of Hofbauer \cite{Hofbauer} and Walters \cite{Walters78}
show that $\beta$-shifts are intrinsically ergodic.
The unique measure of maximal entropy of $X_\beta$ is weak-mixing \cite{Parry} and Bernoulli \cite{Smorodinsky}.

We need some facts about the language of a $\beta$-shift.
As usual, let $\mathcal L_n(X_\beta)$ denote the set of words of length $n$ forming
sub-words of elements of $X_\beta$ and let $\mathcal L(X_\beta)=\bigcup_n
\mathcal L_n(X_\beta)$ be the \emph{language} of $X_\beta$. Some of the calculations in
the next lemma may also be found in Walters' book \cite[page 178]{Walters}.

\begin{lem}\label{lem:betacount}
Let $\beta>1$ and let $X_\beta$ denote the $\beta$-shift.
Then $$\beta^n\le |\mathcal L_n(X_\beta)|\le \frac{\beta}{\beta-1}\beta^n.$$
\end{lem}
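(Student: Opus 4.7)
My plan is to prove the two inequalities separately, both using the interpretation of $X_\beta$ via $\beta$-expansions of reals in $[0,1)$. For the lower bound, I would assign to each admissible word $u\in\mathcal{L}_n(X_\beta)$ the cylinder $I_u\subseteq[0,1)$ of reals whose $\beta$-expansion begins with $u$. Since $T_\beta^n$ is piecewise linear with slope $\beta^n$ a.e.\ and maps $I_u$ into $[0,1)$, we have $|I_u|\le\beta^{-n}$. The cylinders partition $[0,1)$ up to a null set, so $\sum_u |I_u|=1$, which immediately gives $|\mathcal{L}_n(X_\beta)|\ge \beta^n$.

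For the upper bound, set $P_n=|\mathcal{L}_n(X_\beta)|$. The key step is to establish the recurrence
\[
P_n=1+\sum_{k=1}^n w_k^\beta\,P_{n-k},\qquad P_0=1,
\]
by classifying each admissible length-$n$ word $u$ according to the smallest index $k$ at which $u_k\ne w_k^\beta$. If no such $k$ exists, then $u=w_1^\beta\ldots w_n^\beta$, contributing the $+1$. Otherwise positions $1,\ldots,k-1$ are forced to equal $w_1^\beta,\ldots,w_{k-1}^\beta$; there are $w_k^\beta$ choices of $u_k\in\{0,\ldots,w_k^\beta-1\}$; and the tail $u_{k+1}\ldots u_n$ ranges over $\mathcal{L}_{n-k}(X_\beta)$.

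With the recurrence in hand, I would prove by strong induction the slightly sharper bound $P_n\le(\beta^{n+1}-1)/(\beta-1)$, from which the lemma follows since $(\beta^{n+1}-1)/(\beta-1)\le\frac{\beta}{\beta-1}\beta^n$. The base case $P_0=1$ is trivial. For the inductive step, substituting the hypothesis into the recurrence and using the identity $\sum_{k\ge 1}w_k^\beta\beta^{-k}=1$ (the $\beta$-expansion of $1$) reduces the required inequality to
\[
\beta\le\sum_{k=1}^n w_k^\beta+\sum_{k>n}w_k^\beta\,\beta^{n+1-k},
\]
which follows by a term-by-term comparison against $\beta=\sum_{k\ge 1}w_k^\beta\beta^{1-k}$: the coefficient of $w_k^\beta$ on the right is $1\ge\beta^{1-k}$ for $k\le n$, and $\beta^{n+1-k}\ge\beta^{1-k}$ for $k>n$ (using $\beta^n\ge 1$).

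The main obstacle is the combinatorial argument in deriving the recurrence: when $u_k<w_k^\beta$, one must verify that the concatenation $w_1^\beta\ldots w_{k-1}^\beta\,u_k\,u_{k+1}\ldots u_n$ is admissible for any admissible tail $u_{k+1}\ldots u_n$. This relies on the shift-maximality $\sigma^jw^\beta\le w^\beta$ of the maximal word; every other step is a routine calculation.
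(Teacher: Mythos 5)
Your proof is correct, and its core is the same as the paper's: both rest on the recurrence $N_n=1+\sum_{k=1}^n w_k^\beta N_{n-k}$ obtained by decomposing an admissible word according to where it first drops below the maximal word $w^\beta$ (note that for the upper bound only the inequality $N_n\le 1+\sum_k w_k^\beta N_{n-k}$ is needed, which is the easy direction of your bijection, so the admissibility of arbitrary concatenations is not actually required). The two differences are in how the recurrence is closed and how the lower bound is justified. The paper normalizes by setting $p_k=w_k^\beta/\beta^k$ and $m_n=\max_{j\le n}N_j/\beta^j$, getting $m_n\le \beta^{-n}+m_{n-1}$ and summing the geometric series; you instead run a strong induction on the sharper bound $N_n\le(\beta^{n+1}-1)/(\beta-1)$, using the identity $\sum_k w_k^\beta\beta^{-k}=1$ — the same identity the paper uses to say the $p_k$ sum to $1$ — and your term-by-term comparison checks out. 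For the lower bound the paper simply cites $h_{\rm top}(X_\beta)=\log\beta$ together with submultiplicativity of $N_n$, whereas your cylinder argument (each length-$n$ cylinder in $[0,1)$ has Lebesgue measure at most $\beta^{-n}$ and the cylinders cover $[0,1)$) is self-contained and does not presuppose the entropy computation; either is fine.
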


\begin{proof}
Fix $\beta>1$ and let $N_n=|\mathcal L_n(X_\beta)|$. Since $X_\beta$ has entropy
$\log\beta$, sub-multiplicativity of $N_n$ implies that $N_n\ge \beta^n$.

For the opposite inequality let $w^\beta$ be the maximal word for $X_\beta$\AQdelete{, as described above}.
It follows \AQchange{from the description of $X_\beta$ above}
 that an arbitrary element of $\mathcal L(X_\beta)$ is a concatenation of \emph{sub-prefixes} of $w^\beta$, where
a sub-prefix is a word $u$ of some length $k$ such that $u_i=w_i^\beta$ for $i=1,\ldots,k-1$ and $u_k<w_k^\beta$
followed by a (possibly empty) initial segment of $w^\beta$.
In particular, an element of $\mathcal L_n(X_\beta)$ is either the length $n$ prefix of $w^\beta$, or it is a
sub-prefix of some length $j<n$ followed by an arbitrary element of $\mathcal L_{n-j}(X_\beta)$.
Finally, we observe that there are $w_j^\beta$ sub-prefixes of $w^\beta$ of length $j$.
Hence we see
$$
N_n=1+\sum_{j=1}^n w_j^\beta N_{n-j},
$$
where $N_0$ is taken to be 1.
Write $p_n=w^\beta_n/\beta^n$ (so that the $p_n$'s sum to 1);
and $m_n=\max_{j\le n}N_j/\beta^j$.
Dividing the above equation through by $\beta^n$, we obtain
$$
m_n\le \beta^{-n}+\sum_{j=1}^n p_jm_{n-1}\le \beta^{-n}+m_{n-1}.
$$
Therefore, $m_n\le \sum_{j=0}^\infty \beta^{-j}=\frac\beta{\beta-1}$, so that
$N_n\le \frac{\beta}{\beta-1}\beta^n$ as required.
\end{proof}

The following strengthening of the nesting property of $\beta$-shifts is one of the ingredients in the proof
of the main theorem. Although it can be found in the literature (see e.g. \cite{IT}), we give a short proof
here for the sake of completeness.

\begin{lem}\label{lem:BetaRightCts}
Let $\beta>1$. Then
$$
\bigcap_{\beta'>\beta}X_{\beta'}=X_\beta.
$$
\end{lem}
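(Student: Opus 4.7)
The plan is to use the lexicographic characterization of $X_\gamma$ recalled just before the lemma: $x\in X_\gamma$ if and only if all symbols of $x$ lie in $\{0,\ldots,\lfloor\gamma\rfloor\}$ and $\sigma^j x\le w^\gamma$ lexicographically for every $j\in\Z$ (the strict inequality stated for one-sided sequences becomes non-strict after taking the two-sided closure). The inclusion $X_\beta\subseteq\bigcap_{\beta'>\beta}X_{\beta'}$ is immediate from the nesting property stated in the text, so only the reverse inclusion needs work. The whole argument reduces to the following \emph{prefix right-continuity} of the maximal word: for every $N\in\N$ there is $\delta>0$ so that $w_i^{\beta'}=w_i^\beta$ for $i=1,\ldots,N$ whenever $\beta'\in(\beta,\beta+\delta)$.

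Given prefix right-continuity, suppose $x\in\bigcap_{\beta'>\beta}X_{\beta'}$ but $x\notin X_\beta$. The alphabet constraint is automatic, since $\lfloor\beta'\rfloor=\lfloor\beta\rfloor$ for $\beta'$ close enough to $\beta$ from above (in both the non-integer case and, via the paper's convention, the integer case). Hence the failure must be lexicographic: some shift $\sigma^j x$ strictly exceeds $w^\beta$, so there exists $N$ with $(\sigma^j x)_i=w_i^\beta$ for $i<N$ and $(\sigma^j x)_N>w_N^\beta$. Applying prefix right-continuity at this $N$ yields $\beta'\in(\beta,\beta+\delta)$ with $w_i^{\beta'}=w_i^\beta$ for $i=1,\ldots,N$, and then $\sigma^j x>w^{\beta'}$ as well, contradicting $x\in X_{\beta'}$.

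To establish prefix right-continuity, which I expect to be the technical heart of the argument, I would induct on $n$, proving the stronger statement that $T_{\beta'}^n(1)\to T_\beta^n(1)$ \emph{from above} as $\beta'\to\beta^+$ (i.e.\ $T_{\beta'}^n(1)\ge T_\beta^n(1)$ with the difference tending to $0$). Multiplying the inductive hypothesis by $\beta'$ yields $\beta'T_{\beta'}^{n-1}(1)\ge\beta T_\beta^{n-1}(1)$ with small excess. When $\beta T_\beta^{n-1}(1)\notin\Z$ the floor is locally constant at $w_n^\beta$, so $w_n^{\beta'}=w_n^\beta$ and $T_{\beta'}^n(1)\downarrow T_\beta^n(1)$ follow by subtraction. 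The delicate case is $\beta T_\beta^{n-1}(1)=k\in\Z$ (so $T_\beta^n(1)=0$), corresponding to positions where the $\beta$-expansion of $1$ terminates; here the one-sided inductive inequality places $\beta'T_{\beta'}^{n-1}(1)$ just above $k$ rather than just below, keeping the floor equal to $k=w_n^\beta$ and giving $T_{\beta'}^n(1)\downarrow 0$. Without the monotonicity baked into the induction one could instead land just below $k$, forcing the digit of $w^{\beta'}$ down to $k-1$ and breaking the prefix agreement. Taking the minimum of the finitely many thresholds produced for $n\le N$ then yields the uniform $\delta$ required.
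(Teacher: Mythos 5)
You follow the same route as the paper: establish prefix right-continuity of the greedy $\beta$-expansion $w^\beta$ of $1$, and then deduce the lemma from a lexicographic description of $X_\beta$. Your inductive treatment of $T_{\beta'}^n(1)\downarrow T_\beta^n(1)$, including the delicate case $\beta T_\beta^{n-1}(1)\in\Z$, is more detailed than the paper's terse ``it is straightforward to see\ldots{} The conclusion follows,'' but the architecture matches.

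There is, however, a genuine gap in the final step, and it is shared with the paper's own proof. Your claim that ``$x\in X_\gamma$ iff $\sigma^j x\le w^\gamma$ for all $j$ (the strict inequality for one-sided sequences becomes non-strict after the closure)'' fails exactly when $1$ has a finite $\gamma$-expansion. In that case the closure is described by $\sigma^j x\le w^{*\gamma}$, where $w^{*\gamma}$ is the \emph{periodic} quasi-greedy expansion (the object the paper deleted from its setup), and $w^{*\gamma}$ is lexicographically \emph{strictly smaller} than the greedy $w^\gamma$; thus $\{x:\sigma^jx\le w^\gamma\}\supsetneq X_\gamma$. Concretely take $\beta$ the golden mean, so $w^\beta=1100\ldots$ but $X_\beta$ is the golden-mean shift (no $11$); $w^{*\beta}=1010\ldots$. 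The point $x=\ldots 000.11000\ldots$ satisfies $\sigma^jx\le w^\beta$ for all $j$ yet $x\notin X_\beta$. More seriously, $1100\ldots$ is the greedy $\beta'$-expansion of $1/\beta'+1/(\beta')^2<1$ for every $\beta'>\beta$, and every other shift of $x$ is also a valid $\beta'$-expansion, so $x\in X_{\beta'}$ for all $\beta'>\beta$. Hence $x\in\bigcap_{\beta'>\beta}X_{\beta'}\setminus X_\beta$, and the lemma as stated fails at such $\beta$. Prefix right-continuity of $w^\beta$ does not save the argument because the object controlling $X_\beta$ is $w^{*\beta}$, which \emph{jumps} as $\beta'\downarrow\beta$: for all $\beta'>$ golden mean, $11\in\mathcal L_2(X_{\beta'})$ but $11\notin\mathcal L_2(X_\beta)$, so the block structures do not converge. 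You inherit this from the paper (whose ``the conclusion follows'' covers exactly this step), but by making the lexicographic characterization explicit you expose the gap. For the paper's downstream use of the lemma (Lemma~\ref{lem:measures on Z}) the discrepancy is harmless, since the extra points carry no invariant measure; but as a stand-alone claim the identity $\bigcap_{\beta'>\beta}X_{\beta'}=X_\beta$ is not correct at simple Parry numbers, and your ``strict becomes non-strict'' step is the place where the proof would need repair.
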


\begin{proof}
Let $\beta$ and $n$ be fixed.
By definition of $w^\beta$, $T_\beta^j(1)<(w^\beta_j+1)/\beta$ for $j=0,\ldots,n-1$. For $\beta'>\beta$,
it is straightforward to see $T_{\beta'}^j(1)>T_\beta^j(1)$ provided $T_{\beta'}^i(1)<(w^\beta_i+1)/\beta'$
for $i=0,\ldots,j-1$. Since the condition $T_{\beta'}^j(1)<(w^\beta_j+1)/\beta'$ is satisfied
on a small interval to the right of $\beta$, we see that for any $n\in\N$,
there exists an interval $[\beta,\beta+\delta_n)$ on which $w^\beta_j=w^{\beta'}_j$ for $j=0,\ldots,n-1$.
The conclusion follows.
\end{proof}

\section{Sturmian shifts}\label{sec:SturmianShifts}
Sturmian shifts were introduced by Morse and Hedlund in \cite{MH} as symbolic coding of geodesic
trajectories on a flat torus. This makes them one of the earliest general classes of shift spaces studied in dynamics.
The most interesting property of these shifts is, undoubtedly, their low complexity. A non-periodic
Sturmian shift not only has zero entropy, it has the smallest growth rate of blocks possible for infinite
shift spaces \cite{CH}. In addition, Sturmian shifts are minimal \cite{Hed} and uniquely ergodic \cite{Bosh}.

While Sturmian words are generally based on the alphabet $\{0,1\}$, we allow Sturmian words with alphabet $\{
\lfloor \gamma\rfloor, \lceil\gamma\rceil\}$ for any $\gamma$. 
We recall that $\lfloor \gamma \rfloor$ is the largest integer not exceeding $\gamma$, while $\lceil \gamma \rceil$
denotes the smallest integer greater than or equal to $\gamma$, and
$\fract(\gamma)=\gamma-\lfloor \gamma\rfloor$ is the fractional part of $\gamma$.
Given $\gamma\in\R$, we first form the sequence $(y^\gamma_i)_{i=-\infty}^\infty$ by
$y^\gamma_i=\lfloor (i+1)\gamma\rfloor-\lfloor i\gamma\rfloor$, with symbols $\lfloor\gamma\rfloor$
and $\lceil \gamma\rceil$. The Sturmian space $Y_\gamma$
is the orbit closure of $y^\gamma$, that is $\Cl(\{\sigma^n(y^\gamma)\colon n\in\Z\})$.
A \emph{Sturmian word} with \emph{slope} $\gamma$ is an element of $\mathcal L(Y_\gamma)$.
A Sturmian word is an element of $\bigcup_{\gamma\in\R} \mathcal L(Y_\gamma)$.

Our terminology comes from the following geometric interpretation of a Sturmian sequence, which we
illustrate in Figure \ref{pic:sturm}. For $\gamma\in\R$ we draw a line with slope $\gamma$ through the
origin on a square grid. Moving from left to right we record the number of times our line intercepts the
horizontal grid lines in each strip between two consecutive vertical grid lines. These numbers form the
corresponding sequence $y^\gamma$. Hence, we can “read off” the Sturmian word $y^\gamma$ from
the graph of the line with slope $\gamma$.
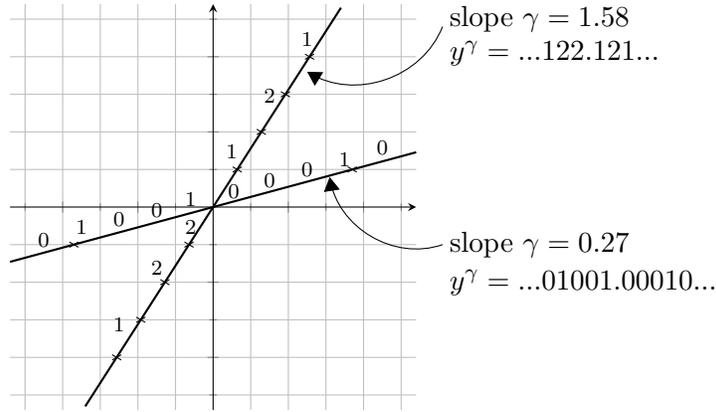
\begin{figure}[h]
\begin{tikzpicture}
  \begin{axis} [grid=both, clip=false,
  x=0.5cm,y=0.5cm,
  axis y line=center,
  axis x line=middle,
  xticklabels=\empty, yticklabels=\empty,
  minor tick num=1,
  ymin=-5.4, ymax=5.4,
  xmin=-5.4,xmax=5.4
  ]
    \addplot [domain=-3.4:3.4, samples=100, thick] { 1.56*x };
    \addplot [domain=-5.4:5.4, samples=100, thick] { 0.27*x };
    \node[anchor=west] at (axis cs:6,5){\small{slope $\gamma=1.58$}};
    \node[anchor=west] at (axis cs:6,4.1){\small{$y^{\gamma}=...122.121...$}};
    \draw[->,-triangle 60] (axis cs:6.1,4.8) to[bend right=-50] (axis cs:2.5,3.6);
    \node[anchor=west] at (axis cs:6,-1){\small{slope $\gamma=0.27$}};
    \node[anchor=west] at (axis cs:6,-2){\small{$y^{\gamma}=...01001.00010...$}};
    \draw[->,-triangle 60] (axis cs:6.1,-1) to[bend right=-50] (axis cs:3.1,0.8);
    \path (axis cs:-2.564,-4) pic[black] {cross=2pt};
    \path (axis cs:-1.923,-3) pic[black] {cross=2pt};
    \path (axis cs:-1.282,-2) pic[black] {cross=2pt};
    \path (axis cs:-0.641,-1) pic[black] {cross=2pt};
    \path (axis cs:0.641,1) pic[black] {cross=2pt};
    \path (axis cs:1.282,2) pic[black] {cross=2pt};
    \path (axis cs:1.923,3) pic[black] {cross=2pt};
    \path (axis cs:2.564,4) pic[black] {cross=2pt};
    \node[anchor=south] at (axis cs:0.5,1){\tiny{1}};
    \node[anchor=south] at (axis cs:1.5,2.5){\tiny{2}};
    \node[anchor=south] at (axis cs:2.5,4){\tiny{1}};
    \node[anchor=south] at (axis cs:-0.6,-1){\tiny{2}};
    \node[anchor=south] at (axis cs:-1.5,-2.1){\tiny{2}};
    \node[anchor=south] at (axis cs:-2.5,-3.6){\tiny{1}};
     \path (axis cs:3.7,1) pic[black] {cross=2pt};
     \path (axis cs:-3.7,-1) pic[black] {cross=2pt};
     \node[anchor=south] at (axis cs:0.55,-0.055){\tiny{0}};
    \node[anchor=south] at (axis cs:1.5,0.25){\tiny{0}};
    \node[anchor=south] at (axis cs:2.5,0.54){\tiny{0}};
    \node[anchor=south] at (axis cs:3.5,0.8){\tiny{1}};
    \node[anchor=south] at (axis cs:4.5,1.1){\tiny{0}};
    \node[anchor=south] at (axis cs:-0.6,-0.25){\tiny{1}};
    \node[anchor=south] at (axis cs:-1.5,-0.55){\tiny{0}};
    \node[anchor=south] at (axis cs:-2.5,-0.8){\tiny{0}};
    \node[anchor=south] at (axis cs:-3.5,-1){\tiny{1}};
    \node[anchor=south] at (axis cs:-4.5,-1.35){\tiny{0}};
  \end{axis}
\end{tikzpicture}
\caption{Geometric interpretation of Sturmian words.}
\label{pic:sturm}
\end{figure}

In the next lemma we characterize the elements of $\mathcal L_n(Y_\gamma)$ and $Y_\gamma$ using a vertical axis intercept of the line with slope $\gamma$.
\begin{lem}\label{lem:SturmChar}
Let $\gamma\in\R$ be fixed. A word $y_0\ldots y_{n-1}$ belongs to $\mathcal L_n(Y_\gamma)$ if and only if
there exists $a\in [0,1)$ such that $y_i=\lfloor (i+1)\gamma+a\rfloor - \lfloor i\gamma+a\rfloor$
for $i=0,\ldots,n-1$.  In particular if $y_0\ldots y_{n-1}$ belongs to $\mathcal L_n(Y_\gamma)$,
then $y_0+\ldots+y_{n-1}$ is either $\lfloor n\gamma\rfloor$ or $\lceil n\gamma\rceil$.

A sequence $(y_i)_{i=-\infty}^\infty$ belongs to $Y_\gamma$ if and only if
\begin{enumerate}[(a)]
\item\label{it:floor} there exists $a\in[0,1)$ such that
$y_i=\lfloor \gamma(i+1)+a\rfloor -\lfloor \gamma i+a\rfloor$ for each $i\in\Z$; or
\item \label{it:ceil}
there exists $a\in (0,1]$ such that $y_i=\lceil \gamma(i+1)+a\rceil - \lceil \gamma i+a\rceil$ for each $i\in\Z$.
\end{enumerate}
\end{lem}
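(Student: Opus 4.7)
The plan is to reduce everything to one elementary computation and one continuity argument. First, observe that
$$
\sigma^n(y^\gamma)_i = \lfloor (i+n+1)\gamma\rfloor - \lfloor (i+n)\gamma\rfloor = \lfloor (i+1)\gamma + a\rfloor - \lfloor i\gamma + a\rfloor,
$$
where $a = n\gamma - \lfloor n\gamma\rfloor \in[0,1)$ and we have used $\lfloor x + k\rfloor = \lfloor x\rfloor + k$ for integer $k$. Hence every point on the orbit of $y^\gamma$ already satisfies (a); the remaining content of the lemma concerns passage to the closure.

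Fix $\gamma$ and, for $a\in\R$, let $f_a, g_a \in \Z^\Z$ denote the sequences defined by the floor and ceiling formulas respectively. The key analytic observations are: (i) $a\mapsto f_a$ is right-continuous and $a\mapsto g_a$ is left-continuous in the product topology, because $a\mapsto\lfloor x+a\rfloor$ is right-continuous for every $x$; and (ii) $\lim_{a'\uparrow a} f_{a'} = g_a$ for all $a$, since $\lim_{a'\uparrow a}\lfloor x+a'\rfloor = \lceil x+a\rceil - 1$ and the two $-1$'s cancel in the telescoped differences. Given an arbitrary $y\in Y_\gamma$ written as $y = \lim_k \sigma^{n_k}(y^\gamma)$, pass to a subsequence on which $a_k := n_k\gamma - \lfloor n_k\gamma\rfloor$ converges in the circle $\R/\Z$ to some $a$, and refine so that the approach is one-sided. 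A right-approach yields $y = f_a$ with $a\in[0,1)$; a left-approach yields $y = g_a$ with $a\in(0,1]$. The one-sided reduction is always possible because otherwise $\sigma^{n_k}(y^\gamma)_i$ would fail to converge at any $i$ with $i\gamma + a\in\Z$ or $(i+1)\gamma + a\in\Z$, contradicting convergence of $\sigma^{n_k}(y^\gamma)$ to $y$; so either no such $i$ exists (in which case $f_a = g_a$ and the side is immaterial), or one can pass to a one-sided subsequence. For the converse inclusion, density of $\{n\gamma - \lfloor n\gamma\rfloor : n\in\Z\}$ in $[0,1)$ for irrational $\gamma$ lets us approach any $a$ from either side and invoke the appropriate continuity statement; the rational case is a direct check on a finite set.

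The finite-word statement then follows from the infinite one: if $y_0\ldots y_{n-1}$ appears in some $z\in Y_\gamma$, then over the range $i=0,\ldots,n-1$ it admits a floor or a ceiling representation, and any ceiling representation at $a\in(0,1]$ converts to a floor representation at $a' = a - \epsilon\in[0,1)$ for $\epsilon > 0$ small enough that no $i\gamma + a$ crosses an integer as $a$ decreases to $a'$ (this is possible precisely because only finitely many indices are involved). Telescoping then gives $y_0 + \ldots + y_{n-1} = \lfloor n\gamma + a\rfloor - \lfloor a\rfloor = \lfloor n\gamma + a\rfloor\in\{\lfloor n\gamma\rfloor, \lceil n\gamma\rceil\}$ since $a\in[0,1)$. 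The main obstacle throughout is the boundary bookkeeping between cases (a) and (b) at points where $i\gamma + a\in\Z$; once the right-continuity/left-continuity dichotomy and the one-sided subsequence extraction are in place, every remaining verification is routine.
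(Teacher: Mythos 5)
Your proof is correct and follows essentially the same route as the paper's: writing $\sigma^{n}(y^\gamma)$ in terms of the fractional part $a=\fract(n\gamma)$, extracting a one-sided (monotone) subsequence of the $a_k$, and using right-continuity of the floor (resp.\ left-continuity of the ceiling) to identify the limit as being of form (a) or (b). The only organizational difference is that you derive the finite-word characterization from the infinite-sequence one by perturbing $a$ downward on a finite window, whereas the paper proves it directly by choosing $k$ with $\fract(k\gamma)$ just above $a$; both are valid.
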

Clearly if $a+i\gamma\not\in\Z$ for all $i$, then the two sequences described in the lemma are equal.
\begin{proof}
If $\gamma$ is rational, $y^\gamma$ is periodic and there is nothing to prove so we suppose
$\gamma$ is irrational.

We first establish the characterization of words.
If there exists $a$ such that $y_i=\lfloor\gamma(i+1)+a\rfloor - \lfloor\gamma i+a\rfloor$ for
$i=0,\ldots,n-1$, then let
$k$ be such that $a<\fract(k\gamma)<a+\min\{1-\fract(a+\gamma i)\colon 0\le i\le n\}$
(such a $k$ exists since the multiples of $\gamma$ are dense modulo 1). One can then check
$y_i=y^\gamma_{k+i}$ for $i=0,\ldots,n-1$. The converse is immediate. Now
if $y$ is of this form, $y_0+\ldots+y_{n-1}=\lfloor a+n\gamma\rfloor - \lfloor a\rfloor$,
which is either $\lfloor n\gamma\rfloor$ or $\lceil n\gamma\rceil$ as required.

We then establish the characterization of $Y_\gamma$.
First suppose $y=\lim_{k\to\infty}\sigma^{n_k}y^\gamma$.
By refining the subsequence if necessary, we may assume that we are in one of the two cases
(i) $\fract (n_k\gamma)$ is a non-increasing sequence converging to some $a\in [0,1)$;
or (ii) $\fract (n_k\gamma)$ is a strictly increasing sequence converging to some $a\in (0,1]$.
For case (i), we note that $\lfloor \cdot\rfloor$ is right continuous, so that for each $i$,
$\lfloor (n_k+i+1)\gamma\rfloor - \lfloor (n_k+i)\gamma\rfloor\to
\lfloor (i+1)\gamma+a\rfloor -\lfloor i\gamma+a\rfloor$, establishing (a).
In the second case, the fact that $\fract(n_k\gamma)$ is strictly increasing implies
that each $n_k$ only appears once, so that $|n_k|\to\infty$.
It follows that for any $i$, $\lfloor (n_k+i+1)\gamma\rfloor-\lfloor (n_k+i)\gamma\rfloor=
\lceil (n_k+i+1)\gamma\rceil-\lceil (n_k+i)\gamma\rceil$ for all sufficiently large $k$
(the only integer multiple of $\gamma$ is $0$). Then since $\lceil\cdot\rceil$
is left continuous, a similar argument to the one above ensures that $y$ satisfies (b).

For the converse, if $y$ satisfies (a), then let $(n_k)$ be chosen so that $\fract(n_k\gamma)$
decreases to $a$. Then $\sigma^{n_k}y^\gamma$ converges to $y$. Similarly, if $y$ satisfies (b), then
choosing $(n_k)$ such that $\fract(n_k\gamma)$ increases to $a$ ensures
$\sigma^{n_k}y^\gamma$ converges to $y$.
\end{proof}

The \emph{weight} of a
Sturmian word $y_0\ldots y_{j-1}$ is $y_0+\ldots+y_{j-1}$. We need the following crude bound
on the number of Sturmian words of a given weight with a fixed length.

\begin{lem}\label{lem:Sturm}
For any $j$ and $n$, there are at most $j(j+1)$ Sturmian words
of length $j$ and weight $n$.
\end{lem}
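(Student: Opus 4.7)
The plan is to parametrize Sturmian words of length $j$ and weight $n$ using the representation in Lemma \ref{lem:SturmChar} and then bound the count by analyzing a simple line arrangement. By that lemma, every such word has the form $y_i = \lfloor(i+1)\gamma + a\rfloor - \lfloor i\gamma + a\rfloor$ for some $\gamma \in \mathbb R$ and $a \in [0,1)$. The word is completely determined by the staircase $(k_0, k_1, \ldots, k_j)$ defined by $k_i := \lfloor i\gamma + a\rfloor$, since $y_i = k_{i+1} - k_i$. Because $\lfloor a \rfloor = 0$ and the weight condition forces $k_j = \lfloor j\gamma + a\rfloor = n$, counting length-$j$ weight-$n$ Sturmian words reduces to counting the distinct intermediate tuples $(k_1, \ldots, k_{j-1})$.

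Next, I would restrict $(\gamma,a)$ to the region $R_n = \{(\gamma, a) : a \in [0, 1),\ j\gamma + a \in [n, n+1)\}$, whose closure is a parallelogram with vertices $(n/j, 0)$, $((n+1)/j, 0)$, $(n/j, 1)$, $((n-1)/j, 1)$. The linear functional $i\gamma + a$ (for $1 \le i \le j-1$) attains its minimum $in/j$ only at $(n/j, 0)$ and its maximum $in/j + 1$ only at $(n/j, 1)$, so throughout the interior of $R_n$ the value $k_i$ is one of at most two integers, $\lfloor in/j\rfloor$ or $\lceil in/j\rceil$, with the transition occurring along the line $\ell_i \colon i\gamma + a = \lceil in/j\rceil$ (which is present only when $in/j \notin \mathbb Z$).

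The distinct tuples $(k_1, \ldots, k_{j-1})$, and hence the distinct length-$j$ weight-$n$ Sturmian words, are therefore in bijection with the cells of $R_n$ determined by at most $j-1$ of the lines $\ell_i$. I would then apply the standard inductive count: when adding the $k$th line to the arrangement, it meets the previously placed $k-1$ lines in at most $k-1$ interior points and so is split into at most $k$ pieces, each of which subdivides an existing cell into two. Hence the number of cells is at most
\[1 + \sum_{k=1}^{j-1} k \;=\; 1 + \frac{(j-1)j}{2} \;\le\; j(j+1),\]
which is the claimed bound.

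The main obstacle is justifying the bijection between cells of $R_n$ and length-$j$ weight-$n$ Sturmian words. Surjectivity follows from Lemma \ref{lem:SturmChar} applied to any $(\gamma, a)$ inside each cell, and injectivity is immediate because the tuple $(k_1,\ldots,k_{j-1})$ is recovered from the word via the partial sums $k_i = y_0 + \cdots + y_{i-1}$. Two mild degeneracies need acknowledgement but do not affect the upper bound: when $in/j$ is an integer the line $\ell_i$ is absent (only reducing the cell count), and when $j \mid n$ the unique constant word $(n/j)^j$ is realized by several parameter choices but is still counted only once.
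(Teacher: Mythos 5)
Your setup matches the paper's—both invoke Lemma \ref{lem:SturmChar} to parameterize length-$j$ weight-$n$ Sturmian words by pairs $(\gamma,a)$ and restrict to the same parallelogram $R_n$—but the final counting step is a genuinely different argument. The paper slides the line down and then rotates it to an extremal position passing through two lattice points of the closed parallelogram and bounds the word count by the $(j+1)\times j$ ordered pairs of lattice $x$-coordinates. You instead observe that inside $R_n$ each partial sum $k_i=\lfloor i\gamma+a\rfloor$ takes at most two values, with the transition on a line $\ell_i$ (absent when $in/j\in\mathbb Z$), so words correspond to cells of an arrangement of at most $j-1$ lines in the parallelogram; the standard inductive cell count yields $1+j(j-1)/2\le j(j+1)$, a sharper bound than the lemma asserts. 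One logical slip worth flagging: for the upper bound you need the cells-to-words map to be \emph{onto}, not a bijection, and what you label ``surjectivity'' (applying Lemma \ref{lem:SturmChar} to a point inside each cell) actually shows the map is well-defined, not that every word is hit. The onto-ness needs one more sentence: by Lemma \ref{lem:SturmChar} any word is realized at some $(\gamma,a)\in R_n$; if that point lies on one or more of the $\ell_i$ it produces the same tuple $(k_1,\ldots,k_{j-1})$ as the adjacent cell on the closed (``$\ge$'') side of those $\ell_i$, which is nonempty because each present $\ell_i$ cuts through the interior of $R_n$. With that repair the argument is correct and gives a better constant; the paper's own proof is comparably informal at the analogous extremal-configuration step.
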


\begin{proof}
By Lemma \ref{lem:SturmChar}, a Sturmian word is parameterized by an intercept $a$ and a slope $\gamma$.
To satisfy the constraint on the weight, we require $0\le a<1$ and $n\le a+j\gamma<n+1$.
That is, one is looking for a straight line joining a point $(0,a)$ to a point $(j,n+b)$ with $0\le a,b<1$.
Such lines all lie within the parallelogram $y-\frac njx\in [0,1)$, $x\in [0,j]$. We refer the reader to the sketch in Figure \ref{pic:lem6}.
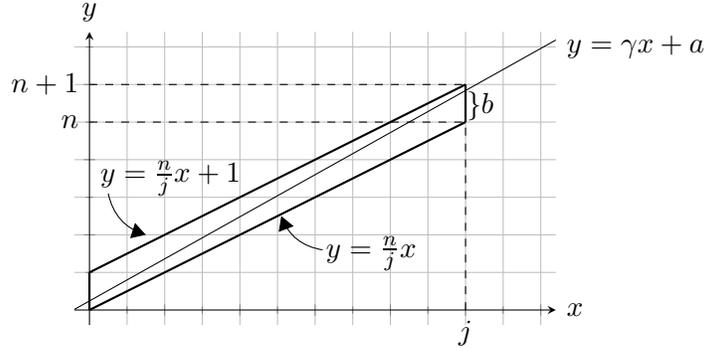
\begin{figure}[h]
\begin{tikzpicture}
  \begin{axis} [grid=both, clip=false,
  x=0.5cm,y=0.5cm,
  axis y line=center,
  axis x line=middle,
  xticklabels=\empty, yticklabels=\empty,
  minor tick num=1,
  ymin=-0.4, ymax=7.4,
  xmin=-0.4,xmax=12.4
  ]
    \addplot [domain=-0.4:12.4, samples=100] { 0.24+0.56*x };
    \draw[thick](axis cs:0,0) -- (axis cs:10,5);
    \draw[thick](axis cs:0,1) -- (axis cs:10,6);
    \draw[thick](axis cs:0,0) -- (axis cs:0,1);
    \draw[thick](axis cs:10,5) -- (axis cs:10,6);
    \draw[dashed](axis cs:10,0) -- (axis cs:10,5);
    \draw[dashed](axis cs:0,5) -- (axis cs:10,5);
    \draw[dashed](axis cs:0,6) -- (axis cs:10,6);
    \node[anchor=north] at (axis cs:10,0){\small{$j$}};
    \node[anchor=east] at (axis cs:0,5){\small{$n$}};
    \node[anchor=east] at (axis cs:0,6){\small{$n+1$}};
    \node[anchor=west] at (axis cs:12.4,0){\small{$x$}};
    \node[anchor=south] at (axis cs:0,7.4){\small{$y$}};
    \node[anchor=west] at (axis cs:6,1.5){\small{$y=\frac{n}{j}x$}};
    \node[anchor=west] at (axis cs:12.4,7){\small{$y=\gamma x+a$}};
    \draw[->,-triangle 60] (axis cs:6.2,1.6) to[bend right=-30] (axis cs:5.1,2.5);
    \node[anchor=west] at (axis cs:0,3.5){\small{$y=\frac{n}{j}x+1$}};
    \draw[->,-triangle 60] (axis cs:0.5,3.1) to[bend left=-30] (axis cs:1.5,2);
    \node[anchor=west] at (axis cs:9.75,5.42){\small{$\}b$}};
  \end{axis}
\end{tikzpicture}
\caption{This figure illustrates the proof of Lemma \ref{lem:Sturm}.}
\label{pic:lem6}
\end{figure}

For each $i$ in $0,\ldots,j$ with $\frac nji\notin\Z$,
there is precisely one integer lattice point in that vertical line within the closure of the
parallelogram, namely $(i,\lceil \frac nj i\rceil)$; or two points $(i,\frac nj i)$ and $(i,\frac nji+1)$
if $\frac nji$ is an integer. Given a Sturmian sequence $y_0\ldots y_{j-1}$ of weight $n$,
let $a$ and $\gamma$ satisfy
\begin{equation}\label{eq:ydef}
y_i=\lfloor (i+1)\gamma+a\rfloor - \lfloor i\gamma +a\rfloor\text{ for $i=0,\ldots,j-1$}.
\end{equation}
To count the number of allowable sequences, we continuously deform $\gamma$ and $a$ without changing the realization of the Sturmian sequence in the coordinates $0,...,j$ until the corresponding line passes through two lattice points. The details are as follows.
One may reduce $a$ keeping \eqref{eq:ydef} satisfied
until the line $y=a+\gamma x$ first hits one of the lattice points, $(i,k)$ say, with $i\in\{0,1,\ldots,j\}$.
One may then rewrite the equation of the line as
$y=k+\gamma(x-i)$, and then reduce $\gamma$ until the line hits
another lattice point in the parallelogram.
The Sturmian word is determined by $i$ together with the $x$-coordinate of the
second lattice point. There are $(j+1)\times j$ such choices, so at most $j(j+1)$ Sturmian words
of length $j$ and weight $n$.
\end{proof}

We now describe the points appearing in the closure of the union of the sets of all Sturmian words \AQchange{over a range of $\gamma$}.

\begin{lem}\label{lem:SturmClose}
Let $Y_\gamma$ denote the Sturmian sequence space with slope $\gamma$ as above.
Suppose $(\gamma^{(n)})_{n\in\N}$ is a sequence of real numbers and
$(y^{(n)})_{n\in \N}$ is a sequence of points with slopes $\gamma^{(n)}$, i.e., $y^{(n)}\in Y_{\gamma^{(n)}}$.
Suppose further that $y^{(n)}\to  y$. Then the sequence $\gamma^{(n)}$ is
convergent to some $\gamma\in \R$.
Either $y\in Y_\gamma$; or
$\gamma$ is rational and $y$ is an aperiodic sequence that is the
concatenation of two periodic semi-infinite words.
\end{lem}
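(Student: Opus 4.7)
The plan is to first use Lemma~\ref{lem:SturmChar} to force $(\gamma^{(n)})$ to converge, then apply that lemma again to parameterize each $y^{(n)}$ by an intercept $a^{(n)}$, pass to various subsequences, and analyze one-sided limits of the floor function at integer values of $c_i:=a+i\gamma$ (where $a=\lim a^{(n)}$).

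Lemma~\ref{lem:SturmChar} implies that for every $k\ge 1$ the partial sum $y^{(n)}_0+\cdots+y^{(n)}_{k-1}$ lies within $1$ of $k\gamma^{(n)}$. Since $y^{(n)}\to y$ pointwise, for each $k$ this partial sum eventually stabilizes to $S_k:=y_0+\cdots+y_{k-1}$, giving $|\gamma^{(n)}-S_k/k|\le 1/k$ for large $n$. This makes $(\gamma^{(n)})$ Cauchy, converging to some $\gamma\in\R$. Passing to a subsequence (the case (b) scenario being symmetric), I assume Lemma~\ref{lem:SturmChar} provides $y^{(n)}_i=\lfloor c^{(n)}_{i+1}\rfloor-\lfloor c^{(n)}_i\rfloor$ with $c^{(n)}_i=a^{(n)}+i\gamma^{(n)}$ and $a^{(n)}\in[0,1)$. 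Further subsequences ensure $a^{(n)}\to a\in[0,1]$, the differences $a^{(n)}-a$ and $\gamma^{(n)}-\gamma$ each have constant sign, and the ratio $\tau_n:=(a-a^{(n)})/(\gamma^{(n)}-\gamma)$ (when defined) converges to some $\tau\in[-\infty,\infty]$. Since $c^{(n)}_i-c_i=(a^{(n)}-a)+i(\gamma^{(n)}-\gamma)$ is affine in $i$, the approach of $c^{(n)}_i$ to $c_i$ is one-sided, with direction determined by the sign of $i-\tau$. Hence $\lim_n\lfloor c^{(n)}_i\rfloor=\lfloor c_i\rfloor$ whenever $c_i\notin\Z$, while if $c_i\in\Z$ the limit is $c_i$ for right approach and $c_i-1$ for left approach.

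When $\gamma$ is irrational, $c_i\in\Z$ for at most one index $i_0$, and the single choice of approach direction there determines whether the resulting formula matches case (a) or case (b) of Lemma~\ref{lem:SturmChar} with parameters $(\gamma,a)$; either way $y\in Y_\gamma$. When $\gamma=p/q$ is rational with $qa\notin\Z$, no $c_i$ is integer and the formula trivially matches case (a). When $\gamma$ is rational, $qa\in\Z$, and the approach is uniform (i.e., $\tau=\pm\infty$, or $\gamma^{(n)}=\gamma$ eventually), $y$ again matches case (a) or case (b) globally. The crucial remaining case is $\gamma=p/q$ rational, $qa\in\Z$, and $\tau\in\R$ finite: then for all sufficiently large integer $i$, $y_i$ follows the case (a) formula with parameters $(\gamma,a)$, while for all sufficiently negative integer $i$ it follows the case (b) formula (or vice versa, depending on the signs). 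Each of these formulas produces a sequence of period $q$, so each tail of $y$ is periodic. A direct computation shows that at any $i_0$ with $c_{i_0}\in\Z$ the case (a) and case (b) values differ by exactly $\pm 1$, so the two periodic patterns are distinct; hence $y$ is the concatenation of two distinct periodic semi-infinite words. If $y$ were itself periodic with period $r$, both tails would have to coincide after a common shift; since the fundamental period of each tail is $q$ this forces $q\mid r$, and then equality of the tails would require the case (a) and case (b) patterns to be equal, a contradiction. Therefore $y$ is aperiodic, giving the second alternative of the lemma.

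The principal difficulty is this last case: one must carry out the subsequence extractions so that the one-sided behavior of $c^{(n)}_i$ at every integer $i$ is controlled, verify that the two tails are exactly the periodic case (a) and case (b) sequences, and show that the corresponding patterns are genuinely distinct so that the concatenation is globally aperiodic.
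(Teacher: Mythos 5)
Your proposal is correct and follows essentially the same route as the paper: the Cauchy argument via block weights, the parameterization by intercepts $a^{(n)}$ with subsequence extractions, and the crossover quantity $\tau_n=(a-a^{(n)})/(\gamma^{(n)}-\gamma)$ is exactly the paper's $j^{(n)}$, with the same case analysis on $\tau=\pm\infty$ versus $\tau$ finite and on whether $a+i\gamma$ hits $\Z$. The only difference is that you explicitly verify aperiodicity of the concatenated word (via the fundamental period $q$ and the $\pm1$ discrepancy between the floor and ceiling patterns), a point the paper's proof leaves implicit.
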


\begin{proof}
Let $\ell\in\N$. Since $y^{(n)}\to y$, there exists an $n_0$ such that for all $n\ge n_0$,
the terms of $y^{(n)}$ in coordinates $-\ell$ to $\ell-1$ agree with those of $y$. Hence
for $n,n'\ge n_0$ the words $y^{(n)}_{-\ell}\ldots y^{(n)}_{\ell-1}$ and
$y^{(n')}_{-\ell}\ldots y^{(n')}_{\ell-1}$  have the same weight, which must lie
simultaneously in the intervals $(2\ell\gamma^{(n)}-1,2\ell\gamma^{(n)}+1)$ and
$(2\ell\gamma^{(n')}-1,2\ell\gamma^{(n')}+1)$ by Lemma \ref{lem:SturmChar}.
It follows that $|\gamma^{(n)}-\gamma^{(n')}|< \frac 1\ell$. Since $\ell$ is arbitrary,
we see that $(\gamma^{(n)})$ is Cauchy. Let $\gamma$ be the limit of $(\gamma^{(n)})$.

Passing to a subsequence, we may assume that $a^{(n)}\to a$. Each $y^{(n)}$ may
be expressed either in the form (a) or (b) of Lemma \ref{lem:SturmChar} with parameters
$a^{(n)}$ and $\gamma^{(n)}$. We may further assume that either each term of the subsequence is expressed
in the form (a); or each term is expressed in the form (b).
Note that for those $i\in\Z$ where $a+i\gamma$ is not
an integer, the sequences $\lfloor a^{(n)}+i\gamma^{(n)}\rfloor$ and
$\lceil a^{(n)}+i\gamma^{(n)}\rceil$ eventually stabilize to $\lfloor a+i\gamma\rfloor$
and $\lceil a+i\gamma\rceil$ respectively.

We deal first with the case where $\gamma$ is irrational. In this case, there is at most one $i_0\in\Z$
such that $a+i_0\gamma\in\Z$. If there is no such $i_0$, then
$\lfloor a^{(n)}+i\gamma^{(n)}\rfloor\to \lfloor a+i\gamma\rfloor$ for each $i$ because $a+i\gamma$ is a continuity point for the floor function for each $i$. A similar statement
is true for the ceilings. It follows that $y_i=\lim y^{(n)}_i=
\lfloor a+(i+1)\gamma\rfloor - \lfloor a+i\gamma\rfloor
=\lceil a+(i+1)\gamma\rceil -\lceil a+i\gamma\rceil$ for each $i$.

Now suppose there exists $i_0$ such that $a+i_0\gamma\in\Z$. Since $a^{(n)}+i_0\gamma^{(n)}$ converges to $a+i_0\gamma$ we can consider three cases:
(i) $a^{(n)}+i_0\gamma^{(n)}=a+i_0\gamma$ infinitely often;
(ii) we can pass to a subsequence such that $a^{(n)}+i_0\gamma^{(n)}$ is strictly  increasing;
(iii) we can pass to a subsequence such that $a^{(n)}+i_0\gamma^{(n)}$ is strictly decreasing.
In case (i), along the subsequence where $a^{(n)}+i_0\gamma^{(n)}=a+i_0\gamma$,
$\lfloor a^{(n)}+i\gamma^{(n)}\rfloor \to \lfloor a+i\gamma\rfloor$ for all $i$ and
$\lceil a^{(n)}+i\gamma^{(n)}\rceil \to \lceil a+i\gamma\rceil$ for all $i$, so that $y\in Y_\gamma$ exactly as in the previous paragraph.
In case (ii), along the subsequence, we have $\lfloor a^{(n)}+i\gamma^{(n)}\rfloor \to \lfloor a+i\gamma\rfloor=\lceil a+i\gamma\rceil -1$ for all $i\ne i_0$ as above. Also, for $i=i_0$ we have $\lfloor a^{(n)}+i_0\gamma^{(n)}\rfloor\to \lfloor a+i_0\gamma\rfloor-1=\lceil a+i_0\gamma\rceil -1$, so that $y\in Y_{\gamma}$ by Lemma \ref{lem:SturmChar}. Similarly in case (iii),
along the subsequence $\lfloor a^{(n)}+i\gamma^{(n)}\rfloor \to \lfloor a+i\gamma\rfloor$ for all $i$.
In all cases, we see $y\in Y_\gamma$.

Now suppose that $\gamma$ is rational, say $\gamma=\frac pq$.
If infinitely many $\gamma^{(n)}$ are equal to $\gamma$ then,
since $Y_\gamma$ is a finite set, we see that one element of $Y_\gamma$ appears infinitely often in the
sequence $y^{(n)}$, so that sequence is the limit and $y\in Y_\gamma$. Otherwise, we may take a sequence
so that $\gamma^{(n)}$ converges strictly monotonically to $\gamma$ and $a^{(n)}$ converges to a limit
$a$. If $a+i\gamma\notin \Z$ for each $i$ (or equivalently $a+i\gamma\notin \Z$
for $i=0,\ldots,q-1$), then the argument given above in the irrational case shows $y\in Y_\gamma$.

In the remaining case, there exists $i_0\in \{0,\ldots,q-1\}$ such that
$a+i\gamma$ is an integer for each $i\in i_0+q\Z$ (and $a+i\gamma$ is not an integer for other $i$'s).
We may then pass to a further
subsequence so that the sequence $j^{(n)}=(a^{(n)}-a)/(\gamma-\gamma^{(n)})$ is monotonic.
If $\gamma^{(n)}$ is increasing, $a^{(n)}+i\gamma^{(n)}<a+i\gamma$ when
$i>j^{(n)}$ and $a^{(n)}+i\gamma^{(n)}>a+i\gamma$
when $i<j^{(n)}$; the situation is reversed if $\gamma^{(n)}$ is decreasing.

For the remainder of the proof, we focus on the case where $\gamma^{(n)}$ is increasing.
If $j^{(n)}\to-\infty$, then for each $i\in\Z$,
$\lceil a^{(n)}+i\gamma^{(n)}\rceil\to \lceil a+i\gamma\rceil$
and $\lfloor a^{(n)}+i\gamma^{(n)}\rfloor \to \lceil a+i\gamma\rceil-1$.
Hence we see that whether the sequence $y^{(n)}$ is expressed in form (a) or form (b),
$y_i= \lceil a+(i+1)\gamma\rceil -\lceil a+i\gamma\rceil$ for all $i\in\Z$.
Similarly if $j^{(n)}\to\infty$, then for each $i\in\Z$, $\lfloor a^{(n)}+i\gamma^{(n)}\rfloor\to
\lfloor a+i\gamma\rfloor$ and $\lceil a^{(n)}+i\gamma^{(n)}\rceil
\to \lfloor a+i\gamma\rfloor+1$, so that $y_i=\lfloor a+(i+1)\gamma\rfloor
-\lfloor a+i\gamma\rfloor$.

We now consider the case $j^{(n)}\to j^*$. We have
$$
\lim_{n\to\infty}\lfloor a^{(n)}+\gamma^{(n)}i\rfloor=
\begin{cases}
\lfloor a+\gamma i\rfloor&\text{if $i<j^*$ or $i\not\in i_0+q\Z$};\\
\lfloor a+\gamma i\rfloor -1&\text{if $i>j^*$ and $i\in i_0+q\Z$}.
\end{cases}
$$
and
$$
\lim_{n\to\infty}\lceil a^{(n)}+\gamma^{(n)}i\rceil=
\begin{cases}
\lceil a+\gamma i\rceil&\text{if $i>j^*$ or $i\not\in i_0+q\Z$};\\
\lceil a+\gamma i\rceil+1&\text{if $i<j^*$ and $i\in i_0+q\Z$}.
\end{cases}
$$
If $j^*\in i_0+q\Z$, then $\lfloor a^{(n)}+\gamma^{(n)}j^*\rfloor$ converges to one of
$\lfloor a+\gamma j^*\rfloor$ and $\lfloor a+\gamma j^*\rfloor-1$; and
$\lceil a^{(n)}+\gamma^{(n)}j^*\rceil$ converges to one of
$\lceil a+\gamma j^*\rceil$ and $\lceil a+\gamma j^*\rceil+1$.

Hence $y$, the difference sequence of one of
$\big(\lim_{n\to\infty} \lfloor a^{(n)}+\gamma^{(n)}i\rfloor\big)_i$
or  $\big(\lim_{n\to\infty} \lceil a^{(n)}+\gamma^{(n)}i\rceil\big)_i$,
is the concatenation of two semi-infinite periodic words, as claimed. In the case where $\gamma^{(n)}$ is
decreasing, an almost identical argument applies.
\end{proof}

\section{Main Theorem}\label{sec:main}

We present the proof of Theorem \ref{thm:main}.
Let the alphabet of the shift be
$$A=\{0,1\ldots,\lfloor e^c\rfloor\}\times \{\lfloor b\rfloor,\ldots,\lceil c\rceil\}\times\{\lfloor -L\rfloor,\ldots,\lceil L\rceil\}^m$$
and let $\sigma$ denote the shift map on $A^\Z$.  The distance between points $z$ and $z'$ in $A^\Z$ is given by
$$d(z,z')=2^{-\inf\{|n|:\  z_n\not=z'_n\}}$$
We construct the potential functions as follows.
For each vector ${\bgamma}=(\gamma_0,\gamma_1,\ldots,\gamma_m)$ in the set $S$ defined in (\ref{def:S}) let
$$
Z_{\bgamma}=X_{e^{\gamma_0}}\times Y_{\gamma_0}\times Y_{\gamma_1}\times \ldots \times Y_{\gamma_m},
$$
where $X_{e^{\gamma_0}}$
is the $\beta$-shift with parameter $\beta=e^{\gamma_0}$ and for $k=0,\ldots,m$
$Y_{\gamma_k}$ is the Sturmian system with angle $\gamma_k$
(so the alphabet of $X_{e^{\gamma_0}}$ is $\{0,1,\ldots,\lfloor e^{\gamma_0}\rfloor\}$ and the alphabet of
$Y_{\gamma_k}$ is $\{\lfloor \gamma_k\rfloor, \lceil\gamma_k\rceil\}$).
In particular, $Z_{\bgamma}\subset A^\Z$. For $z=(x,y^0,\ldots,y^m)\in A^{\Z}$ the projections of $z$ onto
each coordinate are defined by  $\pi_\beta(z)=x$ and $\pi_k(z)=y^k$, $k=0,\ldots,m$.

Let $\mathcal L_{n}(Z_{\bgamma})$ denote the
collection of $n$-words in $Z_{\bgamma}$. For $z\in A^\Z$, we set $j_{\bgamma}(z)=\max\{l\colon z_{-(l-1)}\ldots z_{l-1}\in
\mathcal L_{2l-1}(Z_{\bgamma})\}$,
where $j_\gamma(z)$ is taken to be 0 if $z_0\not\in \mathcal L_0(Z_{\bgamma})$.
For $\bgamma=(\gamma_0,\gamma_1,\ldots,\gamma_m)\in S$ and $k=1,\ldots,m$ we define
$$
\phi_{k,\bgamma}(z)=\gamma_k-\delta_{j_{\bgamma}(z)},
$$
where $\delta_j$ is given by
\begin{equation}\label{eq:delta_j}
  \delta_j=\frac{c + 2L+14+9\log j}{j\min\{\alpha,1\}}
\end{equation}
with $\delta_0=\delta_1+2L$ and $\delta_\infty=0$.
Notice that $(\delta_j)$ form a decreasing sequence, converging to 0.

If $d(z,z')\le 2^{-l}$,
then the two words $z_{-(l-1)}\ldots z_{l-1}$ and $z'_{-(l-1)}\ldots z'_{l-1}$ are equal.
Either both lie in
$\mathcal L_{2l-1}(Z_\bgamma)$, in which case $|\phi_{k,\bgamma}(z)-\phi_{k,\bgamma}(z')|\le \delta_l$
or neither do, in which case $\phi_{k,\bgamma}(z)=\phi_{k,\bgamma}(z')$. Hence we
have shown that for each $k=1,\ldots,m$ the family
$\{\phi_{k,\bgamma}\colon \bgamma\in S\}$ is uniformly equicontinuous.
The potentials $\phi_k$ on $A^\Z$ are then defined by
$$
\phi_k(z)=\sup_{\bgamma\in S}\phi_{k,\bgamma}(z).
$$
The uniform equicontinuity ensures that each $\phi_k$ is continuous.

The proof of the theorem splits into two parts. First, we restrict our considerations to the set
$Z=\bigcup_{\bgamma\in S} Z_{\bgamma}$. On this set we show that the potentials
$\phi_1,\ldots,\phi_n$ have the property we are looking for. Namely, for given values of
parameters $t_k> \alpha$ the pressure of $t_1\phi_1+\ldots+t_m\phi_m$, when restricted
to $\Cl(Z)$, coincides with the value of $F(t_1,\ldots,t_m)$. Afterwards, we demonstrate that
the values of $\phi_k$ outside of the set $Z$ do not contribute to the pressure.

We start by describing the behavior of the potentials $\phi_1,\ldots,\phi_k$ on $Z$.
\begin{lem}\label{lem:phi on Zbeta}
Let $\bgamma=(\gamma_0,\ldots,\gamma_m)\in S$. If $z\in Z_{\bgamma}$, then
$\phi_k(z)=\gamma_k$ for each $k=1,\ldots,m$.
\end{lem}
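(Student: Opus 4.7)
The plan is to prove the two inequalities $\phi_k(z) \geq \gamma_k$ and $\phi_k(z) \leq \gamma_k$ separately, where the second is the one that requires real work.

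For the lower bound, I would simply plug in $\bgamma' = \bgamma$ into the supremum defining $\phi_k$. Since $z \in Z_{\bgamma}$, every finite centered sub-word of $z$ belongs to $\mathcal{L}(Z_{\bgamma})$, so $j_{\bgamma}(z) = \infty$ and hence $\phi_{k,\bgamma}(z) = \gamma_k - \delta_\infty = \gamma_k$. Therefore $\phi_k(z) \geq \gamma_k$.

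For the upper bound, I need to show that for every $\bgamma' \in S$, $\phi_{k,\bgamma'}(z) = \gamma'_k - \delta_{j_{\bgamma'}(z)} \leq \gamma_k$. Set $j = j_{\bgamma'}(z)$. The key idea is that if $j$ is large, then $\bgamma'$ must be close to $\bgamma$ (in its $k$-th coordinate), and $\delta_j$ provides the required slack. When $j \geq 1$, the central word $z_{-(j-1)}\ldots z_{j-1}$ lies simultaneously in $\mathcal{L}_{2j-1}(Z_{\bgamma'})$ (by definition of $j$) and in $\mathcal{L}_{2j-1}(Z_{\bgamma})$ (since $z \in Z_{\bgamma}$). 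Projecting onto the $k$-th Sturmian coordinate and applying Lemma~\ref{lem:SturmChar} twice, its common weight lies in both $\{\lfloor (2j-1)\gamma'_k\rfloor, \lceil(2j-1)\gamma'_k\rceil\}$ and $\{\lfloor (2j-1)\gamma_k\rfloor, \lceil(2j-1)\gamma_k\rceil\}$, forcing $|\gamma'_k - \gamma_k| \leq 2/(2j-1) \leq 2/j$. A direct comparison with $\delta_j = (c + 2L + 14 + 9\log j)/(j\min\{\alpha,1\}) \geq 14/j$ then gives $\gamma'_k - \gamma_k \leq \delta_j$. When $j = 0$, I would use that $\bgamma, \bgamma' \in S \subset [b,c]\times[-L,L]^m$, so $\gamma'_k - \gamma_k \leq 2L \leq \delta_0$ by the definition $\delta_0 = \delta_1 + 2L$. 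Taking the supremum over $\bgamma' \in S$ completes the upper bound.

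The main obstacle I expect is the Sturmian weight comparison: one must recognize that although $z$ could a priori be in many different $Z_{\bgamma'}$ (causing the sup in the definition of $\phi_k$ to potentially exceed $\gamma_k$), the weights of Sturmian sub-words pin down the slope $\gamma'_k$ to within $O(1/j)$ of $\gamma_k$, and the constants in $\delta_j$ were chosen precisely to absorb this discrepancy. Once this tight link between $j$ and $|\gamma'_k - \gamma_k|$ is established via Lemma~\ref{lem:SturmChar}, everything else is a straightforward arithmetic check.
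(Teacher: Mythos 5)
Your proof is correct and takes essentially the same route as the paper's: both arguments compare the weight of the common central Sturmian sub-word via Lemma~\ref{lem:SturmChar} to tie $|\gamma'_k-\gamma_k|$ to $j_{\bgamma'}(z)$ and then check that $\delta_j$ absorbs the discrepancy (the paper runs the estimate in the contrapositive direction, setting $j=\lceil 1/(\gamma'_k-\gamma_k)\rceil$ and deducing $j_{\bgamma'}(z)\le j$, which lets it avoid a separate $j=0$ case). The one case you should add explicitly is $j_{\bgamma'}(z)=\infty$, where $\delta_\infty=0$ and you need $\gamma'_k\le\gamma_k$ outright; this follows by applying your weight comparison at every finite length, which forces $\gamma'_k=\gamma_k$.
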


\begin{proof}
Let $\bgamma=(\gamma_0,\ldots,\gamma_m)\in S$ and let $z=(x,y^0,\ldots,y^m)\in Z_{\bgamma}$.
Fix $1\le k\le m$. From the definition, we see $\phi_{k,\bgamma}(z)=\gamma_k-\delta_\infty=\gamma_k$.
We will show that for any $\bgamma'=(\gamma_0',\ldots,\gamma_m')\in S$ we have $\phi_{k,\bgamma'}(z)\le \gamma_k$.

If $\gamma_k'\le\gamma_k$, then $\phi_{k,\bgamma'}(z)\le \gamma'_k\le \gamma_k$. If $\gamma_k'>\gamma_k$,
then we set $j=\lceil 1/(\gamma_k'-\gamma_k)\rceil$.
Since $z\in Z_{\bgamma}$, we know that $y^k\in Y_{\gamma_k}$ and hence by Lemma \ref{lem:SturmChar} there exists $a\in[0,1)$ such that
$y_i^k=\lfloor (i+1)\gamma_k+a\rfloor -\lfloor i\gamma_k+a\rfloor$ for each
$i=-j,\ldots,j-1$. In particular, $y_{-j}^k+\ldots+y_{j-1}^k=
\lfloor j\gamma_k+a\rfloor -\lfloor -j\gamma_k+a\rfloor$, so that
$y_{-j}^k+\ldots+y_{j-1}^k\in (2j\gamma_k-1,2j\gamma_k+1)$ and this holds for any
$y^k_{-j}\ldots y^k_{j-1}\in \mathcal L_{2j}(Y_{\gamma_k})$.

If the block $y^k_{-j}\ldots y^k_{j-1}$ were also in $\mathcal L_{2j}(Y_{\gamma'_k})$, then similar to the argument above we would have
$y^k_{-j}+\ldots+y^k_{j-1}\in (2j\gamma_k'-1,2j\gamma_k'+1)$.
However, by the choice of $j$, $2j\gamma_k-1\le 2j\gamma'_k+1$,
so that $\mathcal L_{2j}(Y_{\gamma_k})$ and $\mathcal L_{2j}(Y_{\gamma'_k})$ are disjoint.
It follows that $z_{-j}\ldots z_{j-1}\notin \mathcal L_{2j}(Z_{\bgamma'})$ and hence $j_{\bgamma'}(z)\le j$.

Using the facts that $j=\lceil 1/(\gamma_k'-\gamma_k)\rceil$ and $\gamma_k,\gamma'_k\in[-L,L]$ when $\bgamma,\bgamma'\in S$ we obtain
$$
j\le \frac{1}{\gamma_k'-\gamma_k}+1=\frac{1+\gamma'_k-\gamma_k}{\gamma'_k-\gamma_k}\le \frac{1+2L}{\gamma'_k-\gamma_k}.
$$
Therefore,
$$
\phi_{k,\bgamma'}(z)=\gamma'_k-\delta_{j_{\bgamma'}(z)}\le\gamma'_k-\delta_j<\gamma'_k-\frac{1+2L}{j}\le\gamma_k.
$$

\end{proof}

We now turn our attention to the invariant measures on $Z$.

\begin{lem}\label{lem:measures on Z}
  Let $Z=\bigcup_{\bgamma\in S}Z_{\bgamma}$ as above. Then any ergodic invariant measure supported on $\Cl(Z)$
is supported on $Z_{\bgamma}$ for some $\bgamma\in S$.
\end{lem}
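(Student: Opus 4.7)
The plan is as follows. Let $\mu$ be an ergodic $\sigma$-invariant measure on $\Cl(Z)$. For each $k\in\{0,1,\ldots,m\}$ define the quantity $\gamma_k^*:=\int \pi_k(z)_0\,d\mu(z)$, the $\mu$-expected value of the $0$-th coordinate of the $k$-th Sturmian factor. I will show that $\mu$ is supported on $Z_{\bgamma^*}$ and that $\bgamma^*:=(\gamma_0^*,\ldots,\gamma_m^*)\in S$.

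First I would analyze the Sturmian projections. For each $k$, the push-forward $\mu_k:=(\pi_k)_*\mu$ is an ergodic $\sigma$-invariant measure on $\Cl(\pi_k(Z))\subseteq \Cl(\bigcup_{\gamma}Y_\gamma)$. By Lemma \ref{lem:SturmClose}, every element of this closure is either in some $Y_\gamma$ or is the concatenation of two periodic Sturmian semi-infinite words of a common rational slope. Concatenation points are wandering (their orbits escape to the periodic orbit in both time directions), so by the Poincar\'e recurrence theorem they form a $\mu_k$-null set. For the remaining points, unique ergodicity of Sturmian shifts forces the Birkhoff average of the first coordinate of any $y\in Y_\gamma$ to equal $\gamma$, so Birkhoff's ergodic theorem applied to $\mu_k$ gives $\gamma=\gamma_k^*$. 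Hence $\mu_k$ is supported on $Y_{\gamma_k^*}$, and for $\mu$-a.e.\ $z$ each Sturmian coordinate $y^k$ lies in $Y_{\gamma_k^*}$.

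Next I would handle the $\beta$-shift factor by coupling it to the Sturmian factor $y^0$. For $\mu$-a.e.\ $z=(x,y^0,\ldots,y^m)$, pick any approximating sequence $z^{(n)}\in Z_{\bgamma^{(n)}}$ with $z^{(n)}\to z$. The Sturmian projection $y^{0,(n)}\in Y_{\gamma_0^{(n)}}$ converges to $y^0\in Y_{\gamma_0^*}$, and Lemma \ref{lem:SturmClose} forces $\gamma_0^{(n)}$ to converge to a definite limit $\gamma$ with $y^0\in Y_\gamma$; since distinct Sturmian slopes produce disjoint shift spaces, $\gamma=\gamma_0^*$. Therefore $\gamma_0^{(n)}\to\gamma_0^*$. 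For any $\varepsilon>0$, eventually $\gamma_0^{(n)}<\gamma_0^*+\varepsilon$, so by the nesting of $\beta$-shifts $x^{(n)}\in X_{e^{\gamma_0^*+\varepsilon}}$; closedness of $X_{e^{\gamma_0^*+\varepsilon}}$ gives $x\in X_{e^{\gamma_0^*+\varepsilon}}$, and right-continuity (Lemma \ref{lem:BetaRightCts}) then yields $x\in X_{e^{\gamma_0^*}}$. Combining, $\mu$-a.e.\ $z\in Z_{\bgamma^*}$. Applying the same argument to each $k$ shows $\bgamma^{(n)}\to\bgamma^*$, and since $S$ is closed by definition we conclude $\bgamma^*\in S$.

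The main obstacle is the coupling step for the $\beta$-shift coordinate: a priori $x$ could inhabit a strictly larger $\beta$-shift than what Sturmian data suggest, and this is ruled out only by combining the right-continuity of the family $\{X_\beta\}$ with the rigidity supplied by Lemma \ref{lem:SturmClose}. A secondary subtlety is eliminating the concatenation case in Lemma \ref{lem:SturmClose}; here Poincar\'e recurrence does the job, since a concatenation point is never recurrent.
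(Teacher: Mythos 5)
Your proof is correct and follows essentially the same route as the paper's: use Lemma \ref{lem:SturmClose} to describe limit points, discard the aperiodic concatenation points as a null set (the paper does this by noting they form a countable set of aperiodic points, you by Poincar\'e recurrence --- equivalent here), pin down each Sturmian slope via Birkhoff averages of the zeroth coordinate, and recover the $\beta$-shift factor from the nesting property together with Lemma \ref{lem:BetaRightCts}. The only cosmetic difference is that you define $\bgamma^*$ by integration up front rather than extracting it from the ergodic theorem afterward, and you explicitly record that $\bgamma^*\in S$ by closedness of $S$, which the paper leaves implicit.
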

\begin{proof}
We first describe points of $\text{Cl}(Z)$. Let $\bar z\in \text{Cl}(Z)$. Then $\bar z$ is the limit of a sequence of points
$z(n)$, where each $z(n)$ belongs to some $Z_{\bgamma(n)}$ with $\bgamma(n)\in S$. Write $z(n)=(x(n),y^0(n),\ldots,
y^m(n))$ and $\bar z=(\bar x,\bar y^0,\ldots,\bar y^m)$. Since $y^k(n)\to \bar y^k$, it
follows from Lemma \ref{lem:SturmClose} that $\gamma_k(n)$ converges to some limit for each $k=0,\ldots,m$.
Let $\bar\bgamma=\lim_{n\to\infty}\bgamma(n)$, so that $\bar\bgamma\in S$.
By Lemma \ref{lem:SturmClose}, for each $0\le k\le m$,
$\bar y^k$ belongs either to $Y_{\bar\gamma_k}$, or is the (non-periodic)
concatenation of two semi-infinite periodic words.

We claim that $\bar x\in X_{e^{\bar\gamma_0}}$.
To see this, first notice that since $z(n)\in Z_{\bgamma(n)}$, we have
$x(n)\in X_{e^{\gamma_0(n)}}$. For any $\beta>\bar\gamma_0$ there is $n_0\in\N$ such that $\gamma_0(n)<\beta$ for $n\ge n_0$.
It follows that $x(n)\in X_{e^\beta}$ for all $n\ge n_0$ and hence $\bar{x}\in X_{e^\beta}$.
Since $\beta>\bar\gamma_0$ is arbitrary, $\bar x$ lies in $\bigcap_{\beta>\bar\gamma_0} X_{e^\beta}=X_{e^{\bar\gamma_0}}$
by Lemma \ref{lem:BetaRightCts}.

Let $C$ denote the (countable) collection of non-periodic concatenations of two semi-infinite
periodic points with symbols in the set
$\{\min(\lfloor b\rfloor, \lfloor -L\rfloor),\ldots,\max(\lceil c\rceil,\lceil L\rceil)\}$.
We have shown that any point of $\Cl(Z)$ either lies in some $Z_{\bgamma}$ with $\bgamma\in S$
or one of its Sturmian coordinates lies in $C$.

Let $\mu$ be an ergodic invariant measure supported on $\Cl(Z)$.
Suppose for a contradiction that
$\mu$ is supported on $\text{Cl}(Z)\setminus Z$.
Then for $\mu$-a.e.\ $z=(x,y^0,\ldots,y^m)$,
there exists a $0\le k\le m$ such that $y^k\in C$. Since $\mu$ is ergodic,
there exists a $k$ such that
for $\mu$-a.e.{} $(x,y^0,\ldots,y^m)$, $y^k\in C$. In particular, the projection
of $\mu$ onto the $k$th
Sturmian factor is supported on a countable set. But this is a contradiction as
countable sets of aperiodic words
do not support any finite invariant measures. Hence $\mu$ is supported on $Z$.

It is left to show that $\mu$ is supported on some $Z_{\bgamma}$. Fix $k\in\{0,\ldots,m\}$ and consider
the projection map $f_k(z)=y^k_0$ where $z=(x,y^0,\ldots,y^m)\in Z$. Since $\mu$ is ergodic and $f$ is continuous,
there is $\gamma_k\in\R$ such that
\begin{equation*}\label{eq:support_mu}
 \frac 1N\sum_{i=0}^{N-1}f(\sigma^i z)=\gamma_k\quad \text{for }\mu\text{-almost all } z\in Z.
\end{equation*}
Suppose that $z\in Z_{\bgamma'}$ for some $\bgamma'\in S$ and satisfies the above. Then an application of
Lemma \ref{lem:SturmChar} gives
$$
\frac 1N\sum_{i=0}^{N-1}f(\sigma^i z)=\frac{y^k_0+\ldots+y^k_{N-1}}{N}=\frac{\lfloor N\gamma_k'+a\rfloor}{N}
$$
for some $a\in [0,1)$. Hence, $\gamma'_k=\gamma_k$ and $\mu$ is supported on $Z_{\gamma}$ with
$\bgamma=(\gamma_0,\ldots,\gamma_m)$.
\end{proof}

\begin{cor}\label{cor:p on Z}
Let $\bt=(t_1,\ldots,t_m)\in (\alpha,\infty)^m$ and $\mu$ be an ergodic shift-invariant measure supported on $\Cl(Z)$.
Then
$$
h(\mu)+\int (t_1\phi_1+\ldots+t_m\phi_m)\,d\mu\le F(t_1,\ldots,t_m).
$$
Further there exists an ergodic measure $\mu_{\bt}$ supported on $Z$ such that
$$
h(\mu_{\bt})+\int (t_1\phi_1+\ldots+t_m\phi_m)\,d\mu_{\bt}= F(t_1,\ldots,t_m).
$$
\end{cor}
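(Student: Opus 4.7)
The plan is to prove the two inequalities separately, each reducing quickly to the preceding lemmas.

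For the upper bound, I would start by invoking Lemma \ref{lem:measures on Z} to conclude that, since $\mu$ is ergodic and supported on $\Cl(Z)$, its support is contained in some $Z_{\bgamma}$ for $\bgamma=(\gamma_0,\ldots,\gamma_m)\in S$. Lemma \ref{lem:phi on Zbeta} then gives $\phi_k\equiv\gamma_k$ on $Z_{\bgamma}$ for $k=1,\ldots,m$, so $\int\phi_k\,d\mu=\gamma_k$. The set $Z_{\bgamma}=X_{e^{\gamma_0}}\times Y_{\gamma_0}\times\ldots\times Y_{\gamma_m}$ is a direct product of a $\beta$-shift with $h_{\rm top}=\gamma_0$ and $m+1$ Sturmian shifts, each having zero topological entropy; hence $h_{\rm top}(Z_{\bgamma})=\gamma_0$ and $h(\mu)\le\gamma_0$. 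Since $(\gamma_0,\gamma_1,\ldots,\gamma_m)\in S$, Lemma \ref{lem:ConvAnal} yields
$$
h(\mu)+\sum_{k=1}^m t_k\int\phi_k\,d\mu\le \gamma_0+\sum_{k=1}^m t_k\gamma_k\le F(t_1,\ldots,t_m).
$$

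For the equality statement, I would first note that $S\subset[b,c]\times[-L,L]^m$ is closed and therefore compact, so that the supremum in Lemma \ref{lem:ConvAnal} is actually attained. Pick $(h^*,\bv^*)\in S$ with $F(\bt)=h^*+\bv^*\cdot\bt$ and set $\bgamma=(h^*,\bv^*)=(\gamma_0,\ldots,\gamma_m)$. Let $\nu$ be the Parry measure of maximal entropy on $X_{e^{h^*}}$, which exists, has entropy $h^*$, and is weakly mixing (indeed Bernoulli, as recorded in Section \ref{sec:beta-shifts}). Each Sturmian factor $Y_{\gamma_k}$ is a nonempty compact shift-invariant system, so by Krylov--Bogolyubov the product $Y_{\gamma_0}\times\ldots\times Y_{\gamma_m}$ carries at least one invariant measure, and we may select an extreme point $\lambda$ which is then ergodic.

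Set $\mu_{\bt}=\nu\times\lambda$. Because $\nu$ is weakly mixing and $\lambda$ is ergodic, the product $\mu_{\bt}$ is an ergodic invariant measure on $Z_{\bgamma}\subset\Cl(Z)$. The Sturmian factors contribute zero entropy (by the variational principle applied to each or to their product, since $h_{\rm top}=0$), so $h(\mu_{\bt})=h(\nu)=h^*=\gamma_0$. Meanwhile, by Lemma \ref{lem:phi on Zbeta}, $\int\phi_k\,d\mu_{\bt}=\gamma_k$ for $k=1,\ldots,m$. Adding,
$$
h(\mu_{\bt})+\sum_{k=1}^m t_k\int\phi_k\,d\mu_{\bt}=\gamma_0+\sum_{k=1}^m t_k\gamma_k=h^*+\bv^*\cdot\bt=F(\bt).
$$

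The only subtle point is the ergodicity of $\mu_{\bt}$; this is where weak mixing of the Parry measure, rather than mere ergodicity, is essential (Sturmian systems are not in general weakly mixing, so one cannot play the roles symmetrically). Everything else is a direct bookkeeping application of Lemmas \ref{lem:ConvAnal}, \ref{lem:measures on Z}, and \ref{lem:phi on Zbeta}, together with compactness of $S$ to ensure the extremal $(h^*,\bv^*)$ exists.
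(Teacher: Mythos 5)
Your proposal is correct and follows the same overall strategy as the paper: the upper bound by Lemma \ref{lem:measures on Z}, Lemma \ref{lem:phi on Zbeta}, the product formula for topological entropy, and Lemma \ref{lem:ConvAnal}; and the equality by exhibiting a measure of maximal entropy on the appropriate $Z_{\bgamma}$. The one place you deviate is in how you produce the ergodic equilibrium state. The paper forms the product of the measure of maximal entropy on $X_{e^{\gamma_0}}$ with the $m+1$ Sturmian measures, then passes to an ergodic component and verifies this component still projects onto the MME in the first factor; it also finds $\bgamma$ directly by choosing a subgradient $\bv\in\partial F(\bt)$ and setting $\gamma_0=F(\bt)-\bv\cdot\bt$, without appealing to compactness of $S$. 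You instead use compactness of $S$ to find an extremal $(h^*,\bv^*)$, and you build the ergodic measure in one step as a product $\nu\times\lambda$ with $\nu$ weak-mixing and $\lambda$ ergodic, relying on the standard fact that such a product is ergodic. Both constructions are valid and yield the same conclusion; yours avoids the ergodic-decomposition step at the cost of invoking weak mixing of the Parry measure (which the paper does record in Section \ref{sec:beta-shifts} and uses later in Section \ref{sec:Cardinality} for a very similar purpose), while the paper's ergodic-component argument works even with mere ergodicity of the $\beta$-shift MME.
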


\begin{proof}
Let $\mu$ be as in the statement of the corollary. Note that by Lemma \ref{lem:measures on Z},
 $\mu$ is supported on $Z_{\bgamma}$ for some $\bgamma=(\gamma_0,\ldots,\gamma_m)\in S$.
Then $h(\mu)\le h_\text{top}(Z_{\bgamma})=
h_\text{top}(X_{e^{\gamma_0}})+h_\text{top}(Y_{\gamma_0})+\ldots+
h_\text{top}(Y_{\gamma_m})=\gamma_0$
and $\int \phi_k\,d\mu=\gamma_k$ by Lemma \ref{lem:phi on Zbeta}, so that
$$
h(\mu)+\int (t_1\phi_1+\ldots+t_m\phi_m)\,d\mu\le\gamma_0+t_1\gamma_1+\ldots+t_m\gamma_m.
$$
Since $\bgamma=(\gamma_0,\ldots,\gamma_m)\in S$, the last term is bounded by
$F(t_1,\ldots,t_m)$ by Lemma \ref{lem:ConvAnal}, proving the inequality in the statement.

For the equality in the statement, let $\bv=(\text{v}_1,\ldots,\text{v}_m)$ be any
subgradient of $F$ at $\bt=(t_1,\ldots,t_m)$. We set $\gamma_k=\text{v}_k$ for
$k=1,\ldots,m$ and $\gamma_0=F(\bt)-\bv\cdot \bt$. Then
$\bgamma=(\gamma_0,\ldots,\gamma_m)\in S$.
Let $\nu_\bt$ be the measure on $Z_{\bgamma}$
that is the product of the measure of maximal entropy on  the $\beta$-shift $X_{e^{\gamma_0}}$ and
$m+1$ Sturmian measures supported on each of the components
$Y_{\gamma_0},\ldots,Y_{\gamma_m}$ respectively and let $\mu_\bt$ be an ergodic component.
Since $\mu_\bt$ projects in the first factor onto the measure of
maximal entropy on $X_{e^{\gamma_0}}$, we see $h(\mu_\bt)\ge \gamma_0$. On the other hand,
since $\mu_\bt$ is supported on $Z_\bgamma$,
we see $h(\mu_\bt)\le h_\text{top}(Z_\bgamma)=\gamma_0$.

Each potential $\phi_k$ takes
the value $\gamma_k$ on the support of $\mu_{\bt}$, so that
\begin{align*}
  h(\mu_{\bt})+\int t_1\phi_1+\ldots+t_m\phi_m\,d\mu_{\bt} & =\gamma_0+t_1\gamma_1+\ldots+t_m\gamma_m \\
   & =F(\bt)-\bv\cdot\bt+\bv\cdot\bt \\
   & =F(\bt)
\end{align*}
as required.
\end{proof}

\begin{rem}
  We note that the above arguments remain true if the domain of $F$ is chosen to be the entire $\R^m$. It follows that on the shift space $X=\Cl(Z)$ the pressure function of the potentials $\phi_1,...,\phi_m$ coincides with $F$ for all $(t_1,...,t_m)\in \R^m$. Hence we established the following statement:
  \begin{itemize}
    \item[ ] Given a convex Lipschitz function $F:\R^m\to \R$ such that \mbox{vertical} intercepts of its supporting hyperplanes form a closed interval in $[0,\infty)$ there exists a compact symbolic system $X$ and continuous potentials $\phi_1,...,\phi_m$ on $X$ such that\\ $P(t_1\phi_1+...+t_m\phi_m)=F(t_1,...,t_m)$ for all $(t_1,...,t_m)\in \R^m$.
  \end{itemize}
   Within the class of compact symbolic systems the above result provides necessary and sufficient conditions for a function on $\R^m$ to be the pressure function of a set of continuous potentials. The focus of our investigation is the subclass of full shifts on finite alphabets, where an additional restriction on the domain of $F$ or the value of $F$ at the origin becomes necessary. We refer to  Remark \ref{rem:full shift} and Section \ref{FutureDirections} for further discussion.
\end{rem}

We have shown that for each $(t_1,\ldots,t_m)\in(\alpha,\infty)^m$,
$$
F(t_1,\ldots,t_m)=\sup_{\mu}\left\{h(\mu)+\int (t_1\phi+\ldots+t_m\phi_m)\,d\mu\right\},
$$
where $\mu$ runs over all ergodic invariant measures supported on $\Cl (Z)$.
Recall that in the Variational Principle, the pressure is attained if the supremum is taken only over
ergodic invariant measures. In order to complete the proof of the theorem it suffices to show that for all
$(t_1,\ldots,t_m)\in (\alpha,\infty)^m$ and
for each ergodic shift-invariant measure $\mu$ on $A^\Z$ such that $\mu(\Cl(Z)^c)\ne 0$,
one has
\begin{equation}\label{eq:var_pr_ineq}
  h(\mu)+\int (t_1\phi_1+\ldots+t_m\phi_m)\,d\mu< F(t_1,\ldots,t_m).
\end{equation}

To see this, we use a technique introduced by Antonioli in \cite{Antonioli} of \emph{pinning sequences}.
This is part of a more general set of ideas described in the notes \cite{Quas} on
``Coupling and Splicing''. The idea is to take a word in $A^\Z$ and partition it into maximal subwords, each of which belongs to $\mathcal L(Z)$. Our underlying image is that of a clothes line where maximal words in $\mathcal L(Z)$ are between two adjacent pins.

Formally, the \emph{pinning space} is a closed subshift $\Omega$ of $A^\Z\times\{0,1\}^\Z$ defined by the following
conditions. Let $(u,v)\in A^\Z\times\{0,1\}^\Z$. Then $(u,v)\in\Omega$ if and only if
\begin{enumerate}
\item if $i<j$ and $v_{i+1}=\ldots=v_j=0$, then $u_i\ldots u_j\in\mathcal L(Z)$; (Note: there is no
requirement that $v_i=0$).
\item if $i<j$ and $v_i=v_j=1$, then $u_i\ldots u_j\not\in\mathcal L(Z)$.
\end{enumerate}

We denote the shift map on $\Omega$ by $\bar\sigma$ to distinguish it from the shift $\sigma$ on $A^\Z$. We refer to $v$ in the pair $(u,v)\in \Omega$ as a \emph{pinning sequence} for $u\in A^\Z$.

Clearly if $u\in Z$, then $(u,\mathbf 0)\in \Omega$, where $\mathbf 0$ is the sequence of all 0's.
For a fixed $u\in A^\Z$, the set of $v$ such that $(u,v)\in \Omega$ corresponds to the set of all
\emph{greedy partitions} of $u$ into words in $\mathcal L(Z)$: each such word corresponds to a maximal string in $v$ of
the form $10\ldots0$.

In case $u\in A^\Z\setminus Z$, one may obtain a pinning sequence $v$ such that $(u,v)\in \Omega$
by a limit of greedy algorithms as follows:
for each $n$, let $k^{(n)}_0=-n$ and let $k^{(n)}_{i+1}$ be the smallest integer greater than $k^{(n)}_i$
such that $u_{k^{(n)}_i}\ldots u_{k^{(n)}_{i+1}}\not\in\mathcal L(Z)$ (where the sequence terminates if
there is no such $k^{(n)}_{i+1}$). Then define a sequence $(v^{(n)})$ by
$$
v^{(n)}_j=\begin{cases}1&\text{if $j\in \{k^{(n)}_i:i\ge 0\}$};\\0&\text{otherwise}.
\end{cases}
$$
Any subsequential limit $v$ of the $v^{(n)}$ sequences satisfies $(u,v)\in \Omega$.
In particular, $A^\Z$ is a factor of the shift on $\Omega$ by the projection onto the first coordinate.

Given an ergodic invariant measure $\mu$ on $A^\Z$, we now build a suitable lift to $\Omega$.
Denote by $\bm{\updelta}_p$ the Dirac measure supported on the point $p$.
If $\mu$ is supported on $Z$, then clearly $\bar\mu=\mu\times\bm{\updelta}_{\mathbf 0}$ is a suitable lift.
If not, let $u$ be a generic point of $A^\Z$ for $\mu$, and let $v$ be its pinning sequence so that
$(u,v)\in\Omega$. We then let $\bar\nu$ be a subsequential limit of the sequence
$\frac 1n(\bm{\updelta}_{(u,v)}+\ldots+\bm{\updelta}_{\bar\sigma^{n-1}(u,v)})$.
By the $\mu$-genericity of $u$, the projection of $\bar\nu$ onto the $A^\Z$ coordinate is $\mu$.
Since $\bar\nu$ may fail to be ergodic, we consider the ergodic components of $\bar\nu$.
By ergodicity of $\mu$, almost every ergodic component of $\bar\nu$ is supported on $\Omega$ and projects
onto $\mu$. We let $\bar\mu$ be any such ergodic component and call $\bar\mu$ a \emph{lift} of $\mu$
to $\Omega$.

If $\mu$ is an ergodic invariant measure supported on $A^\Z\setminus \Cl(Z)$, there is a word $w\not\in\mathcal
L(Z)$ such that $\mu([w])>0$. By ergodicity $\mu$-a.e.{} $u$ contains infinitely many copies of the word
$w$, so that if $(u,v)\in\Omega$, then over each occurrence of $w$ in $u$, there is at least one pin (1
in the corresponding pinning sequence, $v$). By the greedy property, a single 1 in $v$, together
with $u$ determines all subsequent terms of $v$. It follows that there are at most $|w|$ different $v$'s such that
$(u,v)\in \Omega$. In particular, the projection map from the pinning space to $A^{\Z}$ is
$\bar\mu$-almost surely finite-to-one. It follows that $h(\bar\mu)=h(\mu)$. Since $\bar\mu$ projects
to $\mu$, we see immediately that $\int f\circ \pi\,d\bar\mu=\int f\,d\mu$ for any Borel function $f$ on
$A^\Z$, where $\pi$ is the projection of $\Omega$ onto the first coordinate, $A^\Z$.

Let $P=\{(u,v)\in\Omega\colon v_0=1\}$ and let $\bar\phi_k(u,v)=\phi_k(u)$ for $k=1,\ldots,m$.
It follows from the above that to prove (\ref{eq:var_pr_ineq}) it suffices to show that for any
$(t_1,\ldots,t_m)\in (\alpha,\infty)^m$, and any ergodic measure
$\bar\mu$ on $\Omega$ such that $\bar\mu(P)>0$,
\begin{equation}\label{eq:measures_on_Omega_estimate}
  h(\bar\mu)+\int (t_1\bar\phi_1+\ldots+t_m\bar\phi_m)\,d\bar\mu< F(t_1,\ldots,t_m).
\end{equation}

Let $\bar\mu$ be an ergodic measure on $\Omega$ such that $\bar\mu(P)>0$. Let $\tau_P(u,v)=
\min\{i\ge 1: \bar\sigma^i(u,v)\in P\}$ be the first return time to $P$.
Let $\bar\sigma_P$ denote the induced map of $\bar\sigma$ on $P$ with invariant measure
$\bar\mu_P(\cdot)=\bar\mu(\cdot\cap P)/\bar\mu(P)$, i.e. $\bar\sigma_P(u,v)=\bar \sigma^{\tau(u,v)}(u,v)$.
By Abramov's formula \cite{Abramov59} the relation between the entropy of the measure $\bar\mu$ on
$(\Omega,\bar\sigma)$ and the entropy of the induced measure $\bar\mu_P$ on $(P,\bar\sigma_P)$ is
$$
h(\bar\mu_P)= \frac{1}{\bar\mu(P)}h(\bar\mu).
$$

We  introduce three countable partitions of $P$.
Let $\mathcal Q=\{Q_1,\ldots\}$ be the partition of $P$ according to the return time to $P$. Here
$$
Q_j=\{(u,v)\in P\,:\, \tau_P(u,v)=j \}.
$$
We let $\mathcal R$ to be a subpartition of $\mathcal Q$ according to the weights in each of the Sturmian
components. Precisely, given $(u,v)\in Q_j$ we write out the components of $u$ so that $(u,v)=(x,y^0,\ldots,y^m,v)$.
For each $j\in\N$ and a tuple $\bn=(n_0,\ldots,n_m)\in \Z^{m+1}$ we define
$$
R_{j,\bn}=\{(x,y^0,\ldots,y^m,v)\in Q_j\,:\,y^k_0+\ldots+y^k_{j-1}=n_k \text{ for } k=0,\ldots,m\}.
$$
We denote by $N_j$ the set of tuples $\bn$ for which the set $R_{j,\bn}$ is not empty. Then $\mathcal R$ is the partition $\{R_{j,\bn}\colon j\in\N,\ \bn\in N_j\}$.
Finally, let $\mathcal P$ denote the partition of $P$ in which each $Q_j$ is refined into cylinder sets
of length $j$. In particular, $\mathcal P$ is a generating partition under $\bar\sigma_P$; the partitions
$\mathcal Q$, $\mathcal R$ and $\mathcal P$ are successive refinements.
We introduce the notation
\begin{equation}\label{notation}
  q_j=\bar\mu_P(Q_j)\quad\text{and}\quad r_{j,\bn}=\bar\mu_P(R_{j,\bn}).
\end{equation}
We have a set of equalities which will be extensively used in what follows:
\begin{align}
\label{eq:q_j}  &\sum_{j=1}^\infty q_j=1 \\
\label{eq:r_j,n}   & \sum_{\bn\in N_j} r_{j,\bn}=q_j \\
\label{eq:Kac's_Lemma}  & \sum_{j=1}^\infty jq_j=1/\bar\mu(P)\quad\text{(Kac's lemma)}
\end{align}

First, we establish a connection between the elements of the partition $\mathcal R$ and subshifts forming $Z$.

\begin{lem}\label{lem:gammaest}
Suppose $(u,v)\in R_{j,\bn}$ with $\bn=(n_0,\ldots,n_m)$ and let $x=\pi_\beta(u)$ and
$y^k=\pi_k(u)$ for $k=0,\ldots,m$ so that $u=(x,y^0,\ldots,y^m)$.
Then there is $\bgamma\in S$ such that for each $k=0,\ldots,m$,
$\gamma_k\in (\frac{n_k-1}j,\frac{n_k+1}j)$,
$x_0\ldots x_{j-1}\in\mathcal L_j(X_{e^{\gamma_0}})$
and $y_0^k\ldots y_{j-1}^k\in \mathcal L_j(Y_{\gamma_k})$.
\end{lem}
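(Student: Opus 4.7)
The plan is a direct unpacking of the definitions of the pinning space $\Omega$ and the return-time partition $\mathcal Q$, followed by a single appeal to Lemma \ref{lem:SturmChar}. There is no significant obstacle; this lemma is essentially bookkeeping that converts membership in $R_{j,\bn}$ into information about a parameter vector $\bgamma\in S$.

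First, I will extract the relevant structure of the pinning sequence. Since $(u,v)\in R_{j,\bn}\subset Q_j\subset P$, the first return time of $(u,v)$ to $P$ under $\bar\sigma$ is exactly $j$, which forces $v_0=1$, $v_j=1$ and $v_i=0$ for $1\le i\le j-1$. Applying condition (1) in the definition of $\Omega$ with index $0$ as the left endpoint and $j-1$ as the right endpoint (using that $v_1=\ldots=v_{j-1}=0$), I conclude that $u_0u_1\ldots u_{j-1}\in\mathcal L_j(Z)$.

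Next, since $Z=\bigcup_{\bgamma\in S}Z_{\bgamma}$, we have $\mathcal L_j(Z)=\bigcup_{\bgamma\in S}\mathcal L_j(Z_{\bgamma})$, so there exists $\bgamma=(\gamma_0,\ldots,\gamma_m)\in S$ such that $u_0\ldots u_{j-1}\in\mathcal L_j(Z_{\bgamma})$. Writing out $u=(x,y^0,\ldots,y^m)$ and using the product structure $Z_{\bgamma}=X_{e^{\gamma_0}}\times Y_{\gamma_0}\times\ldots\times Y_{\gamma_m}$, this membership immediately yields
$$x_0\ldots x_{j-1}\in\mathcal L_j(X_{e^{\gamma_0}})\quad\text{and}\quad y^k_0\ldots y^k_{j-1}\in\mathcal L_j(Y_{\gamma_k})\ \text{for }k=0,\ldots,m,$$
giving two of the three conclusions of the lemma.

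Finally, I will pin down each $\gamma_k$ using the weight. By Lemma \ref{lem:SturmChar}, since $y^k_0\ldots y^k_{j-1}\in\mathcal L_j(Y_{\gamma_k})$, its weight $n_k=y^k_0+\ldots+y^k_{j-1}$ equals either $\lfloor j\gamma_k\rfloor$ or $\lceil j\gamma_k\rceil$. In either case $|n_k-j\gamma_k|<1$ (with equality to $j\gamma_k$ when $j\gamma_k\in\Z$), so dividing by $j$ gives $\gamma_k\in\bigl(\tfrac{n_k-1}{j},\tfrac{n_k+1}{j}\bigr)$, completing the proof.
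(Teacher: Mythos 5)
Your proof is correct and follows essentially the same route as the paper's: extract $u_0\ldots u_{j-1}\in\mathcal L_j(Z)$, locate a $\bgamma\in S$ with $u_0\ldots u_{j-1}\in\mathcal L_j(Z_{\bgamma})$, and apply Lemma \ref{lem:SturmChar} to the weights to trap each $\gamma_k$. The only difference is that you spell out why the return-time structure of the pinning sequence forces $u_0\ldots u_{j-1}\in\mathcal L(Z)$, a step the paper takes for granted.
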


\begin{proof}
Let $(u,v)\in R_{j,\bn}$, $x=\pi_\beta(u)$ and $y^k=\pi_k(u)$ for $k=0,1,\ldots,m$.
Since $u\in \mathcal L_j(Z)$, it follows that $u\in \mathcal L_j(Z_{\bgamma})$
for some $\bgamma=(\gamma_0,\ldots,\gamma_m)\in S$. This implies that
$x_0\ldots x_{j-1}\in \mathcal L_j(X_{e^{\gamma_0}})$ and
$y^k_0\ldots y^k_{j-1}\in\mathcal L_j(Y_{\gamma_k})$ for $0\le k\le m$. It remains to show that
$\gamma_k\in (\frac{n_k-1}j,\frac{n_k+1}j)$ where $\bn=(n_0,\ldots,n_m)$.

Since $(x,y^0,\ldots,y^m,v)\in R_{j,\bn}$ for each $k=0,\ldots,m$ we have
$n_k=y^k_0+\ldots+y^k_{j-1}$. On the other hand,
$y^k_0\ldots y^k_{j-1}\in\mathcal L_j(Y_{\gamma_k})$ implies by Lemma
\ref{lem:SturmChar} that $\lfloor j\gamma_k\rfloor\le n_k \le \lceil j\gamma_k\rceil$
so that for each $0\le k\le m$, $j\gamma_k\in(n_k-1,n_k+1)$ as required.
\end{proof}

Next we obtain an upper bound on the entropy of the measure $\bar\mu$ on $\Omega$ via the
entropy of the corresponding induced measure $\bar \mu_P$. We use the notation introduced in (\ref{notation}).

\begin{lem}\label{lem:entropy_est}
Suppose $\bar\mu$ is an ergodic invariant measure on $\Omega$ such that $\bar\mu(P)>0$. Then
$$
h(\bar\mu_P)\le c+6+(2L+2)m +(3m+6)\sum_{j=1}^\infty q_j\log j+\sum_{j=1}^\infty\sum_{\bn\in N_j}n_0r_{j,\bn}.
$$
\end{lem}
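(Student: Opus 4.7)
The plan is to exploit that $\mathcal{P}$ is a generating partition for $\bar\sigma_P$, so $h(\bar\mu_P)\le H_{\bar\mu_P}(\mathcal{P})$, and then to decompose the right-hand side along the refinement tower $\mathcal{Q}\subseteq\mathcal{R}\subseteq\mathcal{P}$ via the chain rule
$$H(\mathcal{P})=H(\mathcal{Q})+H(\mathcal{R}\mid\mathcal{Q})+H(\mathcal{P}\mid\mathcal{R}).$$
The coefficient $3m+6$ of $\sum q_j\log j$ in the target will come as $2+(m+1)+(2m+3)$, one contribution from each piece. The constant $\sum n_0 r_{j,\bn}$ will arise only from $H(\mathcal{P}\mid\mathcal{R})$, where it accounts for the $\beta^j=e^{n_0+1}$ growth of the count of length-$j$ words in the $\beta$-shift with $\beta=e^{(n_0+1)/j}$.

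For $H(\mathcal{P}\mid\mathcal{R})$ I count cells inside a given $R_{j,\bn}$. By Lemma~\ref{lem:gammaest}, any such cell has $x_0\ldots x_{j-1}\in\mathcal{L}_j(X_{e^{\gamma_0}})$ with $\gamma_0<(n_0+1)/j$, and $y^k_0\ldots y^k_{j-1}$ Sturmian of weight $n_k$ for $k=0,\ldots,m$. Nestedness of $\beta$-shifts together with Lemma~\ref{lem:betacount} applied to $\beta=e^{(n_0+1)/j}$, combined with the elementary inequality $\beta-1\ge\log\beta=(n_0+1)/j$, give $|\mathcal{L}_j(X_\beta)|\le (j+1)e^{n_0+1}$. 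Each of the $m+1$ Sturmian factors contributes at most $j(j+1)$ words by Lemma~\ref{lem:Sturm}. Taking logarithms and using $\log(j+1)\le\log j+1$ produces a per-cell bound of the form $(2m+3)\log j+n_0+O(m)$, which after integration against $\bar\mu_P$ yields $H(\mathcal{P}\mid\mathcal{R})\le (2m+3)\sum q_j\log j+\sum n_0 r_{j,\bn}+O(m)$.

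For $H(\mathcal{R}\mid\mathcal{Q})$ I count admissible weight vectors $\bn$. Using $\gamma_0\in[b,c]\subset[0,c]$ and $\gamma_k\in[-L,L]$ for $k\ge 1$, together with $n_k\in\{\lfloor j\gamma_k\rfloor,\lceil j\gamma_k\rceil\}$, the number of possible $\bn\in N_j$ is bounded by $(jc+3)(2jL+3)^m$, so $H(\mathcal{R}\mid\mathcal{Q})\le(m+1)\sum q_j\log j+\log(c+3)+m\log(2L+3)$. For $H(\mathcal{Q})$ I apply the Gibbs/Kullback inequality with reference distribution $1/(j(j+1))$ (which sums to $1$), giving $H(\mathcal{Q})\le\sum q_j\log(j(j+1))\le 2\sum q_j\log j+O(1)$. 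Finally I sum the three pieces and replace $\log(c+3)$ and $\log(2L+3)$ by their linear majorants: since $\log(x+3)-x$ attains its maximum on $[0,\infty)$ at $x=0$ with value $\log 3<2$, one has $\log(c+3)\le c+2$ and $\log(2L+3)\le 2L+2$.

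The main obstacle is not conceptual but bookkeeping: the additive constants from the three pieces, together with $\log 3$-type slack, must be trimmed precisely to land on the clean constants $6$ and $2L+2$ stated in the lemma. This requires using the tightest available versions of each step simultaneously—namely $|\mathcal{L}_j(X_\beta)|\le(j+1)e^{n_0+1}$ rather than the cruder $\frac\beta{\beta-1}\beta^j$, the inequality $\log(j(j+1))\le 2\log j+\log 2$ rather than $2\log j+2$, and the sharp linear majorants above—and combining them carefully.
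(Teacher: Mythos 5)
Your proposal follows the paper's proof essentially step for step: the same bound $h(\bar\mu_P)\le H_{\bar\mu_P}(\mathcal P)$, the same chain-rule decomposition $H(\mathcal P)=H(\mathcal Q)+H(\mathcal R\mid\mathcal Q)+H(\mathcal P\mid\mathcal R)$, the same counting inputs from Lemmas \ref{lem:gammaest}, \ref{lem:betacount} and \ref{lem:Sturm}, and the same attribution of the coefficient $3m+6=2+(m+1)+(2m+3)$ and of the term $\sum\sum n_0r_{j,\bn}$ to the $\beta$-shift factor. Your only real departure is the treatment of $H(\mathcal Q)$ by the Gibbs inequality against the reference distribution $1/(j(j+1))$, which is a cleaner route to $2\sum q_j\log j+O(1)$ than the paper's case split on $q_j\lessgtr 1/j^2$ (and gives $\log 2$ in place of the paper's $3$). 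One caution on the bookkeeping you yourself flag as the crux: with the majorants you actually name, namely $\log(c+3)\le c+2$ and $\log(2L+3)\le 2L+2$, the assembled additive constant is $c+(2L+2)m+3+(m+3)\log 2$, which exceeds the stated $c+6+(2L+2)m$ as soon as $m\ge 2$; if instead you keep $\log 3$ (i.e.\ $\log(c+3)\le c+\log 3$ and $\log(2L+3)\le 2L+\log 3$), the total becomes $c+2Lm+m\log 6+3\log 2+\log 3+1$, which is below the target for every $m\ge 1$, so the argument closes.
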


\begin{proof}
Recall that $\mathcal P$ is a generating partition for $\bar\sigma_P$ and hence $$h(\bar\mu_P)=\inf\frac1n H_{\bar\mu_P}\left(\bigvee_{i=0}^{n-1}\bar\sigma_P^{-i}(\mathcal P)\right),$$
where $H_{\bar\mu_P}(.)$ is the entropy of the partition with respect to the measure $\bar\mu_P$, see e.g. \cite{Ke}.
Since $H_{\bar\mu_P}(\bigvee_{i=0}^{n-1}\bar\sigma_P^{-i}(\mathcal P))\le n H_{\bar\mu_P}(\mathcal P)$, we obtain the simple bound $h(\bar\mu_P)\le H_{\bar\mu_P}(\mathcal P)$.
We then estimate $H_{\bar\mu_P}(\mathcal P)$ using conditional entropy: $$H_{\bar\mu_P}(\mathcal P)=
H_{\bar\mu_P}(\mathcal Q)+H_{\bar\mu_P}(\mathcal R|\mathcal Q)+
H_{\bar\mu_P}(\mathcal P|\mathcal R).$$

We have $H_{\bar\mu_P}(\mathcal Q)=\sum_{j=1}^\infty -q_j\log q_j$, which we separate into two parts as
$$\sum_{j=1}^\infty -q_j\log q_j=-\sum_{q_j<1/j^2}q_j\log q_j-\sum_{q_j\ge 1/j^2}q_j\log q_j.$$
The first term is at most $\frac 1e-\sum_{j=2}^\infty \frac{1}{j^2}\log \frac{1}{j^2}$
(which we obtained using the fact that $-t\log t$ is increasing on $[0,\frac 1e]$ and bounded
above by $\frac 1e$ on $[0,1]$). The second term is bounded above by $2\sum_{j=1}^\infty q_j\log j$, so that
$$
H_{\bar\mu_P}(\mathcal Q)\le 3+2\sum_{j=1}^\infty q_j\log j.
$$

We now turn to $H_{\bar\mu_P}(\mathcal R|\mathcal Q)$, which is given by
$$
H_{\bar\mu_P}(\mathcal R|\mathcal Q)=\sum_{Q\in\mathcal Q}\bar\mu_P(Q)
\left(-\sum_{R\in\mathcal R}\frac{\bar\mu(R\cap Q)}{\bar\mu_P(Q)}
\log \frac{\bar\mu(R\cap Q)}{\bar\mu_P(Q)}\right).
$$
We bound the term in parentheses by the logarithm of the number of elements in
$\mathcal R$ into which the set $Q$ is partitioned. Recall that $\mathcal Q=\{Q_j\}_{j\in\N}$ and each
$Q_j=\bigcup_{\bn\in N_j}R_{j,\bn}$. If $\bn=(n_0,\ldots,n_m)\in N_j$ then $R_{j,\bn}$
is not empty and hence by Lemma \ref{lem:gammaest} there is
$\bgamma=(\gamma_0,\ldots,\gamma_m)\in S$ such that
$\lfloor j\gamma_k\rfloor\le n_k \le \lceil j\gamma_k\rceil$ for each $k=0,\ldots,m$.
Since $S\subset [b,c]\times [-L,L]^m$, there are at most
$\lceil c\rceil j\cdot(2\lceil L\rceil+1)^mj^m$ tuples $\bn$ in $N_j$. Now using
(\ref{notation}), (\ref{eq:q_j}) and the fact that $\log \lceil c\rceil \le c$ we obtain
\begin{align*}
  H_{\bar\mu_P}(\mathcal R|\mathcal Q)& \le \sum_{j=1}^{\infty}
  q_j\log\left( \lceil c\rceil(2\lceil L\rceil+1)^mj^{m+1} \right)\\
  & \le  \sum_{j=1}^{\infty}q_j(c+(2L+1)m+(m+1)\log j)\\
  & = c+(2L+1)m+(m+1)\sum_{j=1}^{\infty}q_j\log j
\end{align*}

Finally, we estimate $H_{\bar\mu_P}(\mathcal P|\mathcal R)$. Similar to the above,
we use a crude bound via the number of $j$-cylinders forming each $R_{j,\bn}\in \mathcal R$.
Recall that each element of $\mathcal P$ is a cylinder set generated by an element of
$\mathcal L(Z)$. We separately estimate the number of projections of those words forming
$R_{j,\bn}$ onto each coordinate. Suppose $(u,v)\in R_{j,\bn}$. Write $\bn=(n_0,\ldots,n_m)$,
$x=\pi_\beta(u)$ and $y^k=\pi_k(u)$ for $0\le k\le m$.

There are at most $j(j+1)\le 2j^2$ choices for $y^k_0\ldots y^k_{j-1}$ by Lemma
\ref{lem:Sturm}, since each such Sturmian word must have the same weight $n_k$.
By Lemma \ref{lem:gammaest}, and using the fact that $X_\beta\subset X_{\beta'}$
if $\beta<\beta'$, we see $x_0\ldots x_{j-1}\in X_{e^{(n_0+1)/j}}$. By Lemma
\ref{lem:betacount}, the number of such choices of $x_0\ldots x_{j-1}$ is at most
\begin{align*}
\frac{e^{(n_0+1)/j}}{e^{(n_0+1)/j}-1}e^{n_0+1}&=
\frac{1}{1-e^{-(n_0+1)/j}}e^{n_0+1}\\
&\le \frac{1}{1-e^{-1/j}}e^{n_0+1}\le je^2e^{n_0},
\end{align*}
using the fact that $1/(1-e^{-1/j})\le ej$.

Multiplying the estimates, we see that the number of $j$-cylinders making up each
$R_{j,\bn}$ is at most $(2j^2)^{m+1} je^2e^{n_0}$. Therefore,
\begin{align*}
  H_{\bar\mu_P}(\mathcal P|\mathcal R)
  & \le \sum_{R\in\mathcal R}\bar \mu_P(R)\log\left(2^{m+1}j^{2m+3}e^2e^{n_0}\right) \\
  & = \sum_{j=1}^{\infty}\sum_{\bn\in N_j}r_{j,\bn}\big((m+1)\log 2 +2+(2m+3)\log j+n_0\big) \\
  & \le \sum_{j=1}^{\infty} q_j(m +3)+(2m+3)\sum_{j=1}^{\infty}q_j\log j
  +\sum_{j=1}^{\infty}\sum_{\bn\in N_j}n_0r_{j,\bn}\\
  & = m+3+(2m+3)\sum_{j=1}^{\infty}q_j\log j+\sum_{j=1}^{\infty}\sum_{\bn\in N_j}n_0r_{j,\bn},
\end{align*}
where we used (\ref{eq:r_j,n}) in the second line and (\ref{eq:q_j}) in the third.
Combining the above estimates we establish that
$$
H_{\bar\mu_P}(\mathcal P)\le c+6+(2L+2)m +(3m+6)\sum_{j=1}^\infty q_j\log j
+\sum_{j=1}^\infty\sum_{\bn\in N_j}n_0r_{j,\bn}.
$$
\end{proof}

Lastly, we estimate $\int\phi_k\,d\mu$ for $0\le k \le m$. We define a version of
$\bar\phi_k$ on the induced system by
$$
\bar\phi^P_k(u,v)=\sum_{i=0}^{\tau_P(u,v)-1}\phi_k(\sigma^i u),
$$
so that $\int\bar\phi_k^P\,d\bar\mu_P=\frac{1}{\bar\mu(P)}\int\bar\phi_k\,d\bar\mu=\frac{1}{\bar\mu(P)}
\int\phi_k\,d\mu$. We continue using the notation from (\ref{notation}).

\begin{lem}\label{lem:int_est}
Suppose $\bar\mu$ is an ergodic invariant measure on $\Omega$ such that $\bar\mu(P)>0$.
Then for $k=1,\ldots,m$ we have
$$
\int \bar\phi_k^P\,d\bar\mu_P \le 3+\sum_{j=1}^\infty\sum_{\bn\in N_j} n_k r_{j,\bn}
-\sum_{j=1}^\infty jq_j\delta_j,
$$
where $\delta_j$ is defined as in (\ref{eq:delta_j}).
\end{lem}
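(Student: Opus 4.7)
The plan is to establish a pointwise upper bound on each atom $R_{j,\bn}$ of the partition $\mathcal R$ and then integrate and sum. Decomposing,
\[
\int\bar\phi_k^P\,d\bar\mu_P=\sum_{j\ge 1}\sum_{\bn\in N_j}\int_{R_{j,\bn}}\sum_{i=0}^{j-1}\phi_k(\sigma^i u)\,d\bar\mu_P(u,v),
\]
so it suffices to show the pointwise estimate
\[
\sum_{i=0}^{j-1}\phi_k(\sigma^i u)\le n_k + 3 - j\delta_j \qquad\text{for } (u,v)\in R_{j,\bn}.
\]
Integrating this and using $\sum_{\bn\in N_j}r_{j,\bn}=q_j$ together with $\sum_j q_j=1$ will then yield the claimed inequality, with the constant $3$ arising from the integration of the constant term.

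To prove the pointwise bound, I will analyze $\phi_k(\sigma^i u)=\sup_{\bgamma'\in S}(\gamma'_k-\delta_{j_{\bgamma'}(\sigma^i u)})$ using two constraints. First, since $v_0=v_j=1$ and $\tau_P(u,v)=j$, condition (2) in the definition of $\Omega$ forces $u_0\ldots u_j\not\in\mathcal L(Z)\supset\mathcal L(Z_{\bgamma'})$. Consequently any $l=j_{\bgamma'}(\sigma^i u)\ge 1$ must satisfy $l\le\max(i,j-i)\le j$, since otherwise the window $u_{i-(l-1)}\ldots u_{i+(l-1)}\in\mathcal L(Z_{\bgamma'})$ would contain $u_0\ldots u_j$ as a sub-word. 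In particular $\delta_l\ge\delta_j$. Second, by Lemma \ref{lem:gammaest} there is $\bgamma\in S$ with $u_0\ldots u_{j-1}\in\mathcal L_j(Z_\bgamma)$, so applying Lemma \ref{lem:SturmChar} to the $k$-th Sturmian factor controls the weight of the window by $(2l-1)\gamma'_k$ up to one, while the sub-word of the window lying inside $[0,j-1]$ is controlled by the corresponding multiple of $\gamma_k$. Combining these bounds (and treating the out-of-block portion as a sub-word of the window in $\mathcal L(Y_{\gamma'_k})$) will yield $|\gamma'_k-\gamma_k|<3/l$.

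Hence $\phi_k(\sigma^i u)\le\gamma_k+3/l_i-\delta_{l_i}$ for the sup-achieving $l_i\le j$. The explicit form $\delta_l=(c+2L+14+9\log l)/(l\min\{\alpha,1\})$ is engineered so that $l\mapsto 3/l-\delta_l$ is increasing on $[1,\infty)$, giving $3/l-\delta_l\le 3/j-\delta_j$ for $l\le j$. Summing over $i=0,\ldots,j-1$ and invoking $j\gamma_k<n_k+1$ from Lemma \ref{lem:gammaest} will complete the pointwise bound.

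The main obstacle will be handling the boundary case of the Sturmian comparison, when the window $[i-(l-1),i+(l-1)]$ extends outside $[0,j-1]$. Here the Sturmian slope of the ``out'' portion is $\gamma'_k$ rather than $\gamma_k$, so one cannot immediately identify the window weight with $(2l-1)\gamma_k$. The fix is to use three Sturmian weight bounds simultaneously (the full window at slope $\gamma'_k$, the in-portion at slope $\gamma_k$, and the out-portion at slope $\gamma'_k$) and combine the resulting one-sided inequalities to extract $|\gamma'_k-\gamma_k|<3/L_{\mathrm{in}}\le 3/l$. Tracking the accumulated $\pm 1$ slack terms through this combination is what produces the constant $3$ in the final estimate.
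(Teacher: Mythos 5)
Your overall strategy is the same as the paper's: reduce to the pointwise bound $\sum_{i=0}^{j-1}\phi_k(\sigma^iu)\le n_k+3-j\delta_j$ on each $R_{j,\bn}$, use the pin at positions $0$ and $j$ to force $l=j_{\bgamma'}(\sigma^iu)\le j$, compare $\gamma'_k$ to the $\gamma_k$ supplied by Lemma~\ref{lem:gammaest} via Sturmian weights, and exploit the monotonicity of $l\mapsto C/l-\delta_l$ built into \eqref{eq:delta_j}. (The paper organizes the comparison as a case analysis on the location of $\gamma_k$ relative to $\gamma'_k$, bounding $l$ in terms of the gap; you invert this and bound the gap in terms of $l$. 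These are equivalent.)

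There is, however, one concrete discrepancy: your chain of inequalities does not produce the stated constant. You get $|\gamma'_k-\gamma_k|<3/l$, hence $\phi_k(\sigma^iu)\le\gamma_k+3/j-\delta_j$, and then $j\gamma_k<n_k+1$ gives $\sum_i\phi_k(\sigma^iu)<n_k+4-j\delta_j$, not $n_k+3-j\delta_j$. The extra $1$ comes from your three-term weight comparison (full window, in-portion, out-portion), which accumulates three $\pm1$ errors. This is unnecessary: the in-portion $W\cap[0,j-1]$, of length $L_{\mathrm{in}}\ge l$, is simultaneously a subword of the window (hence in $\mathcal L_{L_{\mathrm{in}}}(Y_{\gamma'_k})$) and of $y^k_0\ldots y^k_{j-1}$ (hence in $\mathcal L_{L_{\mathrm{in}}}(Y_{\gamma_k})$), so by Lemma~\ref{lem:SturmChar} its weight lies within $1$ of both $L_{\mathrm{in}}\gamma'_k$ and $L_{\mathrm{in}}\gamma_k$, giving $|\gamma'_k-\gamma_k|<2/L_{\mathrm{in}}\le 2/l$ with no reference to the out-portion. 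With $2/l$ in place of $3/l$ (and $2/l-\delta_l$ is still increasing), you recover $(n_k+1)/j+2/j-\delta_j=(n_k+3)/j-\delta_j$ per term and the lemma as stated. You should also dispose of the case $j_{\bgamma'}(\sigma^iu)=0$, where no window exists to compare: there $\phi_{k,\bgamma'}(\sigma^iu)=\gamma'_k-\delta_0\le\gamma_k+2L-\delta_1-2L\le\gamma_k-\delta_j$, which is within the same bound.
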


\begin{proof}
We fix $1\le k\le m$ and estimate $\bar\phi_k^P(u,v)$ for $(u,v)\in R_{j,\bn}$.
From the construction of $R_{j,\bn}$ we know that
$u_0\ldots u_j\not\in\mathcal L(Z)$. It follows that whenever $\bgamma=(\gamma_0,\ldots,\gamma_m)\in S$
$$\phi_{k,\bgamma}(\sigma^i u)\le \gamma_k-\delta_j$$
for each $i=0,\ldots,j-1$.

On the other hand, using Lemma \ref{lem:gammaest} we can find $\bgamma'\in S$
such that $u_0\ldots u_{j-1}\in\mathcal L_j(Z_{\bgamma'})$ and
$\gamma'_k\in\left(\frac{n_k-1}{j},\frac{n_k+1}{j}\right)$. Consider any other
$\bgamma=(\gamma_0,\ldots,\gamma_m)\in S$. We have the following possibilities
for the location of $\gamma_k$ with respect to $\gamma_k'$: $\gamma_k$ is
less than $\gamma_k'+\frac{2}{j}$; $\gamma_k$ is in one of the intervals
$\left[\gamma'_k+\frac{2}{\ell},\gamma_k'+\frac{2}{\ell-1}\right)$ where
$\ell\in\{2,\ldots,j\}$; or $\gamma_k$ is at least $\gamma_k'+2$.

If $\gamma_k <\gamma_k'+\frac{2}{j}$ then  we see that
$\gamma_k<\frac{n_k}{j}+\frac{3}{j}$ since
$\gamma'_k\in\left(\frac{n_k-1}{j},\frac{n_k+1}{j}\right)$. Therefore,
$$
\phi_{k,\bgamma}(\sigma^i u)\le \gamma_k-\delta_j<\frac{n_k}{j}+\frac{3}{j}-\delta_j.
$$

Now suppose that $\gamma_k\in \left[\gamma'_k+\frac{2}{\ell},\gamma_k'+\frac{2}{\ell-1}\right)$
for some $\ell\in \{2,\ldots,j\}$. We claim that $\mathcal L_\ell(Y_{\gamma_k})$ and $\mathcal L_\ell(Y_{\gamma_k'})$ are disjoint.
To see this, note that any element of $\mathcal L_\ell(Y_{\gamma_k})$ has weight at least $\lfloor \ell\gamma_k\rfloor$
while any element of $\mathcal L_\ell(Y_{\gamma_k'})$ has weight at most $\lceil \ell\gamma_k'\rceil$.
Since $\gamma_k\ge \gamma_k'+\frac 2\ell$. it follows that $\ell\gamma_k\ge \ell\gamma_k'+2$, so that
$\lfloor \ell\gamma_k\rfloor > \lceil \ell\gamma_k'\rceil$.
It follows that
$\phi_{k,\bgamma}(\sigma^iu)\le\gamma_k-\delta_{\ell-1}$ for each $i=0,\ldots,j-1$.
Using that $\gamma_k\in \left[\gamma'_k+\frac{2}{\ell},\gamma_k'+\frac{2}{\ell-1}\right)$
and that $\left(\frac{2}{j}-\delta_j\right)$ is an increasing sequence we obtain
$$
\phi_{k,\bgamma}(\sigma^iu)\le\gamma_k-\delta_{\ell-1}\le \gamma'_k+\frac{2}{\ell-1}-\delta_{\ell-1}<\frac{n_k+1}{j}+\frac{2}{j}-\delta_j.
$$

Finally, let $\gamma_k\ge \gamma'_k+2$. Since in this case
$\lfloor\ell\gamma_k+a\rfloor>\lfloor \ell \gamma'_k+a' \rfloor$ for all $a,a'\in [0,1)$
we see that $y^k_i\notin\mathcal L_1(Y_{\gamma_k})$ for $i=0,\ldots,j-1$. Hence,
$$
\phi_{k,\bgamma}(\sigma^iu)\le\gamma_k-\delta_{0}\le 2L+\gamma'_k-\delta_0< \frac{n_k+1}{j}-\delta_j,
$$
since $\delta_0-2L\ge \delta_1>\delta_j$ be definition.

We have shown that for all $\bgamma\in S$, for all $(u,v)\in R_{j,\bn}$ and all
$i=0,\ldots,j-1$ we have $\phi_{k,\bgamma}(\sigma^iu)\le \frac{n_k}{j}+\frac{3}{j}-\delta_j$.
Hence, $\phi_k(\sigma^i u)=\sup_{\bgamma\in S}\phi_{k,\bgamma}(\sigma^iu)
\le \frac{n_k}{j}+\frac{3}{j}-\delta_j $ and
$$
\bar\phi^P_k(u,v)=\sum_{i=0}^{j-1}\phi_k(\sigma^i u)
\le j\left(\frac{n_k}{j}+\frac{3}{j}-\delta_j \right)=n_k+3-j\delta_j.
$$
Now integrating and applying (\ref{eq:r_j,n}), we see
\begin{align*}
  \int \bar\phi_k^P\,d\bar\mu_P & \le \sum_{j=1}^\infty\sum_{\bn\in N_j}(n_k+3-j\delta_j)\bar\mu_P(R_{j,\bn})\\
  & = 3+\sum_{j=1}^\infty\sum_{\bn\in N_j} n_k r_{j,\bn}-\sum_{j=1}^\infty jq_j\delta_j,
\end{align*}
as required.
\end{proof}

We are now ready to establish (\ref{eq:var_pr_ineq}). Fix the values of the parameters
$(t_1,...,t_m)\in (\alpha,\infty)^m$. Suppose $\mu$ is an ergodic $\sigma$-invariant measure on $A^\Z$
whose support is not contained in $\Cl(Z)$. Then its lift $\bar \mu$ is an ergodic $\bar\sigma$-invariant measure
on $\Omega$ such that $\bar \mu(P)>0$ and we can induce on $P$. Combining the estimates in
Lemma \ref{lem:entropy_est} and Lemma \ref{lem:int_est}, we see that
\begin{equation}\label{eq:star}
\begin{split}
  &h(\bar\mu_P)+\int(t_1\bar\phi_1^P+\ldots+t_m\bar\phi_m^P)\,d\bar\mu_P  \\
  &\le c+6+(2L+2)m +(3m+6)\sum_{j=1}^\infty q_j\log j+\sum_{j=1}^\infty\sum_{\bn\in N_j}n_0r_{j,\bn}  \\
  &+\sum_{j=1}^\infty\sum_{\bn\in N_j} (t_1n_1+\ldots+t_m n_m) r_{j,\bn}+(t_1+\ldots+t_m)\left(3
  -\sum_{j=1}^\infty jq_j\delta_j \right).
  \end{split}
\end{equation}

We estimate the terms containing $r_{j,\bn}$ first. Let $j \in \N$ and let $\bn=(n_0,...,n_m) \in N_j$.
Since the set $R_{j,\bn}$ is not empty, by Lemma \ref{lem:gammaest} for each such $j$ and $\bn$ we can find some
$\bgamma=(\gamma_0,...,\gamma_m)\in S$ satisfying $n_k<j\gamma_k+1$ for $k=0,\ldots,m$. Therefore,
$$
n_0+t_1n_1+\ldots+t_mn_m\le j(\gamma_0+t_1\gamma_1+\ldots+t_m\gamma_n)+1+t_1+\ldots+t_m.
$$
Since $\bgamma\in S$, Lemma \ref{lem:ConvAnal} implies that
$\gamma_0+t_1\gamma_1+\ldots+t_m\gamma_n\le F(t_1,\ldots,t_m)$. Writing $\bt=(t_1,...,t_m)$ and using (\ref{eq:r_j,n}) we get
\begin{align*}
\sum_{\bn\in N_j} (n_0+t_1 n_1\ldots+t_m n_m) r_{j,\bn}&\le \sum_{\bn\in N_j} (jF(\bt)+1+t_1+\ldots+t_m) r_{j,\bn}\\
  &=[jF(\bt)+1+t_1+\ldots+t_m]q_j.
\end{align*}
Recall from (\ref{eq:q_j}) and (\ref{eq:Kac's_Lemma}) that $\sum_j q_j=1$ and $\sum_j jq_j=\frac{1}{\bar\mu (P)}$. Hence, summing over $j$ gives
\begin{align*}
  \sum_{j=1}^\infty\sum_{\bn\in N_j} (n_0+t_1 n_1\ldots+t_m n_m) r_{j,\bn}&\le \sum_{j=1}^\infty[jF(\bt)+1+t_1+\ldots+t_m]q_j \\
    &=\frac{F(\bt)}{\bar{\mu}(P)}+1+t_1+\ldots+t_m.
\end{align*}
Substituting the bound we just obtained for the terms containing $r_{j,\bn}$ into (\ref{eq:star}) and applying $\sum q_j=1$ in the last line we get
\begin{align*}
&h(\bar\mu_P)+\int(t_1\bar\phi_1^P+\ldots+t_m\bar\phi_m^P)\,d\bar\mu_P\\
&\le(c+2L+9)m+9m\sum_{j=1}^\infty q_j\log j
+(t_1+\ldots+t_m)\left(4-\sum_{j=1}^\infty jq_j\delta_j\right)\\
&\hspace{11.1cm}+\frac{F(\bt)}{\bar\mu(P)}\\
&=\sum_{j=1}^\infty q_j\big[(c+2L+9)m+9m\log j-(t_1+\ldots+t_m)(j\delta_j-4)\big]+\frac{F(\bt)}{\bar\mu(P)}.
\end{align*}
Since
$$
j\delta_j>\frac{c+2L+9+9\log j}{\alpha}+4
$$
and $t_1+...+t_m>m\alpha$ we observe that the bracketed expression is negative.
This gives
$$
h(\bar\mu_P)+\int(t_1\bar\phi_1^P+\ldots+t_m\bar\phi_m^P)\,d\bar\mu_P
<\frac{F(t_1,\ldots,t_m)}{\bar\mu(P)}.
$$

As mentioned above $h(\mu)=h(\bar\mu)$; Abramov's formula then implies that
$h(\mu)=\bar\mu(P)h(\bar\mu_P)$. Also, $\int\phi_k\,d\mu=\bar\mu(P)\int\bar\phi_k^P\,d\bar\mu_P$
so that
$$
h(\mu)+\int (t_1\phi_1+\ldots+t_m\phi_m)\,d\mu< F(t_1,\ldots,t_m),
$$
as required. This completes the proof of Theorem \ref{thm:main}.

\begin{rem}\label{rem:full shift}
Note that allowing $\alpha$ to be zero in the statement of the theorem makes it false. Any pressure function
on a full shift $\Sigma$ intercepts the vertical axis at $h_{\rm top} (\Sigma)$. Hence, if the value of $F$ at the origin
is not equal to the logarithm of an integer greater than 1, $F$ cannot be a pressure function on $[0,\infty)^m$ for any full shift.
However, for any $\alpha>0$  we can still match $F$ to a pressure function on $(\alpha,\infty)^m$.

The spirit of the proof is that each potential $\phi_j$ is defined to be $\gamma_j$ minus a ``cost". If $\phi_j$ is multiplied by a variable $t_j$, we require that the scaled cost be sufficiently large to ensure that invariant measures not supported on $Z$ do not achieve the desired pressure. This only works if $t_j$ is bounded below. This constraint manifests itself in the proof above where $\alpha$ appears in the denominator of (\ref{eq:delta_j}).

\end{rem}

\section{One-parameter pressure function}\label{sec:OneParameterPressure}

Of particular interest is a one-parameter pressure function $t\mapsto P(t\phi)$ since then $t$ can be
interpreted as the inverse temperature of the system. It follows immediately from the variational principle
that the pressure function is convex, Lipschitz and asymptotically linear at infinity. As a consequence of
Theorem \ref{thm:main} we see that these are the only restrictions. Furthermore, in the one-parameter
situation boundedness of the vertical axis intercepts of the supporting lines implies both the Lipschitz
condition and the existence of a slant asymptote. Indeed, we have

\begin{lem}\label{lem:main_1D}
  Let $\alpha>0$ and let $f(t)$ be a convex function on $(\alpha,\infty)$ such that the support lines to $f$ at
each $t\in(\alpha,\infty)$ have vertical axis intercepts in a closed interval $[b,c]\subset [0,\infty)$.
Then $f(t)$ is Lipschitz and has a slant asymptote.
\end{lem}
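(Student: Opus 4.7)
The plan is to bound the set of support slopes of $f$ using the hypothesis on vertical intercepts, and then invoke the standard fact that a convex function on an open interval whose subdifferential is uniformly bounded is Lipschitz with constant equal to that bound.

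Fix any reference point $s_0\in(\alpha,\infty)$ and pick any $v_0\in\partial f(s_0)$, which is nonempty since $f$ is convex on an open interval. Let $h_0=f(s_0)-v_0 s_0\in[b,c]$ be the corresponding vertical intercept. For an arbitrary $s\in(\alpha,\infty)$ and any $v\in\partial f(s)$, let $h=f(s)-vs\in[b,c]$. The key step is to apply the subgradient inequality in both directions. Using $v_0\in\partial f(s_0)$ at the point $s$ gives $f(s)\ge f(s_0)+v_0(s-s_0)$, which rearranges to
\[
(v-v_0)s\ \ge\ h_0-h.
\]
Symmetrically, $v\in\partial f(s)$ evaluated at $s_0$ yields $f(s_0)\ge f(s)+v(s_0-s)$, which gives
\[
(v-v_0)s_0\ \le\ h_0-h.
\]
Since $s,s_0>\alpha>0$ and $h_0-h\in[b-c,c-b]$, these two inequalities together force
\[
|v-v_0|\ \le\ \frac{|h_0-h|}{\alpha}\ \le\ \frac{c-b}{\alpha}.
\]

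Therefore every subgradient of $f$ lies in the interval $[v_0-(c-b)/\alpha,\,v_0+(c-b)/\alpha]$, so the set $\bigcup_{s\in(\alpha,\infty)}\partial f(s)$ is bounded by $L:=|v_0|+(c-b)/\alpha$. A standard convex analysis fact (which follows directly from the subgradient inequality $f(t)\ge f(s)+v(t-s)$ applied in both directions for any $s,t$ and any $v\in\partial f(s)$) then yields $|f(t)-f(s)|\le L|t-s|$ for all $s,t\in(\alpha,\infty)$, so $f$ is Lipschitz.

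There is no real obstacle here; the only thing to watch is the direction of the inequalities when dividing by $s$ versus $s_0$, which is precisely where the positive lower bound $\alpha$ on the domain is used — and this is also where one sees why the lemma would fail if $\alpha$ were allowed to be $0$, since the intercept spread $c-b$ would then have to be divided by arbitrarily small numbers.
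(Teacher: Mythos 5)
Your proof is correct, and it follows the same overall strategy as the paper: use the intercept constraint together with the subgradient inequality to show that the subdifferential of $f$ is uniformly bounded, and then invoke the standard fact that a convex function with bounded subgradients is Lipschitz. The one difference is in how the slope bound is obtained: the paper argues asymmetrically, getting an upper bound on slopes by evaluating a support line at a fixed point $\beta>\alpha$ and a lower bound via monotonicity of the subdifferential plus a limit argument at $\alpha^+$, whereas your symmetric two-point comparison $(v-v_0)s\ge h_0-h$ and $(v-v_0)s_0\le h_0-h$ yields $|v-v_0|\le (c-b)/\alpha$ in one stroke and avoids the boundary limit entirely — a cleaner execution of the same idea.
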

\begin{proof}
We first show that the derivatives are uniformly bounded above. Note that for a single-variable function
$f$ its subdifferential at $t\in(\alpha,\infty)$ is the interval
$\partial f(t)=[f'(t^-),f'(t^+)]$, where $f'(t^-)$ and $f'(t^+)$ denote the left and right derivatives of $f$ at $t$ respectively.
As in the multi-variable case, the subdifferential of $f$ is characterized
by the property that $v\in\partial f(t)$ if and only if $f(t)+v(s-t)\le f(s)$ for all $s\in(\alpha,\infty)$.
Given $t\in(\alpha,\infty)$ and $v\in \partial f(t)$, the \emph{intercept} of the sub-tangent line
with slope $v$ is $f(t)-vt$, which is the intercept of the sub-tangent line $\ell(s)=f(t)+v(s-t)$ with the vertical axis.

Fix $\beta>\alpha$. Let $t\in(\alpha,\infty)$ be arbitrary and let $v\in\partial f(t)$.
Let $\iota\in [b,c]$ be the corresponding intercept.
Then $f(\beta)\ge \iota+v\beta\ge b+v\beta$.
It follows that $v\le (f(\beta)-b)/\beta$, giving a uniform upper bound on derivatives of $f$. Moreover, since $t$ was arbitrary we see that
$$
f(\beta)\ge b+\beta\sup_{t\in(\alpha,\infty)}\partial f(t)
$$
and this inequality holds for all $\beta>\alpha$.

Denote $\bar v=\sup_{t\in(\alpha,\infty)}\partial f(t)$ and assume that $b$ is the greatest lower bound of the vertical intercepts. We claim that $f(t)-(\bar v t+b)$ approaches 0.
Let $\epsilon>0$ and let $t_0$ be such that there is a supporting line $vt+\iota$ to $f(t)$,
touching at $t_0$ and satisfying $\iota<b+\epsilon$.
Then $b+\epsilon>\iota=f(t_0)-v t_0$ so that $f(t_0)\le v t_0+b+\epsilon\le \bar v t_0+b+\epsilon$. By the convexity of $f$, the supporting line touching at any $t>t_0$ has its vertical axis intercept below or equal to $\iota$ and hence the same inequality holds for all $t\ge t_0$.

For a lower bound on the derivatives, consider a sub-tangent line at $t$ with slope $v$ and
intercept $\iota$, so that $f(t)=\iota+vt$. In particular, we see $v\ge (f(t)-c)/t$.
Since $\partial f(s)\le \partial f(t)$ whenever $s\le t$, it suffices to show that
$\lim_{s\to\alpha^+}\partial f(s)$ is finite.
By convexity, $f(\alpha^+)$ exists and is at least $f(\beta)-u(\beta-\alpha)$ where
$u\in\partial f(\beta)$, so that $f(\alpha^+)\in(-\infty,\infty]$. Hence the
inequality above shows that $\partial f(\alpha^+)\ge (f(\alpha^+)-c)/\alpha$
giving the required lower bound.
\end{proof}

\begin{cor}\label{cor:main_1D}
Let $\alpha>0$ and let $f(t)$ be a convex function on $(\alpha,\infty)$ such that the support lines to $f$ at
each $t\in(\alpha,\infty)$ have vertical axis intercepts in a closed interval $[b,c]\subset [0,\infty)$.
Then there exists a full shift on a finite alphabet and a continuous potential $\phi$ such that
$P_{\rm top}(t\phi)=f(t)$ for all $t\in (\alpha,\infty)$.
\end{cor}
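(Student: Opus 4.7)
The plan is to deduce the corollary as a direct specialization of Theorem \ref{thm:main} to the one-variable case $m=1$, once the Lipschitz gap between the two statements is closed. The corollary assumes only that $f$ is convex on $(\alpha,\infty)$ with support-line intercepts in a bounded interval $[b,c]\subset[0,\infty)$, whereas Theorem \ref{thm:main} also demands that its input function be Lipschitz. Lemma \ref{lem:main_1D}, which was just established, fills exactly this gap: for a one-variable convex function, boundedness of the vertical-axis intercepts of support lines already forces the Lipschitz property.

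So the proof reduces to two lines. First, invoke Lemma \ref{lem:main_1D} to conclude that $f$ is Lipschitz on $(\alpha,\infty)$. Second, apply Theorem \ref{thm:main} with $m=1$ and $F=f$; this produces a full shift on a finite alphabet and a continuous potential $\phi$ (the $\phi_1$ of the theorem) satisfying $P_{\rm top}(t\phi)=f(t)$ for every $t\in(\alpha,\infty)$, which is exactly the conclusion of the corollary.

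There is no substantive obstacle at this stage, since all of the real work--the product construction of $\beta$-shifts with Sturmian shifts, the pinning-sequence technique, and the control of measures lying outside of $Z$--has already been carried out in Section \ref{sec:main}. The one point worth remarking on is that in the one-parameter setting the hypothesis of bounded intercepts automatically supplies both the Lipschitz condition and the existence of a slant asymptote at infinity, so neither needs to be listed separately among the assumptions of the corollary; this is precisely the content of Lemma \ref{lem:main_1D} and the observation preceding it.
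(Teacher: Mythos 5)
Your proposal is correct and matches the paper's own proof exactly: Lemma \ref{lem:main_1D} supplies the Lipschitz property, after which Theorem \ref{thm:main} with $m=1$ gives the conclusion. Nothing further is needed.
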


\begin{proof}
  Note that $f(t)$ is Lipschitz by the above lemma and then apply Theorem \ref{thm:main} with $m=1$.
\end{proof}

We point out that contrary to the one-parameter situation, the multi-parameter pressure function is no
longer asymptotically linear. Furthermore, in higher dimensions the fact that the intercepts are bounded
does not imply that the convex function is Lipschitz. We give an example to illustrate this.

\begin{example}
This is an example of a convex non-Lipschitz function $F$ such that the set of vertical-axis intercepts of all
the support planes to the graph of $F$ is bounded.
   \end{example}
\begin{proof}
For $(t_1,t_2)\in (0,\infty)^2$, set
$$
F(t_1,t_2)=\sup_{s\in[0,\infty)} (t_1s-t_2s^2).
$$
Notice that for a fixed $s$, $t_1s-t_2s^2$ is a linear function of $(t_1,t_2)$ so that $F$ is convex.
For fixed $(t_1,t_2)\in(0,\infty)^2$, $$t_1s-t_2s^2=\frac{t_1^2}{4t_2}-t_2\left(s-\frac{t_1}{2t_2}\right)^2,$$
so that $F(t_1,t_2)=\frac{t_1^2}{4t_2}$. Since $F(t_1,t_2)$ is the supremum of a collection of linear functions of $(t_1,t_2)$,
it is convex. Since each of the linear functions in the collection has intercept 0, the collection
of intercepts is bounded. However $F$ clearly fails to be Lipschitz.
\end{proof}

We finish this section with an application of our result to describe feasible occurrences of first-order
phase transitions. Let $\alpha>0$ and let $(z_j)$ be an arbitrary (possibly finite) sequence
of terms in $(\alpha,\infty)$. Let $S=\{z_j: j=1,2,...\}$. We define a function $\phi$ as follows.
First let $g\colon (\alpha,\infty)\to\R$ be given by
$$
g(s)=\sum_{\{j\colon z_j\le s\}}\frac{\alpha}{2^jz_j^2}.
$$
Then define
$$
f(t)=3+\int_0^t g(s)\,ds.
$$
Notice that since $\alpha/(2^jz_j^2)$ are summable, $g$ is continuous everywhere except
on $S$, where it jumps upwards, guaranteeing that $f$ is differentiable precisely on
$(\alpha,\infty)\setminus S$.

We claim that $f$ satisfies the hypotheses of Corollary \ref{cor:main_1D}. The vertical axis intercept of the
support line at $t\in(\alpha,\infty)$ (or the support line with the largest gradient if $t\in S$)
is given by
\begin{align*}
f(t)-tg(t)&=3-\int_0^t \big(g(t)-g(s)\big)\,ds\\
&=3-\int_0^t \sum_{\{j\colon s<z_j\le t\}}\frac \alpha{2^jz_j^2}\,ds\\
&\ge 3-\int_0^t \sum_j \frac{\alpha}{2^j\max(\alpha,s)^2}\,ds\\
&=3-\int_0^t \frac{\alpha}{\max(\alpha,s)^2}\,ds\\
&=3-\int_0^\alpha \frac 1\alpha\,ds-\int_\alpha^t\frac{\alpha}{s^2}\,ds\ge 1.
\end{align*}
The upper bound $f(t)-tg(t)\le3$ follows from the first equality of the above display equation since $g(s)$ is non-decreasing.

The theorem shows that we are able to construct a potential $\phi$ whose pressure function
has an arbitrary countable collection of first order phase transitions.

\section{Cardinality of equilibrium states}\label{sec:Cardinality}

In this section, we briefly outline a strategy for showing that not only is one free to specify
the pressure function, but there is also a lot of freedom in controlling the cardinality of
the set of ergodic equilibrium states. In particular, we prove Theorem \ref{thm:main+card}.

In the proof of Theorem \ref{thm:main}, for each function $F\colon (\alpha,\infty)^m\to\R$
satisfying the conditions of the theorem, we constructed
a full shift and a family of potentials $(\phi_i)_{i=1}^m$ such that
$F(t_1,\ldots,t_m)=P(t_1\phi_1+\ldots+t_m\phi_m)$ for each $\mathbf t\in (\alpha,\infty)^m$.
It is natural to ask about the cardinality of the set of equilibrium states for
these $t_1\phi_1+\ldots+t_m\phi_m$. The proof establishes that the ergodic equilibrium states for
$t_1\phi_1+\ldots+t_m\phi_m$ are precisely the measures of maximal entropy supported on
the $Z_\bgamma$ such that $\bgamma\in\partial F(\mathbf t)$.

For instance, if $\mathbf t$ is not a point
of differentiability for $F$, then there are multiple (uncountably many) ergodic equilibrium states
for $t_1\phi_1+\ldots+t_m\phi_m$. For $\mathbf t$ that are differentiability points of $F$,
there is exactly one element $\bgamma$ of $\partial F(\mathbf t)$. However, the space $Z_\bgamma=
X_{e^{\gamma_0}}\times Y_{\gamma_0}\times \ldots\times Y_{\gamma_m}$ may still support
uncountably many measures of maximal entropy if there is a rational relationship between
$\gamma_0,\gamma_1,\ldots,\gamma_m$ (more specifically, if there exists a non-trivial integer
combination of irrational $\gamma_j$'s taking an integer value).

For one-parameter pressure functions we can modify our construction slightly and obtain uniqueness of the
equilibrium states everywhere except for the points of non-differentiability. The main difference is that
instead of parameterizing
by the supporting hyperplanes, $S$, we can parameterize simply by the intercept of the support line with the
vertical axis. It is necessary to use a single Sturmian component
rather than the two Sturmian components in order to avoid possible rational dependencies between
gammas as described above.

When $m=1$ in Theorem \ref{thm:main} we have $Z_{\bgamma}=X_{e^{\gamma_0}}\times Y_{\gamma_0}\times Y_{\gamma_1}$.
We express $\gamma_1$ as a function of $\gamma_0$, which makes the factor $Y_{\gamma_1}$ redundant.
Recall from Section \ref{sec:beta-shifts} that $X_{e^{\gamma_0}}$ has a unique measure
of maximal entropy which is weak-mixing and Bernoulli.
Also $Y_{\gamma_0}$ is uniquely ergodic, so that $X_{e^{\gamma_0}}\times Y_{\gamma_0}$ supports a
unique measure of maximal entropy \cite{Fur}.
This measure is then the only equilibrium state of $t\phi$ in the case when the pressure function is
differentiable at $t$ and $\gamma_0$ is the vertical intercept of its tangent line at $t$.

To make the above precise, we fix a convex function $f(t)$ on $(\alpha,\infty)$ such that the support lines to $f$ at
each $t\in(\alpha,\infty)$ have vertical axis intercepts in a closed interval $[b,c]\subset [0,\infty)$.
For $\gamma\in [b,c]$ we define the function
$$
s(\gamma)=\sup\{\text{v}\colon \gamma+t\text{v}\le f(t)\text{ for $t\in(\alpha,\infty)$}\}.
$$
We show that the function $s(\gamma)$ is non-increasing and Lipschitz with Lipschitz constant $\frac 1\alpha$.
Let $\gamma<\gamma'$ and let $\tv$ be such that $\gamma'+t\tv\le f(t)$ for all $t\in(\alpha,\infty)$,
then $\gamma+t\tv\le f(t)$ for all $t\in(\alpha,\infty)$, so that $s(\gamma)\ge s(\gamma')$.
Next, observe that if $\gamma+t\tv\le f(t)$ for all $t\in(\alpha,\infty)$, then $\gamma'+t(\tv-\frac{\gamma'-\gamma}{\alpha})
\le \gamma+t\tv\le f(t)$ for all $t\in(\alpha,\infty)$, so that $s(\gamma')\ge \tv-\frac{\gamma'-\gamma}\alpha$ and
$|s(\gamma')-s(\gamma)|\le \frac{|\gamma'-\gamma|}\alpha$ as required.

It is easy to verify that for each $t\in (\alpha,\infty)$
$$
f(t)=\sup_{\gamma\in [b,c]}(\gamma+s(\gamma)t).
$$
Hence, we let the alphabet $A=\{0,1,...,\lfloor e^c\rfloor\}\times\{\lfloor b\rfloor,...,\lceil c\rceil\}$ and for $z\in A^{\Z}$ define
$$
\phi_\gamma(z)=s(\gamma)-\delta_{j_\gamma(z)}\quad\text{and}\quad \phi(z)=\sup_{\gamma\in [b,c]}\phi_\gamma(z),
$$
where $j_\gamma(z)$ and $\delta_{j_\gamma(z)}$ are as in (\ref{eq:delta_j}) with $Z_\gamma=X_{e^\gamma}\times Y_\gamma$.
The uniform equicontinuity of the family $\{\phi_\gamma\colon \gamma\in[b,c]\}$ ensures that $\phi$ is continuous.

We still need to confirm that $\phi(z)=s(\gamma)$ whenever $z=(x,y)\in Z_\gamma$. Since $s(\gamma)$
is non-increasing, $\phi_{\gamma'}\le s(\gamma)$ for $\gamma'\ge\gamma$. For $\gamma'<\gamma$ we
choose $j=\lceil 1/(\gamma-\gamma')\rceil$ and by looking at the weight of the word $y_{-j}...y_{j-1}$ conclude
that it is not in $\mathcal L_{2j}(Y_\gamma')$. It follows that $\phi_{\gamma'}(z)\le s(\gamma')-\delta_j$.
Since $j\le \frac{1+c}{\gamma-\gamma'}$ and $s(\gamma)$ is Lipschitz with constant $\frac{1}{\alpha}$, we see that
$$
\phi_{\gamma'}(z)\le s(\gamma')-\delta_j\le s(\gamma')-\frac{1+c}{\alpha j}\le s(\gamma')-\frac{\gamma-\gamma'}{\alpha}\le s(\gamma).
$$

The rest is a verbatim repetition of the proof of Theorem \ref{thm:main} with $\gamma=\gamma_0$ and $m=0$.
The only minor adjustment is in Lemma \ref{lem:int_est}, where the integral estimate becomes
$$
\int \bar\phi^P\,d\bar\mu_P \le \tfrac3\alpha+\sum_{j,n} jr_{j,n}s\big(\tfrac nj\big)-\sum_jjq_j\delta_j.
$$
The reason is that the value of the potential $\phi$ on each $R_{j,n}$ is approximately $s\big(\tfrac nj\big)$,
which follows from Lemma \ref{lem:gammaest} and the fact that $s(\gamma)$ is Lipschitz.

We have established the initial construction where the potential $t\phi$ has a unique equilibrium state for each
$t$ where $f(t)$ is differentiable. Now we are in position to add equilibrium states to $t\phi$ at various points
$t\in (\alpha,\infty)$ as we see fit. The key idea is to replace the sets $Z_\gamma$ which support the
equilibrium states for $t\phi$ by
$$
Z_\gamma=X_{e^\gamma}\times Y_\gamma\times D_\gamma
$$
where $D_\gamma$ is a \emph{decoration factor}. We now describe a set of desired properties of the family $\{D_\gamma\}$ and also outline one possible construction of such a family. Suppose for $t\in (\alpha,\infty)$ we would like the potential
$t\phi$ to have precisely $N(t)$ ergodic equilibrium states. Then we impose the following conditions on $\{D_\gamma:\gamma\in [b,c]\}$:
\begin{enumerate}[(i)]
\item \label{it:nmeas}
for $\gamma\in \partial f(t)$ the subshift $Z_\gamma$ supports exactly $N(t)$ ergodic measures of maximal entropy;
\item\label{it:zeroent}
$h_\text{top}(\text{Cl}(\bigcup_{\gamma\in [b,c]} D_\gamma))=0$;
\item\label{it:invmeas}
For any $\gamma\in (b,c)$, any invariant measure supported on the set
$\bigcap_{\varepsilon>0}\text{Cl}\big(\bigcup_{\gamma'\in (\gamma-\varepsilon,\gamma+\varepsilon)} Z_{\gamma'}\big)$ is
supported on $Z_\gamma$.
\end{enumerate}
Condition \ref{it:zeroent} gives an additional term in Lemma \ref{lem:entropy_est},
which has to be compensated for in the definition of $\delta_j$. The fact that the additional factor
has zero topological entropy ensures that $\delta_j$ still converges to 0.
Condition \ref{it:invmeas} is a mild extension of Lemma \ref{lem:measures on Z} to this context. 

Theorem \ref{thm:main+card} is an application of our technique, which illustrates the flexibility of cardinalities of
equilibrium measures. Note that the first implication of the statement of the theorem follows from Corollary~\ref{cor:main_1D}.
To prove the second implication, we parameterize by the intercept of the tangent line with the
vertical axis as outline above: for each intercept, $\gamma$, the line
$\gamma+s(\gamma) t$ is tangent to $f(t)$. We let the point of tangency be $\tau(\gamma)$. The function
$\tau$ is a homeomorphism from $(b,c)$, the interior of the set of intercepts, to $(\alpha,\infty)$.

We use the following choices:
$$
D_\gamma=\begin{cases}
\{\bar i\colon 1\le i\le N(\tau(\gamma))\}
&\text{if $N(\tau(\gamma))$ is finite;}\\
\text{Cl}\left(\bigcup_{t\in [1,\ell]}Y_t\right)
&\text{if $N(\tau(\gamma))=\infty$,}
\end{cases}
$$
where $\bar i$ denotes the fixed point $\ldots iii\cdot iii\ldots$ of the full shift on $\ell$ symbols.

As we pointed out before, $X_{e^\gamma}\times Y_\gamma$ supports a unique measure of maximal entropy.
If $N(\tau(\gamma))=k$, then $D_\gamma$ consists of $k$ fixed points, so that it is evident that $Z_\gamma$ supports exactly
$k$ ergodic measures of maximal entropy. If $N(\tau(\gamma))=\infty$, then $D_\gamma$ supports uncountably
many ergodic measures of maximal entropy, and so does $Z_\gamma$. This establishes condition \ref{it:nmeas}.

Condition \ref{it:zeroent} follows from a
theorem of Mignosi \cite{Mignosi}; or from Lemma \ref{lem:Sturm}. (Lemma
\ref{lem:Sturm} implies that the number of words of length $n$ is at most $(\ell n+1)n(n+1)$).

To establish  condition \ref{it:invmeas}, notice that the upper semi-continuity of $N(t)$ ensures
that for all $\gamma$,
\begin{equation}\label{eq:suppusc}
\bigcap_{\varepsilon>0}\text{Cl}\Big(\bigcup_{|\gamma'-\gamma|<\epsilon}D_{\gamma'}\Big)=D_\gamma.
\end{equation}
In particular, if $\mu$ is an ergodic measure supported on $\bar Z:=\text{Cl}\big(\bigcup Z_\gamma\big)$,
by Lemma \ref{lem:measures on Z}, the projection of $\mu$ on its first two factors
is supported on some $X_{e^\gamma}\times Y_\gamma$. By \eqref{eq:suppusc}, the only points in
$\bar Z$ projecting to $X_{e^\gamma}\times Y_\gamma$ are points in $Z_\gamma$. This completes the proof of Theorem \ref{thm:main+card}.

\section{Future Directions}\label{FutureDirections}
In this brief section, we take the opportunity to state a number of related questions that have arisen in the course of this investigation.
In our main theorem, we have identified necessary and sufficient conditions under which a function
$F\colon (\alpha,\infty)^m\to\R$ may be represented in the form $P_\text{top}(t_1\phi_1+\ldots+t_m\phi_m)$
for continuous functions $\phi_1,\ldots,\phi_m$ defined on a full shift.
A very natural question, suggested by an anonymous referee, is whether necessary and sufficient conditions
for such a representation can be identified for functions $F\colon \R^m\to\R$. As was mentioned in Remark \ref{rem:full shift}, an additional restriction on the value of $F$ at the origin is needed in this case and this value determines the full shift. Hence, the corresponding question in the spirit of the flexibility program is the following.

\begin{q}
Let $F$ be a Lipschitz convex function on $\R^m$ whose supporting hyperplane intercepts lie
in a bounded sub-interval of $\R_{\ge 0}$ such that $F(0,\ldots,0)=\log d$ for some integer $d\ge 2$.
Do there exist continuous functions $\phi_1,\ldots,\phi_m$ on the full shift on $d$-symbols  such that
$F(t_1,\ldots,t_m)=P(t_1\phi_1+\ldots+t_m\phi_m)$ for all $(t_1,\ldots,t_m)\in\R^m$?
\end{q}

A theorem of Israel \cite{Israel} shows that for any full shift and any subset $K$ of the set of the shift-invariant Borel probability
measures that is the weak$^*$-closure of the linear span of a non-empty collection of ergodic measures, there
exists a potential $\phi$ whose equilibrium states are precisely $K$. One may ask 
 about extensions of that theorem
in the spirit of the results in the current paper. A possible question in this direction would be the following which concerns
not only what are the possible pressure functions, but what are the possible pressure functions and associated
equilibrium states.

\begin{q}
Let $0<t_1<\ldots<t_n$ and let $P_1,\ldots,P_n$ be such that there is a convex function passing through
$(t_j,P_j)_{j=1}^n$. Let $K_1,\ldots,K_n$ be disjoint weak$^*$-closures of linear spans of collections of
ergodic invariant measures. Does there exist a continuous potential $\phi$ such that
$P(t_j\phi)=P_j$ for $j=1,\ldots,n$ and such that the equilibrium states of $t_j\phi$ are precisely $K_j$?
\end{q}

This paper has focused on the case where the potentials are required to be continuous, but with no stronger conditions.
Much of the development in thermodynamic formalism has concerned H\"older continuous functions.
It is therefore natural to ask for the analogous picture when the potentials are required to be H\"older continuous.

\begin{q}
Is it possible to give a nice characterization of those functions $F\colon\R^m\to\R$ that arise
as $P_\text{top}(t_1\phi_1+\ldots+t_m\phi_m)$ where $\phi_1,\ldots,\phi_m$ are required
to be H\"older continuous?
\end{q}

In her work on the flexibility program, Erchenko \cite{E2} has raised the question of which functions $F(t)$ arise as
$P_\text{top}(t\phi;T)$ where $T$ is a given Anosov diffeomorphism of $\mathbb T^2$
and $\phi$ is the geometric potential, that is
$\phi(x)=-\log |D_uT(x)|$ where $|D_uT(x)|$ is the derivative of $T$ in the unstable
direction. A possible approach to this question is to model the Anosov diffeomorphism
by a shift of finite type and address the two questions: (i) what are the possible pressure
functions corresponding to H\"older continuous functions on the shift of finite type; (ii)
of these pressure functions, which ones arise as the pressure function of a geometric
potential?


\begin{thebibliography}{99}

\bibitem{Abramov59} L. M. Abramov, \emph{On the entropy of a flow}, Dokl. Akad. Nauk SSSR 128, 873--875 (1959).

\bibitem{AKU} A. Abrams, S. Katok, and I. Ugarcovici, \emph{Flexibility of measure-theoretic entropy of boundary
maps associated to Fuchsian groups}, to appear in ETDS.

\bibitem{AMP} L. Alsed\`{a}, M. Misiurewicz, and R. P\'{e}rez, \emph{Flexibility of entropies for piecewise
expanding unimodal maps}, preprint arXiv:2004.01813.

\bibitem{Antonioli} J. Antonioli, \emph{Compensation functions for factors of shifts of finite type},
Ergodic Theory Dynam. Systems {\bf 36} (2016), 375-389.

\bibitem{BG}
L. Barreira and K. Gelfert, \emph{Dimension estimates in smooth dynamics: a survey of recent results}, Ergodic Theory
and Dynamical Systems {\bf 31} (2011), 641–-671.

\bibitem{BKW} S. Banerjee, P. Kunde, and D. Wei, \emph{Slow entropy of some combinatorial constructions}, preprint arXiv:2010.14472.

\bibitem{BLL} A. Baraviera, R. Leplaideur and A. Lopes, \emph{The potential point of view for Renormalization}, Stoch. and Dynamics, Vol 12. N 4 (2012), 1–34,.

\bibitem{BSS} L. Barreira, B. Saussol and J. Schmeling \emph{Higher-dimensional multifractal
analysis}, J. Math. Pures Appl. {\bf 9} (2002), 67–-91.

\bibitem{BPS} L. Barreira, Ya. Pesin, and J. Schmeling,  \emph{On a general concept of multifractality:
multifractal spectra for dimensions, entropies, and Lyapunov exponents.
Multifractal rigidity}, Chaos, {\bf 7(1)} (1997), 27–38.

\bibitem{BKHR} J. Bochi, A. Katok, and F. Rodriguez Hertz, \emph{Flexibility of Lyapunov exponents}, preprint arXiv:1908.07891.

\bibitem{Bosh} M. Boshernitzan, \emph{A unique ergodicity of minimal symbolic flows with linear block
growth}, Journal d’Analyse Mathématique, {\bf 44} (1984), 77–96.

\bibitem{Bo} R. Bowen, \emph{Equilibrium States and the Ergodic Theory of Anosov Diffeomorphisms}, Lecture Notes in Mathematics {\bf 470}, Springer, 1975.

\bibitem{BBC} J. Berstel, L. Boasson, and O. Carton, \emph{Hopcroft’s automaton minimization algorithm and Sturmian words},
Fifth Colloquium on Mathematics and Computer Science, DMTCS proc, (2008) 351-362.

\bibitem{BL} H. Bruin and R. Leplaideur, \emph{Renormalization, Freezing Phase Transitions and Fibonacci Quasicrystals}, Ann. Sci. Ec. Norm. Super. (4) 48 (2015), no. 3, 739–763.

\bibitem{CS} P. Carrasco and  R. Saghin, \emph{Extended flexibility of Lyapunov exponents for Anosov diffeomorphisms}, preprint arXiv:2101.07089v2.

\bibitem{CSc} G. Castiglione and M. Sciortino \emph{Standard Sturmian words and automata minimization algorithms}, Theoretical Computer Science, {\bf 601} (2015), 58-66.

\bibitem{C} V. Climenhaga, \emph{Topological pressure of simultaneous level sets},
Nonlinearity {\bf 26} (2013), 241-268.

\bibitem{CR} D. Coronel and J. Rivera-Letelier, \emph{High-order transitions in the quadratic family}, J. Eur. Math. Soc. 17 (2015), no. 11, 2725–2761.

\bibitem{CH} E. Coven and G. A. Hedlund, \emph{Sequences with minimal block growth}, Math. Systems
Theory {\bf 7} (1973), 138-153.

\bibitem{DGR} L. J. D\'{\i}az, K. Gelfert, and M. Rams, \emph{Rich phase transitions in step skew products}, Nonlinearity,
{\bf 24(12)} (2011), 3391-3412.

\bibitem{Dobbs} N. Dobbs, \emph{Renormalisation-induced phase transitions for unimodal maps},
Commun. Math. Phys. {\bf 286} (2009), 377-387.

\bibitem{Dob} R. L. Dobrushin, \emph{The existence of a phase transition in the two- and three-dimensional
Ising models}, Theory Probab. Appl. {\bf 10} (1965) 193-213.

\bibitem{E1} A. Erchenko, \emph{Flexibility of Lyapunov exponents for expanding circle maps}, Discrete Contin. Dyn. Syst., \textbf{ 39 (5)} (2019) 2325-2342.

\bibitem{E2} A. Erchenko, \emph{Flexibility of Lyapunov exponents with respect to two classes of measures on the torus}, to appear in Ergodic Theory and Dynamical Systems.

\bibitem{EK} A. Erchenko and A. Katok, \emph{Flexibility of entropies for surfaces of negative curvature}, Israel Journal of Mathematics, \textbf{232} (2019) 631--676.

\bibitem{FS} J. Fr\"{o}hlich and T. Spencer, \emph{The phase transition in the one-dimensional Ising model
with $1/r^2$ interaction energy} Commun. Math. Phys. {\bf 84} (1982), 87-101.

\bibitem{Fur} H. Furstenberg, \emph{Disjointness in ergodic theory, minimal sets, and a problem in Diophantine approximation}, Math. System Theory {\bf 1} (1967), 1-49.

\bibitem{Hed} G. A. Hedlund, \emph{Sturmian minimal sets}, Amer. J. Math. {\bf 66} (1944), 605-620.

\bibitem{HOSSS} P. Hieronymi, D. Ma, R. Oei, L. Schaeffer, C. Schulz, and J. Shallit \emph{Decidability for Sturmian words}, preprint arXiv:2102.08207.

\bibitem{H2} F. Hofbauer, \emph{Examples for the non-unuqueness of the equilibrium states}, Trans. Amer. Math. Soc. \textbf{228} (1977), 133-241.

\bibitem{H} F. Hofbauer, \emph{$\beta$-shifts have unique maximal measure}, Monatshefte Math. \textbf{85} (1978), 189-198.

\bibitem{Hofbauer} F. Hofbauer, \emph{On intrinsic ergodicity of piecewise monotonic transformations
wit positive entropy}, Israel J. Math. {\bf 34} (1979), 213--237

\bibitem{IoTo} G. Iommi, and M. Todd, \emph{Transience in dynamical systems}, Ergodic Theory Dynam. Systems, no. 5 (2013) 1450–1476.

\bibitem{Israel} R. Israel \emph{Convexity in the theory of lattice gases}, Princeton University Press, 1979.

\bibitem{IT} S. Ito and Y. Takahashi, \emph{Markov subshifts and realization of $\beta$-expansions}, J. Math. Soc. Japan, \textbf{26 N1} (1974), 33-55.

\bibitem{Kac} M. Kac \emph{Mathematical mechanisms of phase transitions}, Statistical Physics: Phase
Transitions and Superfluidity, Vol. 1, Chretien, M. Gross, E. P. Deser, S. (eds.). New York, Gordon and Breach, Science Publishers, (1968), 241-305.

\bibitem{Ke}G. Keller, \emph{Equilibrium states in ergodic theory},
London Mathematical Society Student Texts 42, 1998.

\bibitem{KQW} T. Kucherenko, A. Quas, and C. Wolf \emph{Multiple phase transitions on compact symbolic systems}, Adv. in Math. {\bf 385} (2021), 107768.

\bibitem{Lep}R. Leplaideur,
\emph{Chaos: butterflies also generate phase transitions,}
J. Stat. Phys. {\bf 161} (2015),  151--170.

\bibitem{Lo1} A. Lopes, \emph{The Dimension spectrum and a mathematical model for phase transition}, Adv. in Appl. Math., {\bf 11} No. 4 (1990), 475-502.

\bibitem{Lo2} A. Lopes, \emph{The first order level 2 phase transition in thermodynamic formalism}, J. Statist. Phys., {\bf 60} Nos 3/4 (1990), 395-411.

\bibitem{Lo3} A. Lopes, \emph{The Zeta Function, non-differentiability of the pressure, and the critical exponent of transition}, Adv. in Math., {\bf 101} (1993), 133-165.

\bibitem{Mignosi}F. Mignosi,
\emph{On the number of factors of Sturmian words},
Theor. Comp. Sci., {\bf 82} (1991), 71-84.

\bibitem{MH} M. Morse and G. Hedlund, \emph{Symbolic dynamics II. Sturmian trajectories}, Amer. J. Math. {\bf 62} (1940) 1–42.

\bibitem{Parry} W. Parry, \emph{On the $\beta$-expansions of real numbers},
Act Math. Acad. Sci. Hungar., {\bf 11} (1960), 401-416.

\bibitem{P} Y. Pesin, \emph{Dimension Theory in Dynamical Systems: Contemporary Views and Applications}, Chicago Lectures in Mathematics, Chicago University Press, Chicago, 1997.

\bibitem{PU} F.~Przytycki and M.~Urbanski, \emph{Conformal fractals: ergodic theory methods},
London Mathematical Society Lecture Note Series, 371. Cambridge University Press, Cambridge, 2010. x+354 pp.

\bibitem{Quas} A. Quas, \emph{Coupling and Splicing}, Unpublished lecture notes available at
\texttt{http:www.math.uvic.ca/faculty/aquas/CoupleSplice.pdf}.

\bibitem{Re} A. Renyi, \emph{Representations for real numbers and their ergodic properties}, Acta Math. Acad. Sci.
Hung. \textbf{8} (1957) 477-493.

\bibitem{RRS} S. Roth, Z. Roth, and L. Snoha \emph{Flexibility and Rigidity of Polynomial Entropy}, preprint

\bibitem{Ru1}D. Ruelle, \emph{Thermodynamic formalism: The mathematical structures of equilibrium statistical mechanics}, Second edition. Cambridge Mathematical Library. Cambridge University Press, Cambridge, 2004.

\bibitem{Ru2}D. Ruelle, \emph{Statistical mechanics of a one-dimensional lattice gas,}
Comm. Math. Phys. {\bf 9} (1968), 267--278.

\bibitem{Sinai} Ya. G. Sinai, \emph{Theory of Phase Transitions: Rigorous Results}, Pergamon, Oxford (1982).

\bibitem{ScZ} M. Sciortino and L.Q. Zamboni \emph{Suffix Automata and Standard Sturmian Words}, In: Harju T., Karhumäki J., Lepistö A. (eds) Developments in Language Theory. (2007) Lecture Notes in Computer Science {\bf 4588}, Springer, Berlin, Heidelberg

\bibitem{Smorodinsky} M. Smorodinsky, \emph{$\beta$-automorphisms are Bernoulli shifts}, Act Math. Acad. Sci. Hungar., {\bf 24} (1973), 273–278.

\bibitem{Walters78} P.~Walters, \emph{Equilibrium states for {$\beta$}-transformations and
related transformations}, \emph{Math. Z.} {\bf 159}, 65--88.

\bibitem{Walters} P.~Walters, \emph{An introduction to ergodic theory},
Graduate Texts in Mathematics 79, Springer, 1981.

\bibitem{W1} P. Walters, \emph{Differentiability Properties of the Pressure of a Continuous
Transformation on a Compact Metric Space}, J. Lond. Math. Soc. {\bf46} (1992), 471--481.

\end{thebibliography}
\end{document}